\numberwithin{equation}{section}
\newtheorem{theorem}{Theorem}[section]
\newtheorem{lemma}[theorem]{Lemma}
\newtheorem{proposition}[theorem]{Proposition}
\newtheorem{corollary}[theorem]{Corollary}
\theoremstyle{definition}
\newtheorem{definition}[theorem]{Definition}
\newtheorem{remark}[theorem]{Remark}
\theoremstyle{plain}
\newtheorem*{claim*}{Claim}
\newcommand{\R}{\mathbb{R}}
\newcommand{\Ups}{\Upsilon}
\newcommand{\E}{\mathcal{E}}
\newcommand{\vhi}{\varphi}
\newcommand{\eps}{\varepsilon}
\newcommand{\PI}{{\varPi}}
\newcommand{\om}{\omega}
\newcommand{\Om}{\Omega}
\newcommand{\lt}{\left}
\newcommand{\rt}{\right}
\newcommand{\st}{\stackrel}
\newcommand{\ds}{\displaystyle}
\newcommand{\restr} {
	\hskip2.5pt{\vrule height7pt width.5pt depth0pt}
	\hskip-.2pt\vbox{\hrule height.5pt width7pt depth0pt}
	\, }
\newcommand{\ie}{\textit{i.e.~}}
\DeclareMathOperator{\supp}{supp}
\DeclareMathOperator{\diam}{diam}
\title[An exterior optimal transport problem]{An exterior optimal transport problem}
\author[J. Candau-Tilh]{Jules Candau-Tilh}
\address{J. C-T.: Univ. Lille, CNRS, UMR 8524, Inria - Laboratoire Paul Painlev\'e, F-59000 Lille}
\email{jules.candautilh@univ-lille.fr}
\author[M. Goldman]{Michael Goldman}
\address{M.G.: CMAP, CNRS, \'Ecole polytechnique, Institut Polytechnique de Paris, 91120 Palaiseau,
	France}
\email{michael.goldman@cnrs.fr}
\author[B. Merlet]{Benoit Merlet}
\address{B.M.: Univ. Lille, CNRS, UMR 8524, Inria - Laboratoire Paul Painlev\'e, F-59000 Lille}
\email{benoit.merlet@univ-lille.fr}
\begin{document}
	\begin{abstract}
		This paper deals with a variant of the optimal transportation problem. Given $f \in L^1( \R^d, [0,1])$ and a cost function $c \in C(\R^d \times \R^d)$ of the form $c(x,y)=k(y-x)$, we minimise $ \smallint c \,d\gamma$ among transport plans $\gamma$ whose first marginal is $f$ and whose second marginal is not prescribed but constrained to be smaller than $1-f$. Denoting by $\Ups(f)$ the infimum of this problem, we then consider the maximisation problem $\sup \{\Ups(f) : \, \smallint f = m \}$ where $m > 0$ is given. We prove that maximisers exist under general assumptions on $k$, and that for $k$ radial,  increasing and coercive these maximisers are the characteristic functions of the balls of volume~$m$.\medskip
		
		\noindent
		\textbf{Keywords and phrases.} Optimal transport, dual problem, existence of maximisers.
		\medskip
		
		\noindent
		\textbf{2020 Mathematics Subject Classification.} 49Q22, 49Q20, 49J35.
	\end{abstract}
	
	\maketitle
	
	\section{Introduction}
	
	In this paper, we study the optimization problems associated with functionals which favour dispersion and are based on some Wasserstein energies. These functionals correspond to the non-local term of the energy studied in~\cite{CanGol,BCL2020,PeRo09,XiaZhou,NoToVenk}. Our main result is that for a very large class of radial costs, balls are the unique volume-constrained maximisers of these functionals. This confirms that they enter in strong competition with the perimeter for which balls are volume-constrained minimisers. \medskip
	
	We  denote by $\mathcal{M}_+(\R^d)$ the set of positive Radon measures on $\R^d$. Given a cost function $c$ and $\mu, \nu \in \mathcal{M}_+(\R^d)$, we let $\mathcal{T}_c(\mu, \nu)$ be the $c$-transport cost between $\mu$ and $\nu$ (see Section~\ref{StdOT} for the exact definition of $\mathcal{T}_c$).  Given a measurable set $E \subset \R^d$ with finite volume, we consider the optimisation problem
	\begin{equation}\label{Upsfunc}
		\Ups_{\mathrm{set}}(E) := \inf\lt\{ \mathcal{T}_c(E, F):F\subset \R^d\text{ Lebesgue measurable}, |F|=|E|, \, |F\cap E| = 0\rt\}
	\end{equation}
	where we identify $E$ with the restriction of the Lebesgue measure on $E$. Given $m >0$, we introduce the maximisation problem
	\begin{equation}\label{maxE}
		\E_{\mathrm{set}}(m) := \sup_{|E|=m} \Ups_{\mathrm{set}}(E).
	\end{equation}	
	The main goal of the article is to investigate the existence of maximisers for this problem and to characterise these latter.\\
	If we apply the direct method of the Calculus of Variations, we obtain that, up to extraction, any maximising sequence $E_n$ converges weakly  to some function $u_\infty\in L^1(\R^d, [0,1])$. However, there is no guarantee at this point that $u_\infty$ is a characteristic function or has mass $m$. Our strategy is to extend the functional $\Upsilon_{\mathrm{set}}$ as a functional $\Ups$ defined on $L^1(\R^d, [0,1])$. Applying the bathtub principle (see Proposition~\ref{prop_bathtub}) to a maximiser of the relaxed problem, we show that the supremum in~\eqref{maxE} is actually reached (see Corollary~\ref{coro_maxE}). This relaxation approach is not new: it was successfully applied to several variational problems in the last few years (see for instance~\cite{CiDeNoPo15, BonKnuRog, PegSanXia,BuChoTop}).
	
	\medskip 
	
	Given $f \in L^1(\R^d, \, [0,1])$, the set of admissible exterior transport plans is defined as
	\begin{equation*}
		\PI_f :=\lt\{\gamma\in\mathcal{M}_+(\R^d \times \R^d) : \gamma_x = f ,\,  \gamma_y \le 1-f\rt\}.
	\end{equation*}
	Here, the measures $f\,dx$ and $(1-f)\,dy$ are identified with their respective densities and $\gamma_x$ and $\gamma_y$ denote respectively the first and second marginals of $\gamma$. We then define the primal problem
	\[
		\Ups(f) :=\inf \lt\{\int c\,d\gamma : \gamma\in \PI_f\rt\}.
	\]
	We have $\Ups(\chi_E)=\Ups_{\mathrm{set}}(E)$ under mild assumptions on $c$ (see Theorem~\ref{thm_saturation}). Given $m>0$, our maximisation problem is now
	\begin{equation}\label{MP}
		\E(m):=\sup \lt\{ \Ups(f) : f \in L^1(\R^d, [0,1]), \,\int f\,dx=m\rt\}.
	\end{equation}
	By abuse of notation and when no confusion is possible, we refer to the variational problems by the values they attain (\textit{e.g.} we write $\Ups_{\mathrm{set}}(E)$ for~\eqref{Upsfunc}).
	
	\subsection{Main results}\hfill
	
	The first important result of this article is that maximisers of $\E(m)$ exist whenever $c$ is of the form $c(x,y) = k(y-x)$ for some $k: \R^d \to \R_+$ and satisfies
	
	\begin{enumerate}[(H1)]
		\item\label{cont}  $ k \in C(\R^d, \R_+)$, $k(0) = 0$ and $k(x) \to \infty$ as $|x| \to \infty$,
		\item\label{cone} $\forall x \not = 0$, 
		\[ \limsup_{r \to 0} \frac{1}{ r^d} \big|B_r(x) \cap \{y \in \R^d, \, k(y) < k(x)\} \big| > 0,
		\]
		\item\label{monot}  $\forall \, \sigma \in \mathbb{S}^{d-1}$, $r \mapsto k(r\sigma)$ is increasing on $\R_+$.
	\end{enumerate}
	Notice that $k$ is not assumed to be strictly convex, so that our results hold in cases where the existence of an optimal transport map is not guaranteed. Also observe that all the costs of the form $k(z) = |z|^p$ with $0 < p < \infty$ satisfy the above hypotheses. However, radial symmetry is not required and the costs $k(z)= |z|^p \,h(z/|z|)$ with $h$ positive and Lipschitz continuous on $\mathbb{S}^{d-1}$ are also admissible.
	\begin{theorem}\label{thm_MP_max}
		Assume that $c(x,y) = k(y-x)$ for $x,y\in \R^d$ with $k$ satisfying~(H\ref{cont}),(H\ref{cone})\&(H\ref{monot}). Then, for any $m>0$ the supremum in $\E(m)$ is attained. Moreover, there exists $R_* = R_*(m)$ such that (up to translation) any maximiser is supported in the ball $\overline B_{R_*}$.
	\end{theorem}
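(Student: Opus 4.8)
The plan is to run the direct method, the only obstruction being that $K_m:=\{f\in L^1(\R^d,[0,1]):\int f=m\}$ is not weak-$*$ compact (mass is free to escape to infinity or to spread out). Everything hinges on a scaling inequality for $\E$ and a decoupling property of $\Ups$, so I would first record three soft facts. (i) $\E(m)<\infty$: given $f\in K_m$, averaging $v\mapsto\int f(x)f(x-v)\,dx$ over $v\in B_\varrho$ produces some $v$ with $|v|\lesssim m^{1/d}$ and $\int f(\cdot-v)f$ as small as desired; transporting $f$ by $v$, capping the target at $1-f$ (losing only $\int (f+f(\cdot-v)-1)_+\le\int f(\cdot-v)f$ of the mass) and re-routing that excess onto vacancies — which, since $1-f$ has density $\le 1$ and infinite total mass, can be found within distance $O(m^{1/d})$ of any point — gives $\Ups(f)\le C\,m\,k(Cm^{1/d})$; in particular $\E(t)/t\to 0$ as $t\to 0$. (ii) $\E(m)>0$: by (H\ref{monot}) $k>0$ off the origin, so a zero-cost plan is supported on the diagonal, forcing $\gamma_y=f$, which is incompatible with $\gamma_y\le 1-f$ for a characteristic function; hence $\Ups(\chi_E)>0$ whenever $|E|>0$ and $\E(m)\ge\Ups(\chi_{B_{r(m)}})>0$. (iii) $\Ups$ is weak-$*$ upper semicontinuous on subsets of $K_m$ with uniformly bounded supports: given $f_n\rightharpoonup f$, transfer a near-optimal plan for $f$ to $f_n$ by disintegrating it along its first marginal, truncating long jumps, and repairing the resulting violation of the constraint $\gamma_y\le 1-f_n$ by re-routing onto nearby vacancies as in (i).

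The engine is the \emph{dilation inequality}. For $\lambda\ge 1$ and $f\in K_{m'}$, writing $f_\lambda:=f(\cdot/\lambda)\in K_{\lambda^d m'}$, rescaling transport plans gives $\Ups(f_\lambda)=\lambda^d\inf_{\gamma\in\PI_f}\int k(\lambda(y-x))\,d\gamma$; since $k(\lambda z)\ge k(z)$ with strict inequality for $z\ne 0$ by (H\ref{monot}), and since $\Ups(f)>0$ prevents the optimal plan from being the diagonal coupling, one gets $\Ups(f_\lambda)>\lambda^d\Ups(f)$. Taking suprema yields $\E(\lambda^d m')\ge\lambda^d\E(m')$, so $m\mapsto\E(m)/m$ is nondecreasing. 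Upgrading this to \emph{strict} monotonicity — equivalently, strict super-additivity $\E(m_1+m_2)>\E(m_1)+\E(m_2)$, which, as the "two far-apart blobs" competitor shows, is exactly what is needed — requires quantifying the gap $\int(k(\lambda(y-x))-k(y-x))\,d\gamma$ from below; the key point is that $\gamma_y\le 1-f$ forces the mass sitting where $f>\tfrac12$ to be transported away, hence a definite fraction of it a definite distance. Complementing this is a \emph{decoupling lemma}: if $f=f_1+f_2$ with $f_1f_2=0$ and $\mathrm{dist}(\supp f_1,\supp f_2)\ge L(m)$ with $L(m)$ large, then $\Ups(f)=\Ups(f_1)+\Ups(f_2)$ — "$\ge$" by splitting an optimal plan according to the source, and "$\le$" because coercivity (H\ref{cont}) keeps all but a vanishing fraction of transported mass within a bounded range, so for $L(m)$ large the optimal plans for $f_1$ and $f_2$ can be repaired by small re-routings and juxtaposed.

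Existence then follows a concentration–compactness scheme applied to a maximising sequence $(f_n)$: vanishing ($\sup_x\int_{B_1(x)}f_n\to 0$) is impossible since low local density lets $f_n$ be moved onto its own vacancies at vanishing cost, forcing $\Ups(f_n)\to 0<\E(m)$; dichotomy (splitting into receding lumps of masses $m_1,m_2>0$) is impossible by the decoupling lemma together with strict super-additivity, $\E(m)=\lim\Ups(f_n)\le\E(m_1)+\E(m_2)<\E(m)$. Hence, after a translation, $(f_n)$ is tight and converges weak-$*$ to some $f_\infty\in K_m$, and (iii) gives $\Ups(f_\infty)=\E(m)$. For the confinement, let $f$ be \emph{any} maximiser and suppose $\supp f$ were unbounded. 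For $R$ large the tail mass $\int_{\overline B_R^c}f$ is small; choosing, inside $[R,R+NL(m)]$, a width-$L(m)$ annulus of tiny mass to cleanly separate the bulk $g$ from the tail $h$, decoupling gives $\E(m)=\Ups(f)=\Ups(g)+\Ups(h)+o(1)\le\Ups(g)+\E\big(\!\int h\big)+o(1)$, while dilating $g$ back up to total mass $m$ (factor $\lambda=(m/\!\int g)^{1/d}$ close to $1$) strictly increases $\Ups$, so $\E(m)\ge\Ups(g_\lambda)>\lambda^d\Ups(g)$. Combining these and letting $R\to\infty$ forces $\E(t)/t\ge\E(m)/m-o(1)$ with $t=\int h\to 0$, contradicting $\E(t)/t\to 0$ from (i); since all constants depend only on $m$ (and maximisers, by strict super-additivity applied to a sequence of maximisers, cannot spread), this gives a uniform $R_*=R_*(m)$. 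Finally, Proposition~\ref{prop_bathtub} and Corollary~\ref{coro_maxE} refine the maximiser to a characteristic function, but that is not needed for the present statement.

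The two steps I expect to be genuinely hard are: (a) the weak-$*$ upper semicontinuity in (iii), because weak convergence gives no control on $\int(f_n-f)_+$, so neither the equality $\gamma_x=f_n$ nor the inequality $\gamma_y\le 1-f_n$ can be restored by a naive perturbation of a plan built for $f$ and one must carefully exploit the abundance of vacancies; and (b) promoting the dilation inequality to a strict, quantitatively controlled super-additivity of $\E$, on which the exclusion of dichotomy — and hence existence itself — rests. I expect (b) to be the crux, since it is the precise point at which "no maximiser splits into two pieces" has to be turned into a genuine estimate.
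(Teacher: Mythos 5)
Your overall architecture (direct method driven by strict superadditivity of $\E$, a decoupling identity for far-apart supports, and exclusion of vanishing by cheap local transport) matches the paper's, and your ``engine'' is essentially Proposition~\ref{prop_mu_inc} together with Proposition~\ref{prop_basic_ups}~(i)--(ii) and Lemma~\ref{lem_concentration}. But the step you yourself flag as hard, the upper semicontinuity (iii), is a genuine gap, and the method you propose for it does not work. If $\gamma$ is optimal for $f$ and you disintegrate $d\gamma(x,y)=d\gamma^x(y)\,f(x)\,dx$ and reweight by $f_n$, the cost of the new plan is $\smallint\lt(\smallint c(x,y)\,d\gamma^x(y)\rt)f_n(x)\,dx$, and $x\mapsto\smallint c\,d\gamma^x$ is only measurable, so weak-$*$ convergence $f_n\st*\rightharpoonup f$ gives no convergence of this quantity; moreover the second marginal of the reweighted plan violates $\gamma_y\le 1-f_n$ on sets where $f_n$ locally exceeds $f$, and repairing this at controlled cost requires exactly the control on $(f_n-f)_+$ that weak convergence withholds. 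The paper closes this step by a different idea entirely: duality. The normalised potentials form a compact subset $\Phi'$ of $(C(\overline B_R),\|\cdot\|_\infty)$ (Proposition~\ref{prop_ex_DPp}), so optimal potentials for $\Ups^*(f_n)$ converge uniformly along a subsequence, and weak--strong convergence gives $\Ups^*(f_n)\to\Ups^*(f)$ (Proposition~\ref{prop_basic_ups}~(iv)). Some such device is indispensable; without it the direct method cannot be concluded.

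Two further points. First, your mechanism for the ``crux'' (b) is not the right one: the constraint $\gamma_y\le 1-f$ forces mass out of $\{f>\tfrac12\}$, but for a general competitor in $L^1_m$ that set may carry an arbitrarily small fraction of the mass, so this cannot yield a lower bound on $\gamma(\Gamma_r)$ that is uniform over near-maximisers. The correct mechanism (Step~2 of the proof of Proposition~\ref{prop_mu_inc}) is the energy lower bound: if $\Ups(f)\ge\E(m)/2$ and $r^*$ is chosen so that $m\overline{k}(r^*)\le\E(m)/4$, then necessarily $\gamma(\Gamma_{r^*})\ge m^*>0$ with $m^*$ depending only on $m$ --- positive energy, not pointwise density, is what forces a definite fraction of mass to travel a definite distance. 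Second, the uniform radius $R_*(m)$ valid for \emph{every} maximiser is asserted but not proved; ``maximisers cannot spread'' is not an argument. The paper derives it from the quantitative tightness statement Proposition~\ref{prop_strong_tightness}, whose proof requires an ingredient absent from your outline, namely the monotonicity $f_1\le f_2\Rightarrow f_1+g_1\le f_2+g_2$ of the saturated marginals (Corollary~\ref{coro_monot_f}, itself resting on the saturation Theorem~\ref{thm_saturation} and a comparison principle for dual potentials), together with a chaining argument bounding the diameter of the union of cubes where $f+\gamma_y$ has definite density. Your qualitative concentration--compactness scheme, even if completed, would give existence but not this uniform confinement.
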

	
	Once the existence of maximisers for $\E(m)$ is established, the bathtub principle (see Proposition~\ref{prop_bathtub}) and a saturation result (see Theorem~\ref{thm_saturation}) imply that~\eqref{maxE} admits solutions.
	
	\begin{corollary}\label{coro_maxE}
		Assume that $c$ satisfies the hypotheses of Theorem~\ref{thm_MP_max}. Then, \eqref{maxE} admits a maximiser and $\E_{\mathrm{set}}(m) = \E(m)$ for any $m>0$.
	\end{corollary}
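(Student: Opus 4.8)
The plan is to deduce the corollary from the existence result of Theorem~\ref{thm_MP_max}, the bathtub principle (Proposition~\ref{prop_bathtub}) and the saturation identity of Theorem~\ref{thm_saturation}. Since the present cost satisfies the hypotheses of Theorem~\ref{thm_MP_max}, $\E(m)$ has a maximiser $f^*$, which up to a translation is supported in $\overline B_{R_*}$; as $\Ups$ is invariant under translations we may assume $\supp f^*\subset\overline B_{R_*}$ and $\Ups(f^*)=\E(m)$. By Theorem~\ref{thm_saturation}, $\Ups_{\mathrm{set}}(E)=\Ups(\chi_E)$ for every measurable set $E$ of finite volume, so $\E_{\mathrm{set}}(m)=\sup_{|E|=m}\Ups(\chi_E)$; each such $\chi_E$ is an admissible competitor in~\eqref{MP}, whence $\E_{\mathrm{set}}(m)\le\E(m)$. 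Thus the whole statement reduces to producing a measurable set $E^*$ with $|E^*|=m$ and $\Ups(\chi_{E^*})=\E(m)$: such an $E^*$ attains the supremum in~\eqref{maxE} and forces $\E_{\mathrm{set}}(m)=\E(m)$.

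To build $E^*$ from $f^*$ I would first record that $\Ups$ is convex on $\{f\in L^1(\R^d,[0,1]):\int f=m\}$: given $\gamma_i\in\PI_{f_i}$, the plan $t\gamma_0+(1-t)\gamma_1$ lies in $\PI_{tf_0+(1-t)f_1}$ since the first-marginal constraint is affine and the second-marginal inequality passes to convex combinations, so taking infima gives $\Ups(tf_0+(1-t)f_1)\le t\Ups(f_0)+(1-t)\Ups(f_1)$. Next, I would invoke Kantorovich duality for the constrained problem defining $\Ups(f^*)$ to obtain an optimal pair $(\phi^*,\psi^*)$ with $\psi^*\ge 0$, normalised so that $\psi^*$ is bounded with bounded support; writing $g^*:=\phi^*+\psi^*$, strong duality gives $\Ups(f^*)=\int g^*f^*\,dx-\int\psi^*\,dy$, while weak duality gives $\Ups(\chi_E)\ge\int_E g^*\,dx-\int\psi^*\,dy$ for every measurable $E$. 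Now apply the bathtub principle (Proposition~\ref{prop_bathtub}) to $g^*$ on $\overline B_{R_*}$ — which contains $\supp f^*$ and therefore has volume at least $m$ since $f^*\le 1$ — to obtain $E^*\subset\overline B_{R_*}$ with $|E^*|=m$ and
\[
\int_{E^*}g^*\,dx=\sup\Bigl\{\int_{\overline B_{R_*}}g^*f\,dx:\ 0\le f\le 1,\ \int f=m\Bigr\}\ \ge\ \int g^*f^*\,dx .
\]
Chaining these facts,
\[
\Ups(\chi_{E^*})\ \ge\ \int_{E^*}g^*\,dx-\int\psi^*\,dy\ \ge\ \int g^*f^*\,dx-\int\psi^*\,dy\ =\ \Ups(f^*)\ =\ \E(m),
\]
and since $\chi_{E^*}$ is admissible in~\eqref{MP} we also have $\Ups(\chi_{E^*})\le\E(m)$, so equality holds and $E^*$ is the set we wanted.

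The main point requiring care is the duality input, namely a Kantorovich-type duality theorem for $\Ups(f)$ together with an optimal potential pair having enough integrability to be tested against $f^*$ and against $\chi_{E^*}$; this is where the coercivity~(H\ref{cont}) and a bound on the support of an optimal second marginal (via the monotonicity~(H\ref{monot})) are used, and everything else is bookkeeping. If one prefers to avoid optimal potentials, the same conclusion follows from abstract convex analysis: the set $K:=\{f\in L^1(\R^d,[0,1]):\int f=m,\ \supp f\subset\overline B_{R_*}\}$ is convex and weakly-$*$ compact, its extreme points are exactly the $\chi_E$ with $E\subset\overline B_{R_*}$ and $|E|=m$ (again by the bathtub principle), and $\Ups$ is convex and weakly-$*$ lower semicontinuous on $K$; representing $f^*$ as the barycentre of a probability measure $\mu$ carried by these extreme points (Choquet's theorem) and applying Jensen's inequality yields $\E(m)=\Ups(f^*)\le\int\Ups(\chi_E)\,d\mu(E)\le\E(m)$, so that $\Ups(\chi_E)=\E(m)$ for $\mu$-almost every $E$ and any such $E$ serves as $E^*$.
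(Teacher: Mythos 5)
Your main line of argument is essentially the paper's own proof: existence of a compactly supported maximiser $f^*$ from Theorem~\ref{thm_MP_max}, a compactly supported optimal dual pair (this is exactly Lemma~\ref{lem_ctransfglobal}, whose content you correctly flag as the delicate input), the bathtub principle applied to $\xi=\vhi-\psi$ to replace $f^*$ by a characteristic function, weak duality to check that the characteristic function is still maximising, and the saturation theorem to identify $\Ups_{\mathrm{set}}(E)$ with $\Ups(\chi_E)$. Two small points you gloss over but which the paper makes explicit: the bathtub maximisers are of the form $\chi_{\{\xi>t\}}+\theta$ rather than characteristic functions, so one must select $G\subset\{\xi=t\}$ with $|G|=\int\theta$, which is possible precisely because $\theta\le 1$ forces $|\{\xi=t\}|\ge\int\theta$; and the identity $\Ups_{\mathrm{set}}(E)=\Ups(\chi_E)$ requires noting that the saturated second marginal $\chi_F$ given by Theorem~\ref{thm_saturation} automatically satisfies $|F\cap E|=0$ and $|F|=|E|$ (from $\gamma_y\le 1-\chi_E$ and mass conservation). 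Your closing alternative via convex analysis is, by contrast, a genuinely different route that the paper does not take: the convexity of $\Ups$ (which the paper never records, but which does follow from the fact that $t\gamma_0+(1-t)\gamma_1\in\PI_{tf_0+(1-t)f_1}$ whenever $\gamma_i\in\PI_{f_i}$), together with the weak-$*$ compactness of $K$, the continuity of $\Ups$ on $K$ from Proposition~\ref{prop_basic_ups}~$(iv)$, the standard characterisation of the extreme points of $K$ as characteristic functions (this is not really the bathtub principle, as you parenthetically suggest, but it is classical), and Bauer/Choquet. This softer argument buys you independence from the dual potentials altogether; the paper's duality route costs more here but is needed anyway for Theorem~\ref{thm_ball_unique_max}, so nothing is saved globally. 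Both arguments are correct.
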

	
	As a  second main result, we establish that if $k$ is furthermore radially symmetric then $\E(m)$ and $\E_{\mathrm{set}}(m)$ are uniquely maximised by balls of volume $m$.
	\begin{theorem}\label{thm_ball_unique_max}
		Assume that $c(x,y)= k(|y-x|)$ for some $k \in C(\R_+, \R_+)$ increasing and such that $k(0) = 0$ and $k(x) \to \infty$ as $x \to \infty$. Then, for any $m>0$, the maximisers of $\E(m)$ (and consequently those of $\E_{\mathrm{set}}(m)$) are  the balls of volume $m$. Moreover the minimizer of $\Ups_{\mathrm{set}}(B_1)$ is the annulus $B_{2^{1/d}}\backslash B_1$.
	\end{theorem}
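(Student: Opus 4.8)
Since such a radial, increasing, coercive $k$ with $k(0)=0$ satisfies (H\ref{cont}), (H\ref{cone}) \& (H\ref{monot}) (the sublevel sets of $k$ are balls, which gives (H\ref{cone})), Corollary~\ref{coro_maxE} applies: maximisers of $\E_{\mathrm{set}}(m)$ exist and $\E_{\mathrm{set}}(m)=\E(m)$. Fix $r>0$ with $\omega_d r^d=m$, where $\omega_d:=|B_1|$. The plan is to prove the sharp comparison $\Ups_{\mathrm{set}}(E)\le\Ups_{\mathrm{set}}(B_r)$ for every measurable $E\subset\R^d$ with $|E|=m$, with equality only when $E$ agrees with $B_r$ up to a Lebesgue-null set and a translation. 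Granting this, the uniqueness statement for $\E_{\mathrm{set}}(m)$ is immediate, and the statement for $\E(m)$ follows by passing from a maximiser $f$ to a maximising set via the bathtub principle (Proposition~\ref{prop_bathtub}) together with its equality case; the assertion about $\Ups_{\mathrm{set}}(B_1)$ will drop out of the analysis of $\Ups_{\mathrm{set}}(B_r)$ (with $r=1$).

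The geometric heart of the argument is a Brunn--Minkowski estimate for the inner parallel bodies of $E$. Writing $u_E(x):=\operatorname{dist}(x,\R^d\setminus E)$ and $E^{-t}:=\{u_E>t\}$, one has $E^{-t}\oplus\overline B_t\subset E$, so Brunn--Minkowski gives $|E^{-t}|^{1/d}\le|E|^{1/d}-|\overline B_t|^{1/d}$, that is, $|\{u_E>t\}|\le\omega_d(r-t)_+^d=|\{u_{B_r}>t\}|$ for every $t\ge0$ — in particular $u_E\le r$ a.e. on $E$. A layer-cake computation then yields $\int_E\Phi(u_E)\,dx\le\int_{B_r}\Phi(u_{B_r})\,dx$ for every nondecreasing $\Phi\colon[0,\infty)\to[0,\infty)$; moreover, when $\Phi$ is strictly increasing, equality forces $|\{u_E>t\}|=\omega_d(r-t)_+^d$ for a.e. $t$, whence $E^{-t}\oplus\overline B_t=E$ with equality in Brunn--Minkowski for small $t$, so that $E^{-t}$, hence $E$, is a ball.

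Next I would identify $\Ups_{\mathrm{set}}(B_r)$ and its minimiser. The natural competitor is the radial ``inside--out'' map $T(x):=(|x|^d+r^d)^{1/d}\,x/|x|$, which pushes $\chi_{B_r}$ onto $\chi_{B_R\setminus B_r}$ with $R:=2^{1/d}r$ and costs $I(r):=\int_{B_r}k\bigl((|x|^d+r^d)^{1/d}-|x|\bigr)\,dx$. To prove its optimality I would exhibit a radial Kantorovich-type dual pair $(\phi_0,\psi_0)$ for $\Ups(\chi_{B_r})$: $\psi_0\ge0$, supported on and strictly positive in the open annulus $B_R\setminus\overline B_r$, with $\phi_0(x)-\psi_0(y)\le k(|x-y|)$ everywhere and equality on the graph of $T$, and $\int_{B_r}\phi_0-\int\psi_0=I(r)$; here $\psi_0$ is determined along rays by the requirement that $y\mapsto k(|x-y|)+\psi_0(y)$ be minimised at $y=T(x)$, i.e.\ by a first-order ODE, and $\phi_0$ is its $c$-transform. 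Weak duality then gives $\Ups_{\mathrm{set}}(B_r)=I(r)$, and complementary slackness (strict positivity of $\psi_0$ on the annulus) forces any optimal second marginal to equal $\chi_{B_R\setminus B_r}$, which is the ``moreover'' assertion.

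Finally, for a general $E$ with $|E|=m$ I would construct an admissible exterior transport plan realising $\Ups_{\mathrm{set}}(E)\le\int_E\Psi(u_E)\,dx$, where $\Psi(t):=k\bigl(((r-t)^d+r^d)^{1/d}-(r-t)\bigr)$ is nondecreasing and nonnegative and, by the substitution $t=r-|x|$, satisfies $\int_{B_r}\Psi(u_{B_r})=I(r)$. Writing $\Psi(t)=k(D(t))$ with $D(t):=((r-t)^d+r^d)^{1/d}-(r-t)\ge t$, the point is that moving the mass located at $x$ by the amount $D(u_E(x))$ does carry it out of $E$; combining this with the Brunn--Minkowski comparison of the second paragraph (applied to $\Phi=\Psi$) and the identity $\int_{B_r}\Psi(u_{B_r})=I(r)=\Ups_{\mathrm{set}}(B_r)$ gives $\Ups_{\mathrm{set}}(E)\le\Ups_{\mathrm{set}}(B_r)$, and in the equality case both inequalities become equalities, whence equality in Brunn--Minkowski forces $E$ to be a ball. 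The main obstacle is precisely the construction of this competitor for arbitrary — possibly very irregular or strongly non-convex — $E$: one needs a plan moving the mass at $x$ no farther than $D(u_E(x))$ whose second marginal does not exceed $1-\chi_E$, and the delicate task is to control, through the coarea formulas for $u_E$ and for $\operatorname{dist}(\cdot,E)$ together with the Brunn--Minkowski estimate, the mass concentration that a naive ``normal push'' along $-\nabla u_E$ creates near the medial axis of $u_E$ and near the focal set of $\partial E$ (where such mass must instead be dispersed, e.g.\ into nearby concavities of $E$). A secondary technical point is the global verification of the dual inequality for $(\phi_0,\psi_0)$ in the ball case.
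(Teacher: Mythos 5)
Your reduction to the set problem and the Brunn--Minkowski observation on inner parallel bodies are sound, but the central inequality of your scheme, $\Ups_{\mathrm{set}}(E)\le\int_E\Psi(u_E)\,dx$ with $u_E(x)$ the distance from $x$ to $\R^d\setminus E$, is false, so the gap you flag as ``the main obstacle'' cannot be closed. Take $E_\eps:=(B_r\setminus B_\eps)\cup G_\eps$ where $G_\eps$ is a ball of volume $\om_d\eps^d$ placed far away, so that $|E_\eps|=\om_d r^d=m$. For $x\in B_r\setminus B_\eps$ one has $u_{E_\eps}(x)=\min(|x|-\eps,\,r-|x|)$, which on the inner half $\{|x|<r/2\}$ is strictly smaller than $r-|x|=u_{B_r}(x)$ in the limit $\eps\to0$. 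Since $\Psi=k\circ D$ is strictly increasing (as $D$ is), $\lim_{\eps\to0}\int_{E_\eps}\Psi(u_{E_\eps})\,dx=\int_{B_r}\Psi(\min(|x|,r-|x|))\,dx$ is \emph{strictly less} than $\int_{B_r}\Psi(u_{B_r})\,dx=I(r)=\Ups_{\mathrm{set}}(B_r)$, whereas $\Ups_{\mathrm{set}}(E_\eps)\to\Ups_{\mathrm{set}}(B_r)$ by the Lipschitz continuity of $\Ups$ in $L^1$ (Proposition~\ref{prop_basic_ups}~(iii)). So for small $\eps$ the inequality is reversed. The underlying reason is that $u_E$ measures distance to the complement, not to \emph{available room}: a perforation of negligible volume makes $u_E$ collapse near the centre without providing anywhere to deposit the mass, so no admissible plan can move each point by at most $D(u_E(x))$, and no other plan can achieve the claimed cost either. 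Your strict-equality/uniqueness analysis inherits the same defect, and the duality construction for the ball (the pair $(\phi_0,\psi_0)$) is also left unverified.

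For comparison, the paper avoids estimating $\Ups_{\mathrm{set}}(E)$ on the primal side altogether: it works with the dual problem $\Ups^*(f)$, replaces the optimal potential $\psi$ by its symmetric increasing rearrangement $\psi_*$, and proves the key comparison $(\psi^c)^*\le(\psi_*)^c$ by applying exactly your Brunn--Minkowski inner-parallel-body estimate --- but to the superlevel sets $\{-\psi>t-k(r)\}$ of the potential rather than to the arbitrary set $E$. Because the sets entering the $c$-transform are sublevel sets of a continuous function that is only ever \emph{shrunk} by the inner-parallel operation inside an infimum over $r$, the pathology above never arises; the Hardy--Littlewood inequality and the bathtub principle then transfer the gain to $f$, and the equality cases of Brunn--Minkowski and of the bathtub principle give uniqueness and the identification of the optimal second marginal as the annulus. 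If you want to salvage a primal argument you would need a lower bound on the free volume reachable from each depth level of $E$, which is essentially equivalent to passing to the dual anyway.
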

	We point out that cost functions satisfying the hypotheses of Theorem~\ref{thm_ball_unique_max} also satisfy hypotheses (H\ref{cont}),(H\ref{cone})\&(H\ref{monot}).
	Let us briefly sketch the proofs of these three results. They all strongly rely on the properties of the dual problem
	\begin{equation*}
		\Ups^*(f):=\sup\lt\{\int \lt(f \vhi + (1-f) \psi \rt) \, dx :  (\vhi,\psi) \in \Phi \rt\},
	\end{equation*}
	
	\noindent
	where 
	\[
	\Phi := \lt\{(\vhi, \psi) \in C_b(\R^d) \times C_b(\R^d), \, \psi \leq 0, \, \vhi(x)+\psi(y) \leq c(x,y) \ \forall \, (x,y) \in \R^d \times \R^d \rt\}.
	\] 
	
	\medskip
	
	We establish Theorem~\ref{thm_MP_max} using the direct method of Calculus of Variations. The main difficulty is to establish compactness of maximising sequences. If we refer to the concentration-compactness principle~\cite{PLLconccomp}, we have to prove that given a maximising sequence $f_n$, no mass escapes at infinity.  To do so we establish two crucial results. The first one is that $m \mapsto \E(m)/m$ is increasing (see Proposition~\ref{prop_mu_inc}). This implies that $m \mapsto \E(m)$ is strictly superadditive, \textit{i.e.} that for $m>m'>0$,
	\begin{equation}\label{stricsubadd}
		\E(m') + \E(m-m')< \E(m).
	\end{equation}
	Notice that this is the counterpart of the strict subbadditivity inequality (also called binding inequality) which is known to provide compactness in minimisation problems, see \textit{e.g.}~\cite{PLLconccomp,FrankLieb,frankNam}.
	Using the dual formulation $\Ups^*$ of $\Ups$, we obtain the second crucial result for Theorem~\ref{thm_ball_unique_max}: a monotonicity principle on the sum of marginals of minimisers $\gamma$ of $\Ups(f)$ (see Corollary~\ref{coro_monot_f}). This is the most delicate part of the proof. Combining this  and~\eqref{stricsubadd}, we prove that if $f$ is almost maximising then most of its mass must remain in a bounded region (see Proposition~\ref{prop_strong_tightness}). This  gives  tightness of maximising sequences for $\E(m)$.
	
	\medskip
	
	To prove Corollary~\ref{coro_maxE}, we consider a maximiser $f$ of $\E(m)$ provided by Theorem~\ref{thm_MP_max} and a pair of potentials $(\vhi, \psi)$ optimal for the dual problem $\Ups^*(f)$. Using the definition of $\Ups^*$ we see that $f$ is a maximiser of
	\[
	\sup\lt\{\int \widetilde{f} (\vhi - \psi)\, : 0 \leq \widetilde{f} \,\leq 1, \, \int \widetilde{f}\, = m\rt\}.
	\]
	By the bathtub principle, $f = \chi_{\{\vhi-\psi > t\}} + \theta$ for some $t \in \R$ and some $\theta \in L^1(\R^d, [0,1])$ supported in $\{\vhi-\psi = t \}$. Then for any measurable subset $G\subset  \{\vhi - \psi = t\}$ with $|G|=\int\theta\,$, the characteristic function of  $E:=\{\vhi-\psi > t\}\cup G$ is also a maximiser for $\E(m)$. By Theorem~\ref{thm_saturation} and Corollary~\ref{coro_unique_g} applied to $E$, there exists $F \subset \R^d$ such that any minimiser $\gamma$ of $\Ups(\chi_E)$ satisfies $\gamma_y = \chi_F$. This finally implies that $E$ maximises~\eqref{maxE}.
	
	\medskip	
	
	Regarding Theorem~\ref{thm_ball_unique_max}, as explained in Section~\ref{ballisunique}, we may assume without loss of generality that $m = \om_d$, the volume of the unit ball. Combining Theorem~\ref{thm_MP_max} and Lemma~\ref{lem_ctransfglobal} yields that
	\begin{equation}\label{doublesup}
		\sup_{\smallint f\, = \om_d} \lt\{ \sup_{(\psi^c, \psi) \in \Phi} \lt\{ \int f (\psi^c - \psi)\, + \int \psi\, \rt\}\rt\},
	\end{equation}
	coincides with $\E(m)$ and admits a solution $(f, \psi^c, \psi)$, where $\psi^c$ is the $c$-transform of $\psi$ (see Definition~\ref{def_ctransf}). To show that balls are maximisers of $\E(m)$, we establish that each term in~\eqref{doublesup} is improved by replacing $f$ by $\chi_{B_1}$ and $\psi$ by its symmetric increasing rearrangement $\psi_*$ (see Definition~\ref{def_sym_rear}). As $\smallint \psi\, = \smallint \psi_*\,$, the third term in~\eqref{doublesup} does not change under rearrangement. Regarding the second term, combining the Hardy-Littlewood inequality (see~\cite[Theorem 3.4]{LiebLoss}) and the bathtub principle yields (recall that $\psi \leq 0$)
	\[
	-\int f \psi\, \leq -\int f^* \psi_*\, \leq -\int \chi_{B_{1}} \psi_*\,,
	\] 
	where $f^*$ is the symmetric decreasing rearrangement of $f$ (see Definition~\ref{def_sym_rear}). The study of the first term $\smallint f \psi^c\,$ is more involved. Indeed it requires to understand the interactions between the operations of $c-$transform and symmetrization. To the best of our knowledge, this type of questions have not been addressed so far. Using the Brunn-Minkowski inequality, we obtain the following crucial comparison:
	\[
	(\psi^c)^* \leq (\psi_*)^c.
	\] 
	Combining this inequality with the Hardy-Littlewood inequality yields
	\begin{equation}\label{introHL}
		\int f \psi^c\, \leq \int f^* (\psi^c)^*\, \leq \int f^* (\psi_*)^c\,.
	\end{equation}
	Additionally, as $(\psi_*)^c$ is non-increasing, $\chi_{B_1}$ is a maximiser of 
	\begin{equation}\label{introbathtub}
		\sup \lt\{\int \widetilde{f} (\psi_*)^c\,: 0 \leq \widetilde f \leq 1, \, \int \widetilde{f}\, = \om_d \rt\},
	\end{equation}
	so that $\smallint f^* (\psi_*)^c\, \leq \smallint (\psi_*)^c \chi_{B_{1}}\,$. Lastly, by~\eqref{introHL}, $\smallint f \psi^c\,\leq \smallint \chi_{B_{1}} (\psi_*)^c\,$. This eventually proves that unit balls maximise $\E(\om_d)$.
	
	As for uniqueness, the key property to establish is that $(\psi_*)^c$ is decreasing on $B_{1}$ (see Lemma~\ref{lem_monotpotball}). Indeed, by~\cite[Theorem 3.4]{LiebLoss}, this implies that $\chi_{B_{1}}$ is the unique maximiser of~\eqref{introbathtub}. Combining this with the fact that the inequalities in~\eqref{introHL} are now equalities, we obtain that $f^* = \chi_{B_{1}}$, so that $f = \chi_{E}$ for some $E \subset \R^d$. Using the equality case of the Brunn-Minkowski inequality, we then show that (up to a translation) $f = \chi_{B_{1}}$, concluding the proof.
	
	\subsection{Motivation}\hfill
	
	In~\cite{BCL2020}, the following variational problem was introduced: 
	\begin{equation}\label{singleOptim}
		\inf_{|E| = \om_d} \lt\{ P(E) + \alpha \Ups_p(E) \rt\},
	\end{equation}
	where $\alpha >0$ and where $\Ups_p$ is the functional $\Ups$ defined in~\eqref{Upsfunc} with the cost $c(x,y) = |x-y|^p$. Such a variational problem may be used to model the formation of bi-layer biological membranes (see~\cite{PeRo09,LuPeRo14}).  Existence of minimisers were obtained in the series of work~\cite{BCL2020,XiaZhou,NoToVenk,CanGol}.
	
	\medskip
	
	Notice that~\eqref{singleOptim} is an isoperimetric problem with a non-local term $\Ups_p$ where the perimeter term favors aggregation and the  non-local term is of repulsive nature. One of the best-known examples of this type of problem is Gamow's liquid drop model for the atomic nucleus. Since the beginning of the 2010s (see~\cite{gamowhist} for an historical perspective), this model has received a lot of attention from the mathematical community, and several versions of it have been studied, see for instance~\cite{KnMu, golnovruf, KnMuNov,GolMerPe}. In this framework, the functional to be optimized is  
	\[
	 P(E) + \alpha V_\beta(E),
	\]
	where the repulsive non-local term $V_\beta(E)$ is given by the Riesz potential
	\begin{equation}\label{Rieszpot}
	V_\beta(E) := \int_E\int_E \frac{dxdy}{|x-y|^{d-\beta}},\qquad \text{with }\beta \in (0,d).
	\end{equation}
	It is well known that the \emph{minimizers} of the perimeter under volume constraints are the balls of the given volume: this reflects the aggregative nature of the perimeter. A natural way to illustrate the competition  between the perimeter and the non-local term is then to establish that, on the contrary, the \emph{maximizers} of the non-local term under volume constraints are balls. 
	\begin{enumerate}[--]
	\item In the case of~\eqref{Rieszpot}   this is a consequence of Riesz's rearrangement inequality.
	\item For the non-local functional $\Ups_p$ and more generally for the functionals $\Ups$ (and $\Ups_{\mathrm{set}}$), this corresponds to Theorem~\ref{thm_ball_unique_max}. Assuming some natural hypotheses on the cost $c$, the \emph{maximizers} of $\Ups$ under volume constraint  are the characteristic functions of balls. The theorem applies in particular to $\Ups_p$. 
	\end{enumerate}
	 In the latter case, the proof is much more involved since the rearrangement argument does not seem to work well for the primal problem. We consider instead the dual problem  $\Ups^*$ and study  the subtle and fortunately favourable interplay between rearrangements and $c-$transforms.\label{Ups(f)}
	\medskip
	
	As a closing remark, we point out that the functional $\Ups$ is a particular case of  the optimal partial transport problem studied in~\cite{FigPartial, DePMSV}.
	\subsection*{Added after submission:} A few days after this paper has been submitted, Burchard, Carazzato and Topaloglu posted on the Arxiv a paper proving very similar results to ours but with totally different methods, see \cite{BuCaTop}.

	\subsection{Organization of the article}\hfill
	
	The paper is structured as follows. In Section~\ref{StdOT}, we introduce the notation and review standard facts related to optimal transport in complete separable metric spaces. In Section~\ref{Xcompact}, we obtain preliminary results on the functional $\Ups$ defined  in compact spaces. In Section~\ref{trslinv}, we establish Theorem~\ref{thm_MP_max}. Eventually, in Section~\ref{ballisunique}, we prove Theorem~\ref{thm_ball_unique_max}.
	
	\section{Notation and preliminary results}\label{StdOT}
	
	\subsection{Notation}\hfill
	
	Let $(X, d_X)$ be a Polish space endowed with a positive Radon measure $\lambda$.
	
	\medskip
	
	Given a function $f : X \to \R$, we decompose it as: 
	\[
	f = f_+ + f_- \quad \text{ with } \quad f_+ := \max(0, f) := 0 \vee f \quad\ \text{and}\ \quad f_- := \min(0, f) := 0 \wedge f.
	\]
	Let us stress that $f_-$ is non-positive, contrary to the classical decomposition of a function into its positive and negative parts.
	
	\medskip
	
	We endow $\mathcal{M}_+(X)$ with the topology  induced by duality with $C_b(X)$ (often called narrow convergence). The convergence of a sequence $\mu_n \in \mathcal{M}_+(X)$ to $\mu \in \mathcal{M}_+$  is written: $\mu_n \st*\rightharpoonup \mu$ as $n \to \infty$.
	
	\medskip
	
	Given a measure $\mu \in \mathcal{M}_+(X)$ and a set $A \subset X$, the restriction of $\mu$ to $A$ is the measure $\mu \restr A$ defined as $\mu \restr A (B) := \mu (B \cap A)$ for every Borel set $B$ of $X$. The support of $\mu$, denoted by $\supp \mu$, is the closed set defined by
	\[
	\supp \mu := \lt\{x \in X : \mu(A) > 0 \text{ for all open set A containing } x\rt\}.
	\]
	Given $f \in L^1(X, \lambda)$ the support of $f$ is defined as the support of the measure $f d\lambda$ and denoted by $\supp f$. We identify the measure $f d\lambda$ with its density $f$ and write $f_n\st*\rightharpoonup f$ as $n \to \infty$ to signify that $\int f_n\xi\,$ converges to $\int f\xi\,$ for every $\xi \in C_b(X)$.
	
	\medskip
	
	Given a function $f \in L^1_{\mathrm{loc}}(\R^d, \R)$, we denote by $\mathrm{Leb}(f)$ the set of its Lebesgue points.
	
	\medskip
	
	Given $x \in \R^d$ and $r >0$, $B_r(x)$ denotes the open ball of radius $r$ centred at $x$, and $B_r$ denotes the open ball of radius $r$ centred at $0$. The closed ball of radius $r$ centred at $x$ is denoted by $\overline{B}_r(x)$. The volume of the unit ball in $\R^d$ is denoted by $\om_d$.
	
	\medskip
	
	Given two sets $A, B$ of $\R^d$, we define their sum $A+B:= \{a +b, \, a \in A, \, b \in B\}$. The gap between $A$ and $B$ is $d(A,B) := \inf \{|a-b|, \, a \in A, \, b \in B\}$.
	
	\subsection{Optimal transport theory}\hfill
	
	In this subsection, we recall some results regarding standard optimal transport theory. Most of the material presented here comes from~\cite[Chapter 1]{OTAM}.
	\medskip
	
	Let $(X, d_X)$ be a complete separable metric space (\ie a Polish space) and let $c : X \times X \to \R$ be measurable. Given $\mu, \nu \in \mathcal{M}_+(X)$ such that $\mu(X) = \nu(X)$, the Kantorovitch problem with marginals $\mu$ and $\nu$ and cost $c$ is
	\begin{equation}\label{KP}
		\mathcal{T}_c(\mu, \nu) := \inf \lt\{\int c \, d\gamma : \, \gamma \in \PI(\mu, \nu)\rt\},
	\end{equation}
	where $\PI(\mu, \nu)$ is the set of transport plans between $\mu$ and $\nu$, \ie
	\begin{equation*}
		\PI(\mu, \nu) := \lt\{ \gamma \in \mathcal{M}_+(X \times X) : \, \gamma_x = \mu, \,  \gamma_y = \nu \rt\}.
	\end{equation*}
	Problem~\eqref{KP} admits a dual formulation given by
	\begin{equation}\label{DKP}
		\mathcal{T}_c^*(\mu, \nu) := \sup \lt\{\int \vhi \, d\mu +\int \psi \, d\nu : \, \vhi, \psi \in C_b(X), \, \vhi \oplus \psi \leq c\rt\},
	\end{equation}
	where the function $\vhi \oplus \psi$ is defined on $X \times X$ by $(\vhi \oplus \psi)(x,y):= \vhi(x)+\psi(y)$.
	
	\medskip

	\begin{theorem}[Theorem 1.7 of~\cite{OTAM}]
		Let $c : X \times X \to \R$ be lower semi-continuous and bounded from below and let $\mu, \nu \in \mathcal{M}_+(X)$ with $\mu(X) = \nu (X)$. Then~\eqref{KP} admits a solution and
		\begin{equation*}
			\mathcal{T}_c(\mu, \nu) = \mathcal{T}_c^*(\mu, \nu).
		\end{equation*}
	\end{theorem}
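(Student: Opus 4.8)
The plan is to prove the two assertions of the theorem separately: that~\eqref{KP} admits a minimiser, and that $\mathcal{T}_c(\mu,\nu)=\mathcal{T}_c^*(\mu,\nu)$. One inequality is immediate: if $(\vhi,\psi)\in C_b(X)\times C_b(X)$ satisfies $\vhi\oplus\psi\le c$ and $\gamma\in\PI(\mu,\nu)$, then $\int\vhi\,d\mu+\int\psi\,d\nu=\int(\vhi\oplus\psi)\,d\gamma\le\int c\,d\gamma$, so $\mathcal{T}_c^*(\mu,\nu)\le\mathcal{T}_c(\mu,\nu)$. The content is the existence of a minimiser and the reverse inequality.

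For existence I would apply the direct method over $\PI(\mu,\nu)$. This set is nonempty, containing $\mu(X)^{-1}(\mu\otimes\nu)$ when $\mu(X)>0$ (the case $\mu(X)=0$ being trivial), and it is compact for the narrow topology: given $\eps>0$, inner regularity of $\mu$ and $\nu$ provides compacts $K_1,K_2\subset X$ with $\mu(X\setminus K_1),\nu(X\setminus K_2)<\eps$, whence every $\gamma\in\PI(\mu,\nu)$ charges the complement of the compact set $K_1\times K_2$ by at most $\mu(X\setminus K_1)+\nu(X\setminus K_2)<2\eps$, so $\PI(\mu,\nu)$ is tight and relatively compact by Prokhorov's theorem, while closedness under narrow convergence follows by testing the marginal constraints against $C_b(X)$. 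Moreover $c$, being lower semi-continuous and bounded below on the metric space $X\times X$, is the pointwise limit of an increasing sequence $(c_k)\subset C_b(X\times X)$ (such a sequence exists on any metric space, \textit{e.g.}~the inf-convolutions of $c$ with multiples of the distance, truncated from above), so $\gamma\mapsto\int c\,d\gamma=\sup_k\int c_k\,d\gamma$ is narrowly lower semi-continuous, being a supremum of narrowly continuous functionals; a minimising sequence then has a narrowly convergent subsequence whose limit lies in $\PI(\mu,\nu)$ and minimises~\eqref{KP}.

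For the duality I would first treat the case $c\in C_b(X\times X)$. On the Banach space $E=C_b(X\times X)$ consider the convex functionals $A(u):=0$ if $u\le c$ pointwise and $A(u):=+\infty$ otherwise, and $B(u):=-\int\vhi\,d\mu-\int\psi\,d\nu$ if $u=\vhi\oplus\psi$ for some $\vhi,\psi\in C_b(X)$ and $B(u):=+\infty$ otherwise, which is well defined because $\mu(X)=\nu(X)$. By construction $\inf_E(A+B)=-\mathcal{T}_c^*(\mu,\nu)$; moreover $A$ is finite and continuous at any sufficiently negative constant function, which lies in the interior of $\{u\le c\}$, and $B$ is finite there, so the Fenchel--Rockafellar theorem applies and yields $\inf_E(A+B)=\max_{\pi\in E^*}\bigl(-A^*(-\pi)-B^*(\pi)\bigr)$. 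By the Riesz representation theorem $E^*$ is the space of bounded regular finitely additive signed Borel measures on $X\times X$, and a direct computation shows that $-A^*(-\pi)-B^*(\pi)$ equals $-\infty$ unless $\gamma:=-\pi$ is a non-negative such measure with marginals $\gamma_x=\mu$ and $\gamma_y=\nu$, in which case it equals $-\int c\,d\gamma$. The crucial observation is then that such a $\gamma$, having tight countably additive marginals $\mu$ and $\nu$, is itself tight---for $\eps>0$, compacts $K_1,K_2$ as above give $\gamma\bigl((X\times X)\setminus(K_1\times K_2)\bigr)<2\eps$---and a tight bounded regular finitely additive measure is countably additive; hence $\gamma\in\PI(\mu,\nu)$. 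Therefore the right-hand side is $\max_{\gamma\in\PI(\mu,\nu)}\bigl(-\int c\,d\gamma\bigr)=-\mathcal{T}_c(\mu,\nu)$, so $\mathcal{T}_c(\mu,\nu)=\mathcal{T}_c^*(\mu,\nu)$ (and, incidentally, the maximiser is a genuine optimal transport plan).

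Finally I would remove the assumption $c\in C_b$. Writing $c=\sup_k c_k$ with $c_k\in C_b(X\times X)$ increasing as above, the previous step gives $\mathcal{T}_{c_k}(\mu,\nu)=\mathcal{T}^*_{c_k}(\mu,\nu)\le\mathcal{T}^*_c(\mu,\nu)\le\mathcal{T}_c(\mu,\nu)$, so it suffices to show $\lim_k\mathcal{T}_{c_k}(\mu,\nu)=\mathcal{T}_c(\mu,\nu)$. Picking $\gamma_k\in\PI(\mu,\nu)$ optimal for $c_k$ and extracting (by compactness of $\PI(\mu,\nu)$) a narrow limit $\gamma\in\PI(\mu,\nu)$, one has, for each fixed $m$, $\int c_m\,d\gamma=\lim_k\int c_m\,d\gamma_k\le\lim_k\mathcal{T}_{c_k}(\mu,\nu)$ since $\int c_m\,d\gamma_k\le\int c_k\,d\gamma_k=\mathcal{T}_{c_k}(\mu,\nu)$ once $k\ge m$; letting $m\to\infty$ gives $\mathcal{T}_c(\mu,\nu)\le\int c\,d\gamma\le\lim_k\mathcal{T}_{c_k}(\mu,\nu)$, so equality holds throughout and $\gamma$ is moreover optimal for $c$. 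The step I expect to be the main obstacle is the duality for $c\in C_b$---more precisely, identifying the dual of $C_b(X\times X)$ with finitely additive measures and checking that the Fenchel--Rockafellar maximiser, a priori only finitely additive, is forced by tightness of $\mu$ and $\nu$ to be an honest (countably additive) transport plan. Once this is in place, the passage to lower semi-continuous costs and the existence statement are routine.
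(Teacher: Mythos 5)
This statement is quoted from Santambrogio's book (Theorem 1.7 of [OTAM]) and the paper gives no proof of it, so there is nothing to match your argument against line by line; what can be said is that your proposal is a correct rendition of the classical proof, and that it is philosophically the same convex-duality argument the authors later adapt for their own variant in Proposition~\ref{prop_DP=OP}. Your existence part (tightness of $\PI(\mu,\nu)$ from Ulam's theorem, Prokhorov, and lower semi-continuity of $\gamma\mapsto\int c\,d\gamma$ via monotone approximation of $c$ by bounded continuous functions) is exactly the standard route, as is the final monotone-approximation step reducing lower semi-continuous costs to bounded continuous ones. The one place where your sketch is thinner than it should be is the step you yourself flag: having obtained from Fenchel--Rockafellar a non-negative $\gamma\in C_b(X\times X)^*$ with the right marginals, you assert $\gamma\bigl((X\times X)\setminus(K_1\times K_2)\bigr)<2\eps$ "as above", but $\gamma$ is a priori only known through its action on $C_b$, and the indicator of the open set $(X\setminus K_1)\times X$ is not dominated by any $\vhi\oplus\psi$ with $\vhi,\psi$ continuous and of small integral (a continuous $\vhi\ge\chi_{X\setminus K_1}$ must equal $1$ on $\partial K_1$, whose $\mu$-measure you do not control, and the closed $\delta$-neighbourhood of $K_1$ need not be compact in a general Polish space). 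The standard repair is to test $\gamma$ only on closed sets $C$ disjoint from $K_1\times K_2$, use that $d(C,K_1\times K_2)>0$ to build Urysohn cut-offs $\vhi,\psi$ vanishing on $K_1$, $K_2$ with $\vhi\oplus\psi\ge 1$ on $C$ and $\int\vhi\,d\mu+\int\psi\,d\nu<2\eps$, and then invoke regularity plus Alexandroff's theorem (equivalently, verify the Daniell condition directly). This is a known, fixable technicality rather than a gap, but it is precisely the complication that the paper sidesteps in its own duality proof by working on a compact $X$, where $C(X\times X)^*$ consists of genuine Radon measures; Santambrogio's book likewise proves the compact case first and reaches the Polish case by restriction and approximation rather than through finitely additive measures.
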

	Using the notion of $c$-transform of a function, the maxima of~\eqref{DKP} can be further characterised.
	
	\begin{definition}\label{def_ctransf}
		Given a function $\xi : X \to \R \cup \{+\infty\}$, we define its $c$-transform (or $c$-conjugate) $\xi^c : X \to \R \cup \{-\infty\}$ by
		\begin{equation*}
			\xi^c(y) := \inf_{x \in X} \lt\{c(x,y) - \xi(x) \rt\}.
		\end{equation*}
		
		\noindent
		Denoting $\bar c(y,x) := c(x,y)$, the $\bar c$-transform of $\zeta : X \to \R \cup \{+\infty\}$ is given by
		
		\begin{equation*}
			\zeta^{\bar c}(x) := \inf_{y \in X} \lt\{\bar c(y,x) - \zeta(y) \rt\}.
		\end{equation*}
		
		\noindent
		A function $\psi : X \to \R \cup \{-\infty\}$ is said to be $\bar c$-concave if there exists $\xi :X \to \R \cup \{+\infty\}$ such that $\psi = \xi^c$ (the definition of $c$-concavity is analogous).
	\end{definition}
	
	\begin{definition}
		Let $(X, d_X)$ be a metric space and $\om \in C(\R_+, \R_+)$ be increasing and such that $\om(0) = 0$. A function $\vhi : X \to \R$ is $\om$-continuous if for all $x, x' \in X$,
		\[
		|\vhi(x) - \vhi(x')| \leq \om (d_X(x, x')).
		\]
		
		\noindent
		Similarly, we say that $c : X \times X \to \R$ is $\om$-continuous if for all $x, x', y, y' \in X$,
		\[
		|c(x,y)-c(x', y')| \leq \om(d_X(x,x')+d_X(y,y')).
		\]		
	\end{definition}
	
	\begin{proposition}\label{prop_ctransf}
		Let $\vhi, \psi : X \to \R$ be fixed and assume that $\vhi^c$ and $\psi^{\bar c}$ take real values. The following statements hold:
		\begin{enumerate}[(i)]
			\item If $c$ is $\om$-continuous, then $\vhi^c$ is also $\om$-continuous,
			\item $\vhi^{c \bar c} \geq \vhi$, and $\vhi^{c \bar c} = \vhi$ if and only if $\vhi$ is $c$-concave,
			\item $\vhi^c$ is the largest function $\psi$ compatible with the constraint $\vhi \oplus \psi \leq c$ and $\psi^{\bar c}$ is the largest function $\vhi$ compatible with the constraint $\vhi \oplus \psi \leq c$.
		\end{enumerate}
		
	\end{proposition}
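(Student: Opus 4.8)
The plan is to handle the three items in the order (i), (iii), (ii): (i) is independent of the others, and (iii) is the elementary fact on which the equality statement in (ii) rests. For (i), I would fix $y,y'\in X$ and use $\om$-continuity of $c$ in the form $c(x,y)\le c(x,y')+\om(d_X(y,y'))$ for every $x$, so that $c(x,y)-\vhi(x)\le\big(c(x,y')-\vhi(x)\big)+\om(d_X(y,y'))$; taking the infimum over $x$ (legitimate since $\vhi^c$ is assumed real-valued) gives $\vhi^c(y)\le\vhi^c(y')+\om(d_X(y,y'))$, and exchanging $y$ and $y'$ yields $|\vhi^c(y)-\vhi^c(y')|\le\om(d_X(y,y'))$.

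For (iii), I would just unwind the definition of the infimum: the inequality $\vhi\oplus\psi\le c$ says $\psi(y)\le c(x,y)-\vhi(x)$ for all $(x,y)$, which for each fixed $y$ is equivalent to $\psi(y)\le\inf_x\{c(x,y)-\vhi(x)\}=\vhi^c(y)$. Hence $\vhi^c$ is pointwise the largest $\psi$ with $\vhi\oplus\psi\le c$, and in particular $\vhi\oplus\vhi^c\le c$; the statement for $\psi^{\bar c}$ is the same computation with the two variables exchanged, which is exactly why $\bar c$ is introduced. I would also record here the immediate order-reversal property: $\xi\le\eta$ implies $\xi^c\ge\eta^c$.

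For (ii), I would iterate (iii). Applying the second half of (iii) to the constraint ``$\widetilde\vhi\oplus\vhi^c\le c$'' shows that $(\vhi^c)^{\bar c}=\vhi^{c\bar c}$ is the largest such $\widetilde\vhi$; since $\vhi$ satisfies this constraint (first half of (iii)), we get $\vhi^{c\bar c}\ge\vhi$. One checks in passing that $\vhi^{c\bar c}$ is real-valued, being $\ge\vhi>-\infty$ and $\le c(x,y_0)-\vhi^c(y_0)<+\infty$ for any fixed $y_0$. For the equality case I would first derive the triple-transform identity $\vhi^{c\bar c c}=\vhi^c$: applying $(\cdot)^c$ to $\vhi^{c\bar c}\ge\vhi$ and using order-reversal gives $\vhi^{c\bar c c}\le\vhi^c$, while the inequality just proven, applied (with the roles of the two variables swapped) to the function $\vhi^c$, gives $\vhi^{c\bar c c}\ge\vhi^c$. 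Then, if $\vhi$ is $c$-concave, say $\vhi=\zeta^{\bar c}$, the $\bar c$-version of the triple-transform identity yields $\vhi^{c\bar c}=\zeta^{\bar c c\bar c}=\zeta^{\bar c}=\vhi$; conversely, if $\vhi^{c\bar c}=\vhi$ then $\vhi=(\vhi^c)^{\bar c}$ exhibits $\vhi$ as a $\bar c$-transform, i.e. as $c$-concave.

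I do not expect a genuine obstacle here: the argument is a chain of manipulations of infima. The only points demanding care are the bookkeeping between $c$ and $\bar c$ in the definition of $c$-concavity, and checking at each step that the transforms remain real-valued so the exchanges of infima and inequalities are valid; the triple-transform identity is the one spot where one must combine order-reversal with the first half of (ii) rather than argue by formal cancellation.
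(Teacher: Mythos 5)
Your argument is correct; the paper states Proposition~\ref{prop_ctransf} without proof, recalling it as standard material from~\cite[Chapter 1]{OTAM}, and your chain of manipulations (the $\om$-continuity estimate after freezing one variable, the pointwise characterisation of the constraint $\vhi\oplus\psi\le c$, and the triple-transform identity for the equality case) is precisely the classical proof. The care you take with real-valuedness and with the $c$/$\bar c$ bookkeeping is exactly what is needed, so there is nothing to add.
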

	Remark that if $X$ is compact and $\vhi$, $\psi$ and $c$ are bounded then $\vhi^c$ and $\psi^{\bar c}$ take real values. Moreover, if $c$ is continuous, say $\om$-continuous, the proposition states that $\vhi^c$ and $\psi^{\bar c}$ are $\om$-continuous. This yields the following existence result for~\eqref{DKP}.
	
	\begin{theorem}[Proposition 1.11 of~\cite{OTAM}]
		Let $X$ be a compact metric space and $c : X \times X \to \R$ be continuous. Then there exists a solution $(\vhi, \psi)$ to~\eqref{DKP}, where $\vhi$ is $c$-concave and $\psi = \vhi^c$. In particular,
		\begin{equation*}
			\mathcal{T}_c^*(\mu, \nu) = \max \lt\{\int \vhi \, d\mu + \int \vhi^c \, d\nu :\, \vhi \, \text{ c-concave} \rt\}.
		\end{equation*}
	\end{theorem}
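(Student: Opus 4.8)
The plan is to run the direct method of the Calculus of Variations on the dual problem~\eqref{DKP}, using the $c$-transform to replace an arbitrary admissible pair by one lying in a class that is precompact for uniform convergence. Since $X$ is compact and $c$ is continuous, $c$ is bounded on $X\times X$ and admits a modulus of continuity $\om\in C(\R_+,\R_+)$, increasing with $\om(0)=0$, for which $c$ is $\om$-continuous; by Proposition~\ref{prop_ctransf}(i) and the remark following it, every $c$- and $\bar c$-transform of a bounded function is then real-valued and $\om$-continuous. A maximising sequence $(\vhi_n,\psi_n)$ for~\eqref{DKP} exists, since the pair $(0,\min c)$ is admissible (the argument below will also show $\mathcal{T}_c^*(\mu,\nu)<\infty$).

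First I would improve $(\vhi_n,\psi_n)$ by replacing $\psi_n$ with $\vhi_n^c$ and then $\vhi_n$ with $(\vhi_n^c)^{\bar c}$. By Proposition~\ref{prop_ctransf}(ii)--(iii), each replacement increases the corresponding integrand pointwise and preserves the constraint $\vhi_n\oplus\psi_n\le c$, so the new sequence is again maximising; and it now enjoys three features: each $\vhi_n$ is a $\bar c$-transform, hence $c$-concave and $\om$-continuous; each $\psi_n$ is a $c$-transform, hence $\om$-continuous; and $\psi_n=\vhi_n^c$ (the inequality $\psi_n\le\vhi_n^c$ holds because $\vhi_n^c$ is the largest function compatible with $\vhi_n\oplus\cdot\le c$ by Proposition~\ref{prop_ctransf}(iii), and the reverse inequality follows from Proposition~\ref{prop_ctransf}(ii) together with the order-reversing property of the $c$-transform). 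Next I would normalise: since $\mu(X)=\nu(X)$, subtracting a constant from $\vhi_n$ and adding the same constant to $\psi_n$ leaves the objective unchanged and keeps the relation $\psi_n=\vhi_n^c$, so I may fix a point $x_0\in X$ and assume $\vhi_n(x_0)=0$. Then $\om$-continuity gives $\|\vhi_n\|_\infty\le\om(\diam(X))$, whence $\|\psi_n\|_\infty=\|\vhi_n^c\|_\infty\le\|c\|_\infty+\om(\diam(X))$, so both $(\vhi_n)$ and $(\psi_n)$ are uniformly bounded and equicontinuous.

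By the Arzel\`a--Ascoli theorem, along a subsequence $\vhi_n\to\vhi$ and $\psi_n\to\psi$ uniformly on $X$. The constraint $\vhi\oplus\psi\le c$ passes to the limit, and $\mu,\nu$ being finite, the objective converges; hence $(\vhi,\psi)$ attains the supremum in~\eqref{DKP}. Since uniform convergence passes through the $c$-transform (immediate from $|\xi^c-\eta^c|\le\|\xi-\eta\|_\infty$), one has $\psi=\lim\vhi_n^c=\vhi^c$ and $\vhi=\lim\vhi_n^{c\bar c}=\vhi^{c\bar c}$, and by Proposition~\ref{prop_ctransf}(ii) the latter means $\vhi$ is $c$-concave. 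Finally, for any $c$-concave $\vhi$ the pair $(\vhi,\vhi^c)$ is admissible for~\eqref{DKP}, so $\sup\{\int\vhi\,d\mu+\int\vhi^c\,d\nu:\vhi\ \text{$c$-concave}\}\le\mathcal{T}_c^*(\mu,\nu)$; the pair just constructed realises this supremum, which yields the displayed identity and the fact that it is attained. I expect the only points needing real care to be the bookkeeping that keeps $\psi_n=\vhi_n^c$ compatible with the normalisation $\vhi_n(x_0)=0$, and the verification that uniform convergence commutes with the $c$-transform; neither is a genuine obstacle once Proposition~\ref{prop_ctransf} is in hand.
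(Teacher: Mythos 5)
Your proof is correct and follows exactly the standard route (the one in~\cite{OTAM}, which the paper cites rather than reproves): replace the pair by $((\vhi^c)^{\bar c},\vhi^c)$, normalise by an additive constant using $\mu(X)=\nu(X)$, and conclude by equicontinuity and Arzel\`a--Ascoli. This is also the same compactness mechanism the paper itself deploys for the modified dual problem in Proposition~\ref{prop_ex_DPp}, so there is nothing to add.
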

	A pair of functions maximising~\eqref{DKP} is called a pair of Kantorovitch potentials.
	
	\section{Study of $\Ups$ in compact metric spaces}\label{Xcompact}
	
	Let $(X, d_X)$ be a compact metric space and let $c : X \times X \to \R$ be a continuous cost function. We endow $(X, d_X)$ with a measure $\lambda \in \mathcal{M}_+(X)$ such that $\lambda(X)>0$ and denote by $L^1(X)$ the set of $\R$-valued functions integrable with respect to $\lambda$. Given $f \in L^1(X)$, we define the set of admissible transport plans
	\[
	\PI_f :=\lt\{\gamma\in\mathcal{M}_+(X\times X) : \gamma_x = f,\, \gamma_y \le 1-f \rt\}
	\]
	and the primal problem
	\begin{equation}\label{PPXcomp}
		\Ups(f) :=\inf \lt\{\int c\,d\gamma : \gamma\in \PI_f\rt\}.
	\end{equation}
	Notice that $\PI_f$ is empty whenever $f$ does not satisfy $0 \leq f \leq 1$  or when $\smallint f\,d\lambda > \lambda(X)/2$.  In the other cases, there exists $g \in L^1(X)$ such that $g\ge0$, $f+g\le1$ and $\smallint g \, d\lambda=\smallint f \,d\lambda$. Thus,
	\[
	\gamma:= \frac{1}{\smallint f \, d\lambda}(f \, d\lambda)\otimes (g \,d\lambda)\ \in\PI_f,
	\]
	and $\PI_f$ is not empty. We now fix $0 < m \leq \lambda(X)/2$ and define
	\[
	L^1_m := \lt\{f \in L^1(X, [0,1]) : \int f\, \leq m \rt\}.
	\]
	Given $f \in L^1_m$ and $\vhi, \psi \in C(X)$, we set
	\begin{equation}\label{Kf}
		K_f(\vhi, \psi) := \int \lt(f \vhi + (1-f) \psi \rt) \, d\lambda
	\end{equation}
	and define the dual problem
	\begin{equation}\label{DPXcomp}
		\Ups^*(f):=\sup\lt\{K_f(\vhi, \psi) :  (\vhi, \psi) \in \Phi \rt\},
	\end{equation}
	where
	\[
	\Phi := \lt\{(\vhi, \psi) \in C(X) \times C(X), \, \psi \leq 0, \, \vhi \oplus \psi \leq c \rt\}.
	\]
	
	\medskip
	For the remainder of the section we fix $f \in L^1_m$. As in the classical theory of optimal transport, a simple  application of the direct method of Calculus of Variations shows that~\eqref{PPXcomp} admits a minimiser.
	
	\begin{proposition}\label{prop_exis_gamma_opt}
		Assume that $X$ is a compact metric space and that $c \in C(X \times X, \R)$. Then, the infimum in~\eqref{PPXcomp} is a minimum.
	\end{proposition}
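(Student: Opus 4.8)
The plan is to run the direct method of the Calculus of Variations, exactly as for the classical Kantorovich problem, the only new feature being the inequality constraint $\gamma_y\le 1-f$ on the second marginal. Fix $f\in L^1_m$; as observed just above the statement, $\PI_f$ is then non-empty, so $\Ups(f)<+\infty$. Let $(\gamma_n)_n\subset\PI_f$ be a minimising sequence, i.e. $\int c\,d\gamma_n\to\Ups(f)$. Each $\gamma_n$ has total mass $\gamma_n(X\times X)=(\gamma_n)_x(X)=\int f\,d\lambda$, a fixed finite number; since $X\times X$ is compact, the family $(\gamma_n)_n$ is automatically tight, so by Prokhorov's theorem I can extract a (non-relabelled) subsequence with $\gamma_n\st*\rightharpoonup\gamma$ for some $\gamma\in\mathcal{M}_+(X\times X)$.

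Next I would verify $\gamma\in\PI_f$. Narrow convergence on the product is preserved under taking marginals: testing against $\xi\in C(X)$ viewed as the function $(x,y)\mapsto\xi(x)$ on $X\times X$ gives $(\gamma_n)_x\st*\rightharpoonup\gamma_x$, and similarly $(\gamma_n)_y\st*\rightharpoonup\gamma_y$. Since $(\gamma_n)_x=f$ for all $n$, passing to the limit yields $\gamma_x=f$. For the second marginal I would only test against \emph{non-negative} $\xi\in C(X)$: from $(\gamma_n)_y\le 1-f$ we get $\int\xi\,d\gamma_y=\lim_n\int\xi\,d(\gamma_n)_y\le\int\xi\,(1-f)\,d\lambda$, and since this holds for every such $\xi$ it follows that $\gamma_y\le 1-f$. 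Hence $\gamma\in\PI_f$.

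To conclude, note that $c$ is continuous on the compact set $X\times X$, hence bounded, so $c\in C_b(X\times X)$; the defining property of narrow convergence then gives $\int c\,d\gamma=\lim_n\int c\,d\gamma_n=\Ups(f)$, so $\gamma$ realises the infimum in~\eqref{PPXcomp}. I do not expect any genuine obstacle here: the argument is entirely standard, and the one place that differs from the prescribed-marginals case --- closedness of the constraint set $\PI_f$ under narrow limits --- is dealt with as above by noting that $\{\mu\in\mathcal{M}_+(X):\mu\le(1-f)\,d\lambda\}$ is narrowly closed because it is cut out by the non-negative continuous test functions.
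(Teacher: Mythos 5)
Your argument is correct and is exactly the route the paper intends: the text states that Proposition~\ref{prop_exis_gamma_opt} follows from "a simple application of the direct method" and omits the details, which you supply accurately (compactness of bounded sequences in $\mathcal{M}_+(X\times X)$ for $X$ compact, stability of the marginal constraints under narrow limits via non-negative test functions, and continuity of $\gamma\mapsto\smallint c\,d\gamma$ since $c\in C_b(X\times X)$). No gaps.
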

	
	\begin{remark}\label{rem_KP_Ups}
		If we let $f \in L^1_m$, by Proposition~\ref{prop_exis_gamma_opt}, there exists $\gamma \in \mathcal{M}_+(X \times X)$ optimal for $\Ups(f)$. Notice that $\gamma$ solves the classical optimal transport problem from $f$ towards $g := \gamma_y$ defined by~\eqref{KP}. Moreover, we have the identity $\Ups(f) = \mathcal{T}_c(f,g)$.
	\end{remark}
	
	Let us now show that $\Ups^*(f)=\Ups(f)$ and that~\eqref{DPXcomp} admits a maximising pair $(\vhi,\psi)$. We first establish that we can reduce the set of competitors for~\eqref{DPXcomp}. To simplify the notation we denote by $\vhi^c_{\,-}$ the function $(\vhi^c)_- := \vhi^c \wedge 0$.
	
	\begin{lemma}\label{lem_Phi}
		Assume that $X$ is a compact metric space and that $c \in C(X \times X, \R)$. Then, there holds
		\begin{equation}\label{DPp}
			\Ups^*(f)=\sup\,\{K_f(\psi^{\bar c}, \psi) : \psi = \vhi^c_{\,-} \text{ for some } \vhi \in \Phi' \},
		\end{equation}
		
		\noindent
		where
		\begin{equation}\label{defPhiprime}
			\Phi' := \lt\{\vhi \in C(X), \, \vhi = (\vhi^c_{\,-})^{\bar c}, \; \max \vhi^c \geq 0 \rt\}.
		\end{equation}
	\end{lemma}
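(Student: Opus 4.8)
The plan is to prove the two inequalities between $\Ups^*(f)$ and the right-hand side $S$ of~\eqref{DPp} separately. The inequality $S\le\Ups^*(f)$ is the easy one: for $\vhi\in\Phi'$, setting $\psi:=\vhi^c_{\,-}$, the pair $(\psi^{\bar c},\psi)$ belongs to $\Phi$ — indeed $\psi\le 0$, $\psi^{\bar c}\oplus\psi\le c$ by the very definition of the $\bar c$-transform, and both $\psi$ and $\psi^{\bar c}$ are continuous and real-valued since $X$ is compact and $c$ is continuous, hence $\om$-continuous for its modulus of continuity $\om$, so Proposition~\ref{prop_ctransf}(i) applies. Thus $K_f(\psi^{\bar c},\psi)\le\Ups^*(f)$, and taking the supremum over $\vhi\in\Phi'$ gives $S\le\Ups^*(f)$.

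For the reverse inequality I would start from an arbitrary $(\vhi,\psi)\in\Phi$ and build $\tilde\vhi\in\Phi'$ such that, with $\psi_1:=\tilde\vhi^c_{\,-}$, one has $K_f(\psi_1^{\bar c},\psi_1)\ge K_f(\vhi,\psi)$; once $\tilde\vhi\in\Phi'$ we have $\psi_1^{\bar c}=\tilde\vhi$, so this produces a competitor for $S$ lying above $K_f(\vhi,\psi)$, and taking the supremum over $\Phi$ finishes the proof. The construction proceeds in three steps, none of which decreases $K_f$. Step~1 (normalisation): if $a:=\max\psi<0$, replace $(\vhi,\psi)$ by $(\vhi+a,\psi-a)$; this stays in $\Phi$ and $K_f$ changes by $a\big(2\smallint f\,d\lambda-\lambda(X)\big)\ge 0$, since $a<0$ and $\smallint f\,d\lambda\le m\le\lambda(X)/2$; afterwards $\max\psi=0$. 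Step~2 (raise $\psi$): set $\psi_1:=\vhi^c_{\,-}$. From $\vhi\oplus\psi\le c$ and $\psi\le 0$, Proposition~\ref{prop_ctransf}(iii) gives $\psi\le\vhi^c\wedge 0=\psi_1$, while $\vhi\oplus\psi_1\le\vhi\oplus\vhi^c\le c$, so $(\vhi,\psi_1)\in\Phi$ and $K_f(\vhi,\psi_1)\ge K_f(\vhi,\psi)$ because $1-f\ge 0$; moreover $\psi$ vanishes at some point (as $\max\psi=0$) and $\psi_1\ge\psi$, so $\max\psi_1=0$, i.e. $\max\vhi^c\ge 0$. Step~3 (raise $\vhi$): set $\tilde\vhi:=\psi_1^{\bar c}$; then $\vhi\le\psi_1^{\bar c}=\tilde\vhi$ by Proposition~\ref{prop_ctransf}(iii), $(\tilde\vhi,\psi_1)\in\Phi$, and $K_f(\tilde\vhi,\psi_1)\ge K_f(\vhi,\psi_1)$ because $f\ge 0$.

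It then remains to verify that $\tilde\vhi\in\Phi'$ and $\psi_1=\tilde\vhi^c_{\,-}$. Since $\tilde\vhi=\psi_1^{\bar c}$ satisfies $\tilde\vhi\oplus\psi_1\le c$, Proposition~\ref{prop_ctransf}(iii) yields $\psi_1\le\tilde\vhi^c$; hence $\max\tilde\vhi^c\ge\max\psi_1=0$ and, since $\psi_1\le 0$, $\tilde\vhi^c_{\,-}\ge\psi_1$. Conversely $\tilde\vhi\ge\vhi$ and the $c$-transform is order-reversing, so $\tilde\vhi^c\le\vhi^c$ and $\tilde\vhi^c_{\,-}\le\vhi^c_{\,-}=\psi_1$. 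Therefore $\tilde\vhi^c_{\,-}=\psi_1$, whence $(\tilde\vhi^c_{\,-})^{\bar c}=\psi_1^{\bar c}=\tilde\vhi$; together with $\max\tilde\vhi^c\ge 0$ this shows $\tilde\vhi\in\Phi'$, and $K_f\big((\tilde\vhi^c_{\,-})^{\bar c},\tilde\vhi^c_{\,-}\big)=K_f(\tilde\vhi,\psi_1)\ge K_f(\vhi,\psi)$, which completes the reverse inequality.

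I expect the only genuinely delicate point to be the need for the normalisation in Step~1: the two "raising" operations alone do not force $\max\vhi^c\ge 0$ (a pair with $\psi$ strictly negative everywhere could remain so, putting $\tilde\vhi$ outside $\Phi'$), and it is precisely the volume constraint $\smallint f\,d\lambda\le\lambda(X)/2$ that makes the constant shift $(\vhi,\psi)\mapsto(\vhi+a,\psi-a)$ raising $\max\psi$ to $0$ harmless for $K_f$. Everything else is bookkeeping with the order-reversing and largest-compatible-function properties of $c$- and $\bar c$-transforms recorded in Proposition~\ref{prop_ctransf}, together with compactness of $X$ to guarantee continuity and finiteness of all transforms involved.
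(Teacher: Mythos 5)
Your proof is correct and follows essentially the same route as the paper's: successive improvement of the pair using the order-reversing and largest-compatible-function properties of the $c$- and $\bar c$-transforms, with the constant shift made harmless by the constraint $2m\le\lambda(X)$. The only cosmetic differences are that you normalise $\max\psi=0$ at the outset rather than shifting $\vhi$ after the replacement $\psi\mapsto\vhi^c_{\,-}$, and that you verify $\tilde\vhi\in\Phi'$ directly instead of phrasing it as idempotence of the map $P(\vhi)=(\vhi^c_{\,-})^{\bar c}$.
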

	
	\medskip
	
	\begin{proof}~\\
		\textit{Step 1. We can replace $\psi$ by $\vhi^c_{\,-}$ and assume that $\max \vhi^c \geq 0$.}
		
		Let $(\vhi,\psi)\in \Phi$. By Proposition~\ref{prop_ctransf} $(iii)$, $\psi\le\vhi^c$, so that $\psi\le\vhi^c\wedge0=\vhi^c_{\,-}$. As $1-f\ge0$, $K_f(\vhi,\vhi^c_{\,-})\ge K_f(\vhi,\psi)$. Therefore, we can restrict the maximisation to the pairs $(\vhi,\vhi^c_{\,-})$ in the supremum~\eqref{DPXcomp}. Now, if $\max \vhi^c=-t<0$ we set $\widetilde \vhi:=\vhi-t$ so that ${\widetilde\vhi}^c=\vhi^c+t$. Consequently, $\max\widetilde\vhi^c=0$ and in particular, ${\widetilde\vhi}^c_{\,-}={\widetilde\vhi}^c$ so that $({\widetilde\vhi},{\widetilde\vhi}^c)\in\Phi$. We then compute
		\begin{align*}
			K_f(\widetilde\vhi,\widetilde\vhi^c_{\-})
			&=\int f(\widetilde\vhi -\widetilde\vhi^c)\,d\lambda+\int \widetilde\vhi^c\,d\lambda\\
			&\ge\int f(\vhi -\vhi^c)\,d\lambda+\int \vhi^c\,d\lambda + t(\lambda(X) -2m)\\
			&=K_f(\vhi,\vhi^c_{\,-})+ t(\lambda(X)-2m).
		\end{align*}
		
		\noindent
		As $2m \leq \lambda(X)$ we obtain $K_f(\widetilde\vhi,\widetilde\vhi^c_{\,-}) \geq K_f(\vhi,\vhi^c_{\,-})$. Hence
		\begin{equation*}
			\Ups^*(f)=\sup \lt\{K_f(\vhi,\vhi^c_{\,-}) : \vhi\in C(X), \, \max \vhi^c \geq 0 \rt\}.
			\smallskip
		\end{equation*}
		
		\noindent
		\textit{Step 2. There holds $\vhi = (\vhi^c_{\,-})^{\bar c}$.} 
		
		Let us introduce the mapping $P:C(X)\to C(X)$ defined by $P(\vhi):=(\vhi^c_{\,-})^{\bar c}$. For $\vhi\in C(X)$, $\vhi^c\ge\vhi^c_{\,-}$, so that $P(\vhi)=(\vhi^c_{\,-})^{\bar c}\ge \vhi^{c \bar c}$. By Proposition~\ref{prop_ctransf} $(ii)$, $\vhi^{c \bar c} \geq \vhi$, hence
		\begin{equation}\label{proof_lem_Phi_1}
			P(\vhi)\ge \vhi.
		\end{equation}
		
		\noindent
		By Proposition~\ref{prop_ctransf} $(ii)$ again (but applied to $\bar c c$ instead of $c \bar c$), there holds $P(\vhi)^c=(\vhi^c_{\,-})^{\bar cc} \ge \vhi^c_{\,-}$. Taking the negative part yields
		\begin{equation}\label{proof_lem_Phi_2}
			P(\vhi)^c_{\,-} \geq  \vhi^c_{\,-}.
		\end{equation}
		
		\noindent
		We deduce from~\eqref{proof_lem_Phi_1} and~\eqref{proof_lem_Phi_2} that 
		\[
		K_f(P(\vhi),P(\vhi)^c_{\,-}) \ge K_f(\vhi,\vhi^c_{\,-}).
		\]
		
		\noindent
		Now, we observe that if $\max \vhi^c \geq 0$ we also have $\max \vhi^c_{\,-} = 0$ and, by~\eqref{proof_lem_Phi_2}, $\max P(\vhi)^c_{\,-} = 0$ which implies that $\max {P(\vhi)^c} \geq 0$. Hence,
		\begin{equation}\label{proof_lem_Phi_25}
			\Ups^*(f)=\sup \lt\{K_f(\widetilde \vhi, \widetilde \vhi^c_{\,-}) : \widetilde\vhi\in C(X), \, \max \widetilde \vhi^c \geq 0,\, \widetilde \vhi=P(\vhi)\text{ for some } \vhi\in C(X) \rt\}.
		\end{equation}
		
		\noindent
		To conclude, we show that $P(P(\vhi)) = P(\vhi)$ for any $\vhi \in C(X)$. By~\eqref{proof_lem_Phi_1}, $P(P(\vhi)) \geq P(\vhi)$. Taking the $\bar c$-transform in~\eqref{proof_lem_Phi_2} yields $P(P(\vhi))\leq P(\vhi)$ and we have indeed $P(P(\vhi))=P(\vhi)$. Hence we have $\tilde \vhi\in\Phi'$ in~\eqref{proof_lem_Phi_25} and we get 
		\begin{equation}\label{proof_lem_Phi_3}
			\Ups^*(f) = \sup\lt\{ K_f (\tilde\vhi, \tilde\vhi^c_{\,-}) : \tilde\vhi \in \Phi' \rt\}.
		\end{equation}
		
		\noindent
		Finally, by definition $\tilde\vhi = (\tilde\vhi^c_{\,-})^{\bar c}$ for $\tilde\vhi \in \Phi'$ and~\eqref{DPp} follows from~\eqref{proof_lem_Phi_3} by letting $\psi := \tilde\vhi^c_{\,-}$.
	\end{proof}
	
	We can now establish that the supremum in~\eqref{DPp} is reached.
	
	\begin{proposition}\label{prop_ex_DPp}
		Assume that $X$ is a compact metric space and that $c \in C(X \times X, \R)$. Then, the set $\Phi'$ is compact in $(C(X), \|\cdot\|_\infty)$ and the suprema in~\eqref{DPp} and~\eqref{DPXcomp} are attained.
	\end{proposition}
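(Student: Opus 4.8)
The plan is to prove that $\Phi'$ is compact in $(C(X),\|\cdot\|_\infty)$ by the Arzel\`{a}--Ascoli theorem, mimicking the classical compactness argument for Kantorovich potentials, and then to maximise the (continuous) objective $\vhi\mapsto K_f(\vhi,\vhi^c_{\,-})$ over this compact set. Since $X\times X$ is compact and $c$ is continuous, $c$ is uniformly continuous, hence $\om$-continuous for some modulus $\om\in C(\R_+,\R_+)$ that is increasing with $\om(0)=0$. Fix $\vhi\in\Phi'$. By Proposition~\ref{prop_ctransf}$(i)$ the function $\vhi^c$ is $\om$-continuous; since $\xi\mapsto\xi_-=\xi\wedge0$ is $1$-Lipschitz pointwise, $\vhi^c_{\,-}$ is $\om$-continuous as well; applying the $\bar c$-version of Proposition~\ref{prop_ctransf}$(i)$, $\vhi=(\vhi^c_{\,-})^{\bar c}$ is $\om$-continuous. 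Hence every element of $\Phi'$ has modulus of continuity $\om$, so $\Phi'$ is equi-continuous.

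Next I would establish a uniform bound. Put $D:=\diam X<\infty$ and $M:=\|c\|_{L^\infty(X\times X)}<\infty$. From the $\om$-continuity of $\vhi^c$ we get $\max\vhi^c-\min\vhi^c\le\om(D)$, and the constraint $\max\vhi^c\ge0$ in~\eqref{defPhiprime} then forces $\min\vhi^c\ge-\om(D)$, whence $-\om(D)\le\vhi^c_{\,-}\le0$ on $X$. Writing $\vhi(x)=(\vhi^c_{\,-})^{\bar c}(x)=\inf_{y\in X}\{c(x,y)-\vhi^c_{\,-}(y)\}$ and using $-M\le c\le M$ together with $0\le-\vhi^c_{\,-}\le\om(D)$ yields $-M\le\vhi\le M+\om(D)$. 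Thus $\Phi'$ is bounded in $(C(X),\|\cdot\|_\infty)$. For closedness, recall that the $c$-transform, the $\bar c$-transform and $\xi\mapsto\xi_-$ are each $1$-Lipschitz for the sup-norm: indeed $\bigl|\inf_x\{c(x,\cdot)-\xi_1(x)\}-\inf_x\{c(x,\cdot)-\xi_2(x)\}\bigr|\le\|\xi_1-\xi_2\|_\infty$, and similarly for the other two. Consequently the map $P(\vhi):=(\vhi^c_{\,-})^{\bar c}$ and the map $\vhi\mapsto\max\vhi^c$ are continuous on $C(X)$. So if $\vhi_n\in\Phi'$ converges uniformly to $\vhi$, then $\vhi=\lim_n\vhi_n=\lim_nP(\vhi_n)=P(\vhi)$ and $\max\vhi^c=\lim_n\max\vhi_n^c\ge0$, that is, $\vhi\in\Phi'$. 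Being bounded, equi-continuous and closed, $\Phi'$ is compact by Arzel\`{a}--Ascoli.

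Finally I would deduce attainment of the suprema. For $\vhi\in\Phi'$, the function $\psi:=\vhi^c_{\,-}$ satisfies $\psi^{\bar c}=(\vhi^c_{\,-})^{\bar c}=\vhi$, so the objective in~\eqref{DPp} is exactly $\vhi\mapsto K_f(\vhi,\vhi^c_{\,-})$. The map $(\vhi,\psi)\mapsto K_f(\vhi,\psi)=\int(f\vhi+(1-f)\psi)\,d\lambda$ is continuous for the sup-norm, being controlled by $\|f\|_{L^1}\|\vhi\|_\infty+\|1-f\|_{L^1}\|\psi\|_\infty$, and $\vhi\mapsto\vhi^c_{\,-}$ is continuous; hence $\vhi\mapsto K_f(\vhi,\vhi^c_{\,-})$ is continuous on the compact set $\Phi'$ and attains its maximum at some $\vhi_*\in\Phi'$. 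By Lemma~\ref{lem_Phi}, $\Ups^*(f)=K_f(\vhi_*,(\vhi_*)^c_{\,-})$, so~\eqref{DPp} is attained. Moreover $(\vhi_*,(\vhi_*)^c_{\,-})\in\Phi$, since $(\vhi_*)^c_{\,-}\le0$ and $\vhi_*\oplus(\vhi_*)^c_{\,-}\le\vhi_*\oplus(\vhi_*)^c\le c$ by Proposition~\ref{prop_ctransf}$(iii)$; as this pair realises the value $\Ups^*(f)=\sup_{\Phi}K_f$, the supremum in~\eqref{DPXcomp} is attained as well. The one step needing genuine care is the uniform bound on $\Phi'$: the normalisation $\max\vhi^c\ge0$ is precisely what breaks the invariance $\vhi\mapsto\vhi+t$, $\vhi^c\mapsto\vhi^c-t$ and makes a $C^0$-bound possible; everything else is a routine equicontinuity-and-continuity argument.
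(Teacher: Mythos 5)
Your proof is correct and follows essentially the same route as the paper: equicontinuity of $\Phi'$ via the $\om$-continuity of $c$-transforms, a uniform bound extracted from the normalisation $\max\vhi^c\ge 0$, stability of the constraints defining $\Phi'$ under uniform limits (you phrase this via the $1$-Lipschitz continuity of the transforms, the paper via passing to the limit along a subsequence), and then Arzel\`a--Ascoli plus continuity of $\vhi\mapsto K_f(\vhi,\vhi^c_{\,-})$ to attain both suprema. The differences are purely presentational.
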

	\smallskip
	
	\begin{proof}~\\		
		Let us show that $\Phi'$ is compact. Let $\vhi_n$ be a sequence in $\Phi'$. The function $c$ is $\om$-continuous for some  modulus of continuity $\om\in C(\R_+,\R_+)$, so that by Proposition~\ref{prop_ctransf} $(i)$ for every $n \geq 0$,  $\vhi^c_n:=(\vhi_n)^c$ and ${\vhi_n^c}_-:=((\vhi_n)^c)_-$ are $\om$-continuous. By definition of $\Phi'$, $\vhi_n = ({\vhi_n^c}_-)^{\bar c}$, so that $\vhi_n$ is also $\om$-continuous for every $n \geq 0$. Let us show that the sequences $\vhi_n$ and ${\vhi_n^c}_-$ are uniformly bounded in $(C(X), \|\cdot\|_\infty)$. We observe that for every $n \geq 0$, $\max \vhi_n^c \geq 0$. In particular this implies $\max {\vhi_n^c}_-=0$. Denoting by $x_n$ a point of $X$ such that ${\vhi^c_n}_- (x_n) = 0$, by $\om$-continuity we have for $x \in X$ and $n \geq 0$,
		\[
		-\om(\diam(X))\leq -\om(d_X(x,x_n)) \leq  {\vhi_n^c}_-(x)-{\vhi_n^c}_-(x_n) = {\vhi_n^c}_-(x) \le 0.
		\]
		\noindent
		Thus the sequence ${\vhi_n^c}_-$ is uniformly bounded in $(C(X), \|\cdot\|_\infty)$. By definition of the $c$-transform
		\[
		\min_{X\times X} c -\max_X {\vhi_n^c}_- \le ({\vhi_n^c}_-)^{\bar c}  \le \max_{X\times X} c-\min_X {\vhi_n^c}_-.
		\] 
		
		\noindent
		Hence the sequence $\vhi_n$ is also uniformly bounded. By Arzel\'a-Ascoli's theorem, there exists a pair $(\vhi, \psi) \in C(X) \times C(X)$ such that, up to extraction of a subsequence, $(\vhi_n, {\vhi_n^c}_-)$ converges uniformly to $(\vhi, \psi)$.
		
		Let us show that $\vhi \in \Phi'$. By Proposition~\ref{prop_ctransf} $(iii)$ and by uniform convergence $\vhi_n^c \to \vhi^c$ as $n \to \infty$ so that
		\begin{equation}\label{ctransfunifconv}
			{\vhi_n^c}_- \to \vhi^c_{\,-} \quad \text{uniformly as } n \to \infty,
		\end{equation}
		
		\noindent
		which yields $\psi = \vhi^c_{\,-}$. From~\eqref{ctransfunifconv} and the uniform continuity of $c$, we deduce that 
		\[
		({\vhi_n^c}_-)^{\bar c} =\vhi_n \to (\vhi^c_{\,-})^{\bar c} \quad \text{ uniformly as } n \to \infty.
		\]
		
		\noindent
		Since $\vhi_n \to \vhi$ as $n \to \infty$, we obtain $\vhi = (\vhi^c_{\,-})^{\bar c}$. Lastly, by uniform convergence, the fact that $\max \vhi_n^c \geq 0$ for all $n \geq 0$ implies that $\max \vhi^c \geq 0$, so that $\vhi \in \Phi'$. This shows that $\Phi'$ is a compact subset of $(C(X), \|\cdot\|_\infty)$.
		
		\medskip
		
		Let now $\psi_n$ be a maximising sequence for~\eqref{DPp}. For all $n \geq 0$, there exists $\vhi_n \in \Phi'$ such that $\psi_n = {\vhi_n^c}_-$. By compactness of $\Phi'$, $\vhi_n \to \vhi$ as $n \to \infty$ for some $\vhi \in \Phi'$. Setting $\psi = \vhi^c_{\,-}$, we have $\psi_n \to \psi$ and $\psi_n^{\bar c} \to \psi^{\bar c}$ as $n \to \infty$. The functional $K_f$ being continuous with respect to uniform convergence, we obtain
		\[
		K_f(\psi^{\bar c}, \psi )=\lim K_f(\psi^{\bar c}_n, \psi_n) = \Ups^*(f).
		\]
		
		\noindent
		This proves that $\psi$ is a maximiser for~\eqref{DPp} and by Lemma~\ref{lem_Phi}, $\psi$ also maximises~\eqref{DPXcomp}.
	\end{proof}
	
	We are now ready to prove that there is no duality gap between~\eqref{PPXcomp} and~\eqref{DPXcomp}. The proof is an adaptation of~\cite[Section 1.6.3]{OTAM}.
	
	\begin{proposition}\label{prop_DP=OP}
		Assume that $X$ is a compact metric space and that $c \in C(X \times X, \R)$. Then,
		\begin{equation*}
			\Ups^*(f)= \Ups(f).
		\end{equation*}
	\end{proposition}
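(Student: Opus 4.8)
\emph{Strategy.} The plan is to apply the Fenchel--Rockafellar duality theorem, adapting the scheme used for the classical Kantorovich duality in~\cite[Section~1.6.3]{OTAM} to the presence of the sign constraint $\psi\leq 0$ and of the inequality constraint on the second marginal. Write $\mu:=f\,d\lambda$ and $\nu:=(1-f)\,d\lambda$, which are positive measures on $X$ with $\nu(X)-\mu(X)=\lambda(X)-2\smallint f\,d\lambda\geq 0$. I work in the Banach space $E:=C(X\times X)$ with the uniform norm, whose topological dual is $\mathcal M(X\times X)$, and define on $E$ the two functionals
\[
\mathcal A(u):=\begin{cases}0&\text{if }u\geq -c\text{ on }X\times X,\\ +\infty&\text{otherwise,}\end{cases}
\qquad
\mathcal B(u):=\inf\lt\{-\int\vhi\,d\mu-\int\psi\,d\nu:\ \vhi,\psi\in C(X),\ \psi\leq 0,\ \vhi\oplus\psi=-u\rt\}
\]
(with $\inf\emptyset:=+\infty$). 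Then $\mathcal A$ is convex, and $\mathcal B$ is convex and proper: it is the inf-projection, along the linear map $(\vhi,\psi)\mapsto-(\vhi\oplus\psi)$, of the convex functional $(\vhi,\psi)\mapsto -\smallint\vhi\,d\mu-\smallint\psi\,d\nu+\chi_{\{\psi\leq 0\}}$, and properness holds because along the only available degree of freedom $(\vhi,\psi)\mapsto(\vhi+a,\psi-a)$ the value changes by $a(\nu(X)-\mu(X))\geq 0$ with $a$ bounded below. Parametrising admissible $u$ as $u=-(\vhi\oplus\psi)$, so that $u\geq -c\iff\vhi\oplus\psi\leq c$, and using $\smallint\vhi\,d\mu=\smallint f\vhi\,d\lambda$, $\smallint\psi\,d\nu=\smallint(1-f)\psi\,d\lambda$, one gets
\[
\inf_{u\in E}\big(\mathcal A(u)+\mathcal B(u)\big)=-\sup\lt\{\int\big(f\vhi+(1-f)\psi\big)\,d\lambda:\ (\vhi,\psi)\in\Phi\rt\}=-\Ups^*(f).
\]

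\emph{Applying the duality.} To invoke Fenchel--Rockafellar I will use that the constant $u_0\equiv\|c\|_\infty+1$ satisfies $u_0>-c$, so $\mathcal A$ vanishes on a neighbourhood of $u_0$ (in particular is finite and continuous there), while $\vhi\equiv-(\|c\|_\infty+1)$, $\psi\equiv 0$ show $\mathcal B(u_0)<+\infty$. The theorem then yields
\[
-\Ups^*(f)=\max_{\gamma\in\mathcal M(X\times X)}\big(-\mathcal A^*(-\gamma)-\mathcal B^*(\gamma)\big),
\]
with the maximum attained, and it remains to identify the conjugates. For $\mathcal A$, $\mathcal A^*(-\gamma)=\sup_{u\geq -c}\big(-\smallint u\,d\gamma\big)$: if $\gamma\in\mathcal M_+$ the optimal choice is $u=-c$, giving $\mathcal A^*(-\gamma)=\smallint c\,d\gamma$, whereas if $\gamma$ has a nontrivial negative part a large nonnegative continuous perturbation of $-c$ supported near $\supp\gamma^-$ sends the value to $+\infty$. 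For $\mathcal B$, unfolding the infimum gives
\[
\mathcal B^*(\gamma)=\sup_{\vhi\in C(X)}\int\vhi\,d(\mu-\gamma_x)\ +\ \sup_{\psi\leq 0}\int\psi\,d(\nu-\gamma_y),
\]
the first term being $0$ if $\gamma_x=\mu$ and $+\infty$ otherwise, and the second being $0$ if $\nu-\gamma_y\in\mathcal M_+$ and $+\infty$ otherwise (if $\nu-\gamma_y$ fails to be a nonnegative measure, a continuous $\psi\leq 0$ taking large negative values on a compact set carrying its negative part makes the integral diverge, by the Hahn--Jordan decomposition and inner regularity). Hence $-\mathcal A^*(-\gamma)-\mathcal B^*(\gamma)$ equals $-\smallint c\,d\gamma$ when $\gamma\in\mathcal M_+(X\times X)$, $\gamma_x=f$ and $\gamma_y\leq 1-f$, i.e.\ $\gamma\in\PI_f$, and $-\infty$ otherwise, so
\[
-\Ups^*(f)=\max_{\gamma\in\PI_f}\lt(-\int c\,d\gamma\rt)=-\Ups(f),
\]
which is the claimed identity (and re-proves that $\Ups(f)$ is attained).

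\emph{Main obstacle.} I do not expect a deep difficulty: the work lies in checking carefully that $\mathcal B$ is a genuine proper convex functional, that the constraint-qualification hypothesis of Fenchel--Rockafellar holds, and in executing the two ``blow-up'' computations — for $\mathcal A^*(-\gamma)$ with $\gamma$ signed, and for $\sup_{\psi\leq 0}\smallint\psi\,d\rho$ with $\rho$ not nonnegative — which rely only on the Hahn--Jordan decomposition together with inner regularity of Radon measures on the compact space $X$. The one structural point to get right is that the sign constraint $\psi\leq 0$ is precisely what makes $\mathcal B^*$ encode the \emph{inequality} $\gamma_y\leq 1-f$ rather than an equality, which is why no hypothesis relating $\mu(X)$ and $\nu(X)$ beyond $2\smallint f\,d\lambda\leq\lambda(X)$ is required.
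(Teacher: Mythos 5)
Your proof is correct. Both arguments are convex-duality proofs and the final conjugate computations are essentially the same as the paper's: you correctly identify that the sign constraint $\psi\le 0$ is what turns the second-marginal condition into the inequality $\gamma_y\le 1-f$, and that the normalisation $2\smallint f\,d\lambda\le\lambda(X)$ is exactly what keeps your inf-projection $\mathcal B$ proper (via the one-parameter family $(\vhi+a,\psi-a)$). The genuine difference is in how the absence of a duality gap is certified. The paper follows~\cite[Section 1.6.3]{OTAM}: it introduces the perturbation function $H(p)$ obtained by replacing $c$ with $c-p$ in the dual problem, proves that $H$ is convex and lower semi-continuous (which requires re-running the Arzel\`a--Ascoli compactness argument of Proposition~\ref{prop_ex_DPp} for each perturbed cost along a converging sequence $p_n\to p$), and concludes from the Fenchel--Moreau identity $H(0)=H^{**}(0)$. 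You instead split the primal-side functional as $\mathcal A+\mathcal B$ on $C(X\times X)$ and invoke the Fenchel--Rockafellar theorem, whose hypothesis is the continuity of $\mathcal A$ at the common admissible point $u_0\equiv\|c\|_\infty+1$; this replaces the lower semi-continuity verification by an essentially trivial interior-point condition, and it has the added benefit that the attained maximum on the dual side re-proves existence of a minimiser of $\Ups(f)$, which the paper obtains separately in Proposition~\ref{prop_exis_gamma_opt}. The two ``blow-up'' computations you describe (forcing $\gamma\ge0$ and $\gamma_y\le 1-f$ via Hahn--Jordan plus regularity and Urysohn) are the same device the paper uses when computing $H^*$, so the approaches reconverge there.
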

	
	\begin{proof}~\\
		\textit{Step 1. Definition of $H$ and first properties.} 
		
		For $p\in C(X\times X)$, we define
		\[
		H(p):=-\sup\lt\{\int \lt(f\vhi+(1-f)\psi \rt) \,d\lambda : (\vhi,\psi) \in \Phi_p \rt\}
		\]
		
		\noindent
		where
		\[
		\Phi_p := \lt\{(\vhi,\psi) \in C(X) \times C(X),\ \psi\le0,\ \vhi\oplus\psi\le c-p \rt\}.
		\]
		
		\noindent
		We first observe that $c-p$ is continuous and bounded from below. Thus, by applying Proposition~\ref{prop_ex_DPp} with $c-p$ in place of $c$, we see that the above supremum is a maximum.
		
		Let us now show that $H$ is convex. Let $p_0,p_1\in C(X \times X)$ and $\theta\in[0,1]$ and let us set $p:=(1-\theta)p_0+\theta p_1$. We denote by $(\vhi_0,\psi_0)$ and $(\vhi_1,\psi_1)$ two maximising pairs associated with $p_0$ and $p_1$ and set $\vhi:=(1-\theta)\vhi_0+\theta \vhi_1$, $\psi:=(1-\theta)\psi_0+\theta \psi_1$. We see that $(\vhi,\psi)$ is an admissible pair ($\psi \leq 0$ and $\vhi \oplus \psi \leq c-p$), so that
		\[
		H(p)\le -\int \lt(f\vhi+(1-f)\psi\rt)\,d\lambda  = (1-\theta)H(p_0) +\theta H(p_1).
		\]
		
		\noindent
		This proves that $H$ is convex.

		Next, we establish that $H$ is lower semi-continuous in $(C(X\times X),\|\cdot\|_\infty)$. Let $p_n$ and $p$ be elements of $C(X\times X)$ such that $p_n\to p$ uniformly as $n \to \infty$. The sequence $c-p_n$ is uniformly equi-continuous. Therefore, proceeding as in the proof of Proposition~\ref{prop_ex_DPp}, there exists a sequence of uniformly bounded and equi-continuous admissible pairs $(\vhi_n, \psi_n)$ such that 
		\[
		H(p_n)=-\int \lt(f\vhi_n+(1-f)\psi_n\rt)\,d\lambda\qquad\text{for every }n \geq 0.
		\]
		
		\noindent
		We first extract a subsequence $p_{n'}$ such that $\lim_{n'} H(p_{n'})=\liminf_n H(p_n)$.
		By Arzelà-Ascoli's theorem, there exists $(\vhi, \psi) \in C(X) \times C(X)$ such that $\vhi_{n'}\to\vhi$ and $\psi_{n'}\to\psi$ uniformly as $n'\to \infty$. By pointwise convergence, $\psi\le0$ and $\vhi\oplus\psi\le c-p$. Passing to the limit yields
		\[
		H(p)\le -\int \lt(f\vhi+(1-f)\psi\rt)\,d\lambda  = -\lim_{n'} \int \lt(f\vhi_{n'}+(1-f)\psi_{n'}\rt)\,d\lambda = \liminf_n H(p_n).
		\]
		
		\noindent
		Thus $H$ is lower semi-continuous.
		
		\medskip
		
		\noindent
		\textit{Step 2. Absence of duality gap.}
		
		Since $H$ is convex and lower semi-continuous on the Banach space $(C(X\times X),\|\cdot\|_\infty)$, we have  $H(0)=H^{**}(0)$. Here, for a Banach space $\mathcal{X}$ and a function $F:\mathcal{X}\to\R\cup\{+\infty\}$, $F^*$ denotes the Legendre transform of $F$ defined on the topological dual $\mathcal{X}^*$ of $\mathcal{X}$ by
		\[
		F^*(x^*):=\sup \lt\{ x^*(x)-F(x) : x\in \mathcal{X}\rt\}.
		\]
		
		\noindent
		In particular,
		\begin{equation}\label{proof_prop_DP=OP_1}
			\Ups^*(f)=-H(0)=-H^{**}(0) = \inf\{ H^*(\gamma) : \gamma\in \mathcal{M}(X\times X)\}.
		\end{equation}
		
		\noindent
		We now compute $H^*$. Let $\gamma\in \mathcal{M}(X\times X)$. By definition,
		\begin{equation*}
			H^*(\gamma)=\sup_{p \in C(X\times X) } \lt\{\int p\,d\gamma + \sup_{(\vhi, \psi) 
				\in \Phi_p}\lt\{ \int \lt(f\vhi+(1-f)\psi\rt)\,d\lambda\rt\}\rt\}.
		\end{equation*}
		
		\noindent
		Let us first assume that there exists $q\in C(X\times X,\R_+)$ such that $t:=-\int q\,d\gamma >0$. We set $\vhi=\min c$, $\psi=0$ and $p_n:=-nq$ for $n\ge 1$. We obtain
		\[
		H^*(\gamma)\ge n t - m |\min c|\ \to \infty \qquad \text{ as } n \to \infty.
		\]
		
		\noindent
		Thus, when computing $H^*(\gamma)$, we may assume that $\gamma \geq 0$. We rewrite $H^*(\gamma)$ as 
		\begin{equation}\label{Hstar}
			\begin{aligned}
				H^*(\gamma) = \int c \, d\gamma + \sup_{p \in C(X \times X)} \sup_{(\vhi, \psi) \in \Phi_p} &\bigg\{\int (p-c + \vhi \oplus \psi) \, d\gamma \\ &+\int \vhi \, d (f \lambda - \gamma_x) + \int \psi \, d((1-f) \lambda - \gamma_y))\bigg\}.
			\end{aligned}
		\end{equation}
		
		\noindent
		Let us set
		\begin{equation*}
			G(\gamma) := \sup\lt\{\int \vhi \, d(f \lambda - \gamma_x) + \int \psi \, d((1-f)\lambda - \gamma_y) :
			(\vhi,\psi) \in C(X) \times C(X),\ \psi\le0 \rt\}.
		\end{equation*}
		
		\noindent
		On the one hand, given $(\vhi, \psi) \in \Phi_p$ and $\gamma \ge 0$, 
		\[
		\int (p - c + \vhi \oplus \psi)\,d\gamma \leq 0.
		\]
		
		\noindent
		Therefore, $H^*(\gamma) \leq \smallint c \, d \gamma + G(\gamma)$. On the other hand, given $(\vhi, \psi)$ admissible for $G(\gamma)$, setting $p = c - \vhi \oplus \psi$ yields the converse inequality thanks to~\eqref{Hstar}. Hence
		\begin{equation}\label{Hstar2}
			H^*(\gamma)=\int c\, d\gamma + G(\gamma).
		\end{equation}
		
		\noindent
		Given $\gamma \in \mathcal{M}_+(X \times X)$, we have $G(\gamma) = 0$ if $\gamma \in \PI_f$ and $G(\gamma) = +\infty$ otherwise. Combining this with~\eqref{Hstar2}, we obtain that for $\gamma \in \mathcal{M}(X \times X)$,
		\begin{equation*}
			H^*(\gamma)= \begin{cases} \ds\int c\, d\gamma &\text{if }\gamma\in \PI_f,\\
				+\infty&\text{in the other cases.}\end{cases}
		\end{equation*}
		
		\noindent
		Taking the infimum with respect to $\gamma\in \mathcal{M}(X\times X)$ and recalling~\eqref{proof_prop_DP=OP_1}, we get
		\[
		\Ups^*(f)= \inf \lt\{H^*(\gamma):\gamma\in \mathcal{M}(X\times X)\rt\} = \inf \lt\{\int c\,d\gamma :\gamma\in\PI_f\rt\}=\Ups(f),
		\]
		which concludes the proof.
	\end{proof}	
	
	\begin{remark}
		There is still no duality gap between~\eqref{PPXcomp} and~\eqref{DPXcomp} if we only assume $c$ to be lower semi-continuous. This result can be obtained by approximating $c$ pointwise from below by a non-decreasing sequence of continuous functions.
	\end{remark}
	
	In the remainder of the section, we focus on the properties of the potentials $(\psi^{\bar c}, \psi)$ maximising~\eqref{DPp}. We first show that the sign of $\psi^{\bar cc}$ enforces constraints on the local values of the marginals of any plan $\gamma$ optimal for $\Ups(f)$.
	
	\begin{proposition}\label{prop_Ups_OT}
		Assume that $X$ is a compact metric space and that $c \in C(X \times X, \R)$. Let  $(\psi^{\bar c}, \psi)$ be a maximiser of~\eqref{DPp} and let $\gamma$ be a minimiser of~\eqref{PPXcomp}. We set $g := \gamma_y$. Then, $\gamma$ is a minimiser of~\eqref{KP} with $(\mu, \nu) = (f,g)$ and $(\psi^{ \bar c}, \psi^{\bar c c})$ is a pair of Kantorovitch potentials realising the maximum in~\eqref{DKP}. Moreover, up to $\lambda$-negligible sets,
		\begin{equation}\label{psi_KP}
			f+g\equiv1\quad\text{on }\{\psi^{\bar cc} < 0 \}\qquad\text{and}\qquad g\equiv0\quad\text{on }\{\psi^{\bar cc}>0\}.
		\end{equation}
	\end{proposition}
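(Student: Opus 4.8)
The plan is to produce a chain of (in)equalities that starts and ends at $\Ups(f)$, forcing every intermediate inequality to be an equality, and then to read the three conclusions off the equality cases. First I would unwind the notation: writing $\psi = \vhi^c_{\,-}$ for the optimal $\vhi\in\Phi'$, the defining relation $\vhi = (\vhi^c_{\,-})^{\bar c}$ of $\Phi'$ gives $\psi^{\bar c} = \vhi$ and hence $\psi^{\bar cc} = \vhi^c$; in particular $\psi = \psi^{\bar cc}\wedge 0 = (\psi^{\bar cc})_-$, so that $\{\psi < 0\} = \{\psi^{\bar cc} < 0\}$, $\{\psi = 0\} = \{\psi^{\bar cc}\ge 0\}$, and $\psi^{\bar cc} - \psi = (\psi^{\bar cc})_+\ge 0$. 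I would also record that $(\psi^{\bar c},\psi^{\bar cc})$ is admissible for the dual problem $\mathcal{T}_c^*(f,g)$: both functions are continuous (Proposition~\ref{prop_ctransf}(i), since $c$ is $\om$-continuous on the compact $X\times X$), and $\psi^{\bar c}\oplus\psi^{\bar cc}\le c$ because $\psi^{\bar cc} = (\psi^{\bar c})^c$ is the largest function satisfying this constraint (Proposition~\ref{prop_ctransf}(iii)).

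Set $g := \gamma_y$. Since $\gamma\in\PI_f$ we have $0\le g\le 1-f$, so $g$ is absolutely continuous with respect to $\lambda$ (I identify $g$ with its density) and $\gamma\in\PI(f,g)$. The heart of the argument is the chain
\begin{align*}
\Ups(f) &= \int f\psi^{\bar c}\,d\lambda + \int (1-f)\psi\,d\lambda \;\le\; \int (f\psi^{\bar c} + g\psi)\,d\lambda \\
&\le\; \int (f\psi^{\bar c} + g\psi^{\bar cc})\,d\lambda \;\le\; \mathcal{T}_c(f,g) \;\le\; \int c\,d\gamma \;=\; \Ups(f).
\end{align*}
The opening equality combines Proposition~\ref{prop_DP=OP} ($\Ups(f) = \Ups^*(f)$) with the optimality of $(\psi^{\bar c},\psi)$ in~\eqref{DPp} (Lemma~\ref{lem_Phi}) and the formula~\eqref{Kf}. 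The first inequality uses $\psi\le 0$ and $0\le g\le 1-f$, hence $(1-f)\psi\le g\psi$; the second uses $g\ge 0$ and $\psi\le\psi^{\bar cc}$; the third is the admissibility of $(\psi^{\bar c},\psi^{\bar cc})$ combined with Kantorovitch duality $\mathcal{T}_c^*(f,g) = \mathcal{T}_c(f,g)$ (valid since $c$ is continuous, hence lower semicontinuous and bounded, on the compact $X\times X$); the fourth holds because $\gamma\in\PI(f,g)$; and the closing equality is the optimality of $\gamma$ for $\Ups(f)$.

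All the inequalities in the chain are therefore equalities. From $\mathcal{T}_c(f,g) = \int c\,d\gamma$ with $\gamma\in\PI(f,g)$ I conclude that $\gamma$ solves~\eqref{KP} with $(\mu,\nu) = (f,g)$; from $\int (f\psi^{\bar c} + g\psi^{\bar cc})\,d\lambda = \mathcal{T}_c(f,g)$ together with the admissibility noted above I conclude that $(\psi^{\bar c},\psi^{\bar cc})$ is a pair of Kantorovitch potentials realising the maximum in~\eqref{DKP}. Equality in the first inequality reads $\int (1-f-g)\psi\,d\lambda = 0$ with a non-positive integrand, so $(1-f-g)\psi = 0$ $\lambda$-a.e.; since $\{\psi < 0\} = \{\psi^{\bar cc} < 0\}$ this gives $f + g \equiv 1$ on $\{\psi^{\bar cc} < 0\}$. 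Equality in the second reads $\int g(\psi^{\bar cc}-\psi)\,d\lambda = 0$ with a non-negative integrand, so $g(\psi^{\bar cc})_+ = 0$ $\lambda$-a.e., i.e. $g\equiv 0$ on $\{\psi^{\bar cc} > 0\}$.

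There is no serious obstacle once the chain is in place — it is the standard mechanism by which absence of a duality gap enforces complementary slackness. The only delicate bookkeeping, and the likeliest place to slip, is the identification $\psi^{\bar c} = \vhi$, $\psi = (\psi^{\bar cc})_-$ coming from the membership $\vhi\in\Phi'$: this is exactly what makes the two slack terms in the chain constant-sign integrands, which in turn is what upgrades the two middle equalities to the pointwise relations in~\eqref{psi_KP}.
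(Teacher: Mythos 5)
Your proposal is correct and follows essentially the same route as the paper: both arguments hinge on the identity $\Ups(f)=\Ups^*(f)=K_f(\psi^{\bar c},\psi)=\mathcal{T}_c^*(f,g)=\mathcal{T}_c(f,g)$ together with the admissibility of $(\psi^{\bar c},\psi^{\bar cc})$ and the relation $\psi=(\psi^{\bar cc})_-$, which reduces the slack to the two constant-sign integrands $(1-f-g)(\psi^{\bar cc})_-$ and $g(\psi^{\bar cc})_+$ whose vanishing gives~\eqref{psi_KP}. The only (harmless) difference is that you recover the optimality of $\gamma$ for $\mathcal{T}_c(f,g)$ from the equality chain itself, whereas the paper invokes Remark~\ref{rem_KP_Ups} upfront.
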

	\smallskip
	
	\begin{proof}~\\
		By Remark~\ref{rem_KP_Ups}, $\gamma$ realises the minimum in~\eqref{KP} and $\Ups(f)= \mathcal{T}_c(f,g)$. As there is no duality gap in~\eqref{PPXcomp} nor in~\eqref{KP}, $\Ups^*(f) = \mathcal{T}_c^*(f,g)$. Additionally, $(\psi^{\bar c}, \psi^{\bar cc})$ is admissible for $\mathcal{T}_c^*(f,g)$ and $\psi = (\psi^{\bar cc})_-$. Thus
		\begin{equation}\label{proof_psi_KP}
			K_f(\psi^{\bar c}, \psi)=\int f\psi^{\bar c}\, d\lambda + \int (1-f)(\psi^{\bar cc})_-\, d\lambda = \mathcal{T}_c^*(f,g) \geq  \int f \psi^{\bar c}\, d\lambda + \int  g \psi^{\bar cc}\, d\lambda.
		\end{equation}
		
		\noindent
		Hence
		\[
		\int (1-f-g) (\psi^{\bar cc})_-\, d\lambda \ge \int  g (\psi^{\bar cc})_+\,d\lambda.
		\]
		
		\noindent
		Since $(1-f-g) (\psi^{\bar cc})_-\le0$ and $g (\psi^{\bar cc})_+ \geq 0$, the integrands must vanish $\lambda$-almost everywhere: we deduce~\eqref{psi_KP}. Additionally, the inequality in~\eqref{proof_psi_KP} is an equality. Consequently, $(\psi^{\bar c}, \psi^{\bar cc})$ is a pair of Kantorovitch potentials for~\eqref{DKP}.
	\end{proof}
	
	To end this section, we establish a comparison principle on the potentials maximising~\eqref{DPp}. We say that a set $\Psi \subset C(X)$ admits a minimal (respectively maximal) element for the relation $\leq$ if there exists $\psi_0 \in \Psi$ such that for any $\psi \in \Psi$, $\psi_0 \leq \psi$ (respectively $\psi_0 \geq \psi$).
	
	\begin{proposition}\label{prop_pot_monot}
		Assume that $X$ is a compact metric space and that $c \in C(X \times X, \R)$. Let $f \in L^1_m$ and let us define
		\[
		\Psi_f:=\lt\{\psi, \, \psi = \vhi^c_{\,-} \text{ for some } \vhi \in \Phi' \text{ and } K_f(\psi^{\bar c}, \psi)=\Ups^*(f)\rt\},
		\]
		
		\noindent
		where $K_f$ is defined in~\eqref{Kf} and $\Phi'$ in Lemma~\ref{lem_Phi}.
		
		\noindent
		Then:
		\begin{enumerate}[(i)]
			\item $\Psi_f$ admits a maximal element for the relation $\leq$, denoted by $\psi_f$ in the sequel,
			\item For $f_1, f_2 \in L^1_m$, there holds  $f_1 \leq f_2\implies\psi_{f_1}\geq \psi_{f_2}$.
		\end{enumerate}
	\end{proposition}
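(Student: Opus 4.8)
The plan is to derive both items from a single \emph{lattice principle} for the maximisers of~\eqref{DPp}. Fix $f_1\le f_2$ in $L^1_m$, let $\psi_1$ maximise~\eqref{DPp} for $f_1$ and let $\psi_2$ maximise it for $f_2$, and set $\vhi_i:=\psi_i^{\bar c}$, so that $\vhi_i\in\Phi'$ and $\psi_i=(\vhi_i^c)_-$. Define
\[
\alpha:=(\vhi_1\wedge\vhi_2)^c_{\,-},\qquad \beta:=(\vhi_1\vee\vhi_2)^c_{\,-}.
\]
The goal is to show that $\alpha$ maximises~\eqref{DPp} for $f_1$ (with $\alpha\ge\psi_1\vee\psi_2$) and $\beta$ maximises it for $f_2$ (with $\beta=\psi_1\wedge\psi_2$). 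Two elementary facts are used throughout: the $c$- and $\bar c$-transforms reverse order and satisfy $(\vhi_1\vee\vhi_2)^c=\vhi_1^c\wedge\vhi_2^c$ and $(\vhi_1\wedge\vhi_2)^c\ge\vhi_1^c\vee\vhi_2^c$ (and symmetrically for $\bar c$); and taking negative parts commutes with $\vee$ and $\wedge$. These give $\beta=(\vhi_1^c\wedge\vhi_2^c)_-=\psi_1\wedge\psi_2$ and $\alpha\ge(\vhi_1^c\vee\vhi_2^c)_-=\psi_1\vee\psi_2$. Recalling from the proof of Lemma~\ref{lem_Phi} that $P(\vhi):=(\vhi^c_{\,-})^{\bar c}$ satisfies $P(\vhi)\ge\vhi$, $P(\vhi)^c_{\,-}=\vhi^c_{\,-}$ (the bound ``$\ge$'' is in that proof, ``$\le$'' follows from $P(\vhi)\ge\vhi$), $P\circ P=P$, and preserves the condition $\max\vhi^c\ge0$, one checks that $P(\vhi_1\wedge\vhi_2)$ and $P(\vhi_1\vee\vhi_2)$ belong to $\Phi'$ and generate $\alpha$ and $\beta$ respectively; thus $\alpha,\beta$ are admissible in~\eqref{DPp}, and since $(\alpha^{\bar c},\alpha),(\beta^{\bar c},\beta)\in\Phi$ we automatically have $K_{f_1}(\alpha^{\bar c},\alpha)\le\Ups^*(f_1)$ and $K_{f_2}(\beta^{\bar c},\beta)\le\Ups^*(f_2)$.

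The heart of the matter is the reverse, super-additive inequality
\[
K_{f_1}(\alpha^{\bar c},\alpha)+K_{f_2}(\beta^{\bar c},\beta)\ \ge\ \Ups^*(f_1)+\Ups^*(f_2).
\]
Since $\alpha^{\bar c}=P(\vhi_1\wedge\vhi_2)\ge\vhi_1\wedge\vhi_2$, $\beta^{\bar c}=P(\vhi_1\vee\vhi_2)\ge\vhi_1\vee\vhi_2$ and $f_i\ge0$, it is enough to bound $\int f_1(\vhi_1\wedge\vhi_2)+\int(1-f_1)\alpha+\int f_2(\vhi_1\vee\vhi_2)+\int(1-f_2)\beta$ from below by $\Ups^*(f_1)+\Ups^*(f_2)=\int f_1\vhi_1+\int(1-f_1)\psi_1+\int f_2\vhi_2+\int(1-f_2)\psi_2$. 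Using $\vhi_1\wedge\vhi_2-\vhi_1=-(\vhi_1-\vhi_2)_+$ and $\vhi_1\vee\vhi_2-\vhi_2=(\vhi_1-\vhi_2)_+$, the difference of these two quantities equals
\[
\int(f_2-f_1)(\vhi_1-\vhi_2)_+\ +\ \int(1-f_1)(\alpha-\psi_1)\ +\ \int(1-f_2)(\beta-\psi_2).
\]
The first integral is nonnegative because $f_1\le f_2$. For the other two, the decisive point is the pointwise inequality $\alpha-\psi_1\ge(\psi_1\vee\psi_2)-\psi_1=\psi_2-(\psi_1\wedge\psi_2)=\psi_2-\beta\ge0$; since moreover $1-f_1\ge1-f_2\ge0$, we get $\int(1-f_1)(\alpha-\psi_1)\ge\int(1-f_1)(\psi_2-\beta)\ge\int(1-f_2)(\psi_2-\beta)=-\int(1-f_2)(\beta-\psi_2)$, so the sum of the last two integrals is nonnegative. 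This proves the displayed inequality; combined with the two ``$\le$'' bounds above it forces $K_{f_1}(\alpha^{\bar c},\alpha)=\Ups^*(f_1)$ and $K_{f_2}(\beta^{\bar c},\beta)=\Ups^*(f_2)$, that is $\alpha\in\Psi_{f_1}$ and $\beta\in\Psi_{f_2}$, which is the lattice principle.

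Item (i) now follows by taking $f_1=f_2=f$: the principle shows $\Psi_f$ is stable under the operation $(\psi_1,\psi_2)\mapsto\alpha$ with $\alpha\ge\psi_1\vee\psi_2$, hence $\Psi_f$ is upward directed; and $\Psi_f$, being the image under the continuous map $\vhi\mapsto\vhi^c_{\,-}$ of the closed subset $\{\vhi\in\Phi':K_f(\vhi,\vhi^c_{\,-})=\Ups^*(f)\}$ of the compact set $\Phi'$ (Proposition~\ref{prop_ex_DPp}, which also gives $\Psi_f\neq\emptyset$), is a nonempty compact, hence equicontinuous, subset of $(C(X),\|\cdot\|_\infty)$. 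A nonempty, compact, upward-directed family of this kind has a maximal element: the pointwise supremum $\psi_f:=\sup_{\psi\in\Psi_f}\psi$ is continuous, and covering the compact $X$ by finitely many balls on which, by equicontinuity, some element of $\Psi_f$ is within $\eps$ of $\psi_f$, then using directedness to dominate those finitely many elements by a single $\psi\in\Psi_f$, one obtains an element of $\Psi_f$ with $\|\psi-\psi_f\|_\infty$ arbitrarily small; closedness of $\Psi_f$ then yields $\psi_f\in\Psi_f$, and $\psi_f$ is plainly its maximum. For item (ii), apply the principle with $\psi_1:=\psi_{f_1}$ and $\psi_2:=\psi_{f_2}$: it produces $\alpha\in\Psi_{f_1}$ with $\alpha\ge\psi_{f_1}$, so maximality of $\psi_{f_1}$ forces $\alpha=\psi_{f_1}$, and since also $\alpha\ge\psi_{f_2}$ we conclude $\psi_{f_1}\ge\psi_{f_2}$.

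The main obstacle is the super-additive inequality of the lattice principle. It is what compels us to carry $\alpha$ and $\beta$ along simultaneously: I do not expect either one alone to be provably optimal, since bounding $K_{f_1}(\alpha^{\bar c},\alpha)$ by itself would require comparing $(\vhi_1-\vhi_2)_+$ with $(\psi_2-\psi_1)_+$, which has no reason to hold, whereas in the sum the two $(\vhi_1-\vhi_2)_+$ contributions combine with the favourable sign. The inequality hinges on a rather tight alignment of signs — the transform turning $\vee$ into $\wedge$ exactly but $\wedge$ only into a lower bound for $\vee$, the commutation of negative parts, and the pairing of the monotonicity $1-f_1\ge1-f_2$ with the ordering $\alpha-\psi_1\ge\psi_2-\beta\ge0$. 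The remaining points — checking that $\alpha,\beta$ arise from genuine elements of $\Phi'$ via the projector $P$, and making the compactness/directedness argument in (i) precise enough that $\psi_f$ dominates every competitor everywhere rather than merely $\lambda$-a.e. — are routine.
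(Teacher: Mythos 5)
Your proof is correct, and at its core it is the paper's argument in a symmetrised packaging. The paper's key step is a preliminary claim that is exactly the useful half of your lattice principle, namely $\vhi_1\wedge\vhi_2\in\Phi_{f_1}$ (equivalently $\alpha\in\Psi_{f_1}$): it proves $\Delta_K:=K_{f_1}(\vhi_1\wedge\vhi_2,\alpha)-K_{f_1}(\vhi_1,\psi_1)\ge 0$ by writing $f_1=f_2+(f_1-f_2)$ and invoking the optimality of $\vhi_2$ for $f_2$ against $\vhi_1\vee\vhi_2$ — which is precisely your super-additive inequality with the two brackets merged into one. The algebraic inputs are identical: $(\vhi_1\vee\vhi_2)^c=\vhi_1^c\wedge\vhi_2^c$, $(\vhi_1\wedge\vhi_2)^c\ge\vhi_1^c\vee\vhi_2^c$, commutation of negative parts with $\vee,\wedge$, and the pairing of $f_1\le f_2$ with the sign of the excesses. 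For (i), the paper constructs the pointwise infimum of $\Phi_f$ from a countable dense subset and upgrades pointwise to uniform convergence by compactness, while you construct the pointwise supremum of $\Psi_f$ via directedness, equicontinuity and a finite covering; the two constructions are equivalent, and both yield a genuine everywhere-maximal element. Two minor remarks. First, your assertion that $\beta$ is admissible in~\eqref{DPp}, i.e. that $P(\vhi_1\vee\vhi_2)\in\Phi'$, would require $\max(\vhi_1^c\wedge\vhi_2^c)\ge 0$, which does not obviously follow from $\max\vhi_i^c\ge 0$ (the maxima may be attained at different points); this is harmless, since your squeeze only uses $(\beta^{\bar c},\beta)\in\Phi$, which is automatic from Proposition~\ref{prop_ctransf}~(iii), and the membership $\beta\in\Psi_{f_2}$ is never used in (i) or (ii). Second, in (ii) you do not need $\alpha=\psi_{f_1}$: the chain $\psi_{f_1}\ge\alpha\ge\psi_{f_1}\vee\psi_{f_2}\ge\psi_{f_2}$ already concludes.
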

	\smallskip	
	
	\begin{proof}~\\
		\textit{Step 1. Sufficient condition and preliminary claim.}
		
		Notice that $\Psi_f$ is not empty by Proposition~\ref{prop_ex_DPp}. To obtain $(i)$, we prove that the set (which is not empty since $\Psi_f$ is not empty)
		\[
		\Phi_f := \lt\{\vhi \in \Phi', \, K_f(\vhi, \vhi^c_{\,-}) = \Ups^*(f)\rt\}
		\]
		
		\noindent
		admits a minimal element $\vhi_f$ and then $\psi_f := (\vhi^c_f)_-$ is the desired maximal element of $\Psi_f$. Let us make a preliminary observation.
		\begin{claim*}
			Let $f_1, f_2 \in L^1_m$ with $f_1 \leq f_2$ and set $\vhi_i \in \Phi_{f_i}$ for $i \in \{1, 2\}$. Then $\vhi_{\wedge} := \vhi_1 \wedge \vhi_2 \in \Phi_{f_1}$.
		\end{claim*}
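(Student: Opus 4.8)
\medskip

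The plan is to run a lattice (``submodularity'') argument, pairing $\vhi_\wedge:=\vhi_1\wedge\vhi_2$ with $f_1$ against $\vhi_\vee:=\vhi_1\vee\vhi_2$ paired with $f_2$. I would first record how the $c$-transform interacts with $\wedge$ and $\vee$. Since $X$ is compact and $c$ continuous, all the transforms below are real valued, and from $\inf_x\min(a(x),b(x))=\min(\inf_xa,\inf_xb)$ (and the corresponding one-sided inequality for $\max$) one gets
\[
(\vhi_1\vee\vhi_2)^c=\vhi_1^c\wedge\vhi_2^c,\qquad (\vhi_1\wedge\vhi_2)^c\geq\vhi_1^c\vee\vhi_2^c,\qquad \psi_1^{\bar c}\wedge\psi_2^{\bar c}=(\psi_1\vee\psi_2)^{\bar c}.
\]
Writing $\psi_i:=(\vhi_i^c)_-$, so that $\psi_i\leq0$ and $\vhi_i=\psi_i^{\bar c}$ because $\vhi_i\in\Phi'$, and using $(a\vee b)_-=a_-\vee b_-$ and $(a\wedge b)_-=a_-\wedge b_-$, these give $(\vhi_\vee^c)_-=\psi_1\wedge\psi_2$, $(\vhi_\wedge^c)_-\geq\psi_1\vee\psi_2$, and $\vhi_\wedge=(\psi_1\vee\psi_2)^{\bar c}$.

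Next, set $\psi_\wedge:=(\vhi_\wedge^c)_-$ and $\psi_\vee:=(\vhi_\vee^c)_-=\psi_1\wedge\psi_2$. The heart of the proof is the inequality
\[
K_{f_1}(\vhi_\wedge,\psi_\wedge)+K_{f_2}(\vhi_\vee,\psi_\vee)\ \geq\ \Ups^*(f_1)+\Ups^*(f_2),
\]
the right-hand side being $K_{f_1}(\vhi_1,\psi_1)+K_{f_2}(\vhi_2,\psi_2)$ since $\vhi_i\in\Phi_{f_i}$. I would obtain it from two pointwise estimates, after regrouping the $f_1$- and $f_2$-integrals. For the $\vhi$-part: on $\{\vhi_1\leq\vhi_2\}$ one has $f_1\vhi_\wedge+f_2\vhi_\vee=f_1\vhi_1+f_2\vhi_2$, while on $\{\vhi_1>\vhi_2\}$ the difference equals $(f_2-f_1)(\vhi_1-\vhi_2)\geq0$ precisely because $f_1\leq f_2$; hence $f_1\vhi_\wedge+f_2\vhi_\vee\geq f_1\vhi_1+f_2\vhi_2$ everywhere. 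For the $\psi$-part: using $1-f_1\geq1-f_2\geq0$, $\psi_\wedge\geq\psi_1\vee\psi_2$ and $\psi_\vee=\psi_1\wedge\psi_2$, one bounds $(1-f_1)\psi_\wedge+(1-f_2)\psi_\vee\geq(1-f_1)(\psi_1\vee\psi_2)+(1-f_2)(\psi_1\wedge\psi_2)$, and the latter dominates $(1-f_1)\psi_1+(1-f_2)\psi_2$ by the same two-case computation (equality on $\{\psi_1\geq\psi_2\}$, difference $(f_2-f_1)(\psi_2-\psi_1)\geq0$ on $\{\psi_1<\psi_2\}$). Integrating and adding yields the display.

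On the other hand $(\vhi_\wedge,\psi_\wedge)$ and $(\vhi_\vee,\psi_\vee)$ both lie in $\Phi$ (their second entries $\psi$ satisfy $\psi\leq0$ and $\psi\leq\vhi^c$, whence $\vhi\oplus\psi\leq\vhi\oplus\vhi^c\leq c$), so $K_{f_1}(\vhi_\wedge,\psi_\wedge)\leq\Ups^*(f_1)$ and $K_{f_2}(\vhi_\vee,\psi_\vee)\leq\Ups^*(f_2)$. Adding these and comparing with the previous display forces both to be equalities, in particular $K_{f_1}(\vhi_\wedge,(\vhi_\wedge^c)_-)=\Ups^*(f_1)$. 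It then remains to check $\vhi_\wedge\in\Phi'$. On one hand $\vhi_\wedge^c\geq\vhi_1^c$ gives $\max\vhi_\wedge^c\geq\max\vhi_1^c\geq0$. On the other, from $(\vhi_\wedge^c)_-\geq\psi_1\vee\psi_2$ and $\vhi_\wedge=(\psi_1\vee\psi_2)^{\bar c}$ we get $P(\vhi_\wedge)=((\vhi_\wedge^c)_-)^{\bar c}\leq(\psi_1\vee\psi_2)^{\bar c}=\vhi_\wedge$, while $P(\vhi_\wedge)\geq\vhi_\wedge$ always (as established in the proof of Lemma~\ref{lem_Phi}); hence $\vhi_\wedge=P(\vhi_\wedge)\in\Phi'$, and therefore $\vhi_\wedge\in\Phi_{f_1}$.

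The two points requiring genuine care are: $(a)$ identifying the correct cross-pairing so that the two pointwise estimates both point in the needed direction — this is exactly where the hypothesis $f_1\leq f_2$ is used, and swapping the pairing would reverse one of the inequalities; and $(b)$ verifying that $\vhi_\wedge$ remains a fixed point of $P$, which hinges on the identity $\vhi_1\wedge\vhi_2=(\psi_1\vee\psi_2)^{\bar c}$ together with $\psi_1\vee\psi_2\leq0$. Everything else is routine manipulation of $c$-transforms and negative parts on the compact space $X$.
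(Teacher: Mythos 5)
Your proof is correct and follows essentially the same route as the paper's: the same lattice identities for the $c$-transform ($(\vhi_1\vee\vhi_2)^c=\vhi_1^c\wedge\vhi_2^c$, $(\vhi_1\wedge\vhi_2)^c\geq\vhi_1^c\vee\vhi_2^c$), the same cross-pairing of $\vhi_\wedge$ with $f_1$ and $\vhi_\vee$ with $f_2$ exploiting $f_1\leq f_2$, and the same verification that $\vhi_\wedge$ is a fixed point of $P$. The only difference is presentational: you phrase the optimality step as a symmetric two-sided inequality forcing equality in the sum, whereas the paper bounds $\Delta_K=K_{f_1}(\vhi_\wedge,\cdot)-K_{f_1}(\vhi_1,\cdot)$ from below directly by substituting $f_1=f_2+(f_1-f_2)$ and invoking the optimality of $\vhi_2$ against $\vhi_\vee$; both rest on identical ingredients.
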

		
		Let us first prove that $\vhi_{\wedge} \in \Phi'$. In the sequel we write $\vhi_i^c:=(\vhi_i)^c$ and ${\vhi_i^c}_-:=((\vhi_i)^c)_-$ for $i\in\{1,2,\wedge\}$. We observe that $\vhi_{\wedge} \in C(X)$. By definition of the $c$-transform, we obtain
		\begin{equation}\label{ctransfwedgegeq}
			\vhi_{\wedge}^c = (\vhi_1 \wedge \vhi_2)^c \geq \vhi_1^c \vee \vhi_2^c
		\end{equation}
		
		\noindent
		and
		\begin{equation}\label{ctransfwedgeq}
			(\vhi_1 \vee \vhi_2)^c = \vhi_1^c \wedge \vhi_2^c.
		\end{equation}
		
		\noindent
		Since for $i \in \{1, 2\}$, $\max \vhi_i^c \geq 0$ we have by~\eqref{ctransfwedgegeq} that $\max \vhi_{\wedge}^c \geq 0$. 
		
		We now prove that $({\vhi_{\wedge}^c}_-)^{\bar c} = \vhi_{\wedge}$. We observe that ${\vhi_{\wedge}^c}_- \leq \vhi_{\wedge}^c$ which implies $({\vhi_{\wedge}^c}_-)^{\bar c} \geq \vhi_{\wedge}^{c \bar c}$. By Proposition~\ref{prop_ctransf} $(ii)$, $\vhi_{\wedge}^{c \bar c} \geq \vhi_{\wedge}$ so that $({\vhi_{\wedge}^c}_-)^{\bar c} \geq \vhi_{\wedge}$. Conversely, taking the negative part of~\eqref{ctransfwedgegeq}, we have ${\vhi_{\wedge}^c}_- \geq (\vhi_1^c \vee \vhi_2^c)_- = {\vhi_1^c}_- \vee {\vhi_2^c}_-$. Taking the $\bar c$-transform and using~\eqref{ctransfwedgeq} (with $\bar c$ instead of $c$) yields
		\[
		({\vhi_{\wedge}^c}_-)^{\bar c} \leq ({\vhi_1^c}_- \vee {\vhi_2^c}_-)^{\bar c} = ({\vhi_1^c}_-)^{\bar c} \wedge ({\vhi_2^c}_-)^{\bar c} 
		= \vhi_1 \wedge \vhi_2 = \vhi_{\wedge}.
		\]
		
		\noindent
		Hence $({\vhi_{\wedge}^c}_-)^{\bar c} = \vhi_{\wedge}$ and $\vhi_{\wedge} \in \Phi'$. 
		
		We now show that the pair $(\vhi_{\wedge}, {\vhi_{\wedge}^c}_-)$ maximises $\Ups^*(f_1)$. We set 
		\[
		\Delta_K := K_{f_1}(\vhi_{\wedge}, {\vhi_{\wedge}^c}_-)-K_{f_1}(\vhi_1, {\vhi_1^c}_-) = \int f_1 (\vhi_{\wedge}-\vhi_1)\, + \int (1-f_1)({\vhi^c_{\wedge}}_--{\vhi_1^c}_-)\,.
		\] 
		
		\noindent
		By optimality of $\vhi_1$, $\Delta_K \leq 0$. Let us prove the converse inequality. Substituting $f_1 = f_2 + f_1-f_2$ in the definition of $\Delta_K$, we obtain
		\begin{equation*}
			\Delta_K=\int f_2 (\vhi_{\wedge} - \vhi_1) \,+\int (1-f_2)({\vhi_{\wedge}^c}_- - {\vhi_1^c}_-)\,
			+\int (f_2-f_1)(\vhi_1 - \vhi_{\wedge} + {\vhi_{\wedge}^c}_- - {\vhi_1^c}_-)\,.
		\end{equation*}
		
		\noindent
		We have $f_2-f_1\ge0$ and $\vhi_1- \vhi_{\wedge} \ge0$. Additionally, $\vhi_{\wedge}^c \geq \vhi_1^c$, so that ${\vhi_{\wedge}^c}_-\ge{\vhi_1^c}_-$. Thus the last integral in $\Delta_K$ is non-negative. Adding and subtracting $f_2 \vhi_2$ in the first integral yields
		\begin{equation}\label{lowerboundDeltaK}
			\Delta_K \geq \int f_2 \vhi_2 \,+ \int f_2 (\vhi_{\wedge}-\vhi_1-\vhi_2)\, +\int (1-f_2)({\vhi_{\wedge}^c}_--{\vhi_1^c}_-)\,.
		\end{equation}
		
		\noindent
		Let us set $\vhi_{\vee} := \vhi_1 \vee \vhi_2$. By optimality of $\vhi_2$, we have $K_{f_2}(\vhi_2,{\vhi_2^c}_-)\ge K_{f_2}(\vhi_{\vee}, {\vhi_{\vee}^c}_-)$, which rewrites as
		\[
		\int f_2\vhi_2\, \ge \int f_2 \vhi_{\vee}\,  + \int(1-f_2)({\vhi_{\vee}^c}_--{\vhi_2^c}_-)\,.
		\]
		
		\noindent
		Injecting this inequality in the first term of the right-hand side of~\eqref{lowerboundDeltaK} yields
		\begin{equation}\label{boundeddeltaK}
			\Delta_K \geq  \int f_2(\vhi_{\wedge} + \vhi_{\vee} -\vhi_1-\vhi_2)\,
			+\int (1-f_2)({\vhi_{\vee}^c}_-+{\vhi_{\wedge}^c}_--{\vhi_1^c}_--{\vhi_2^c}_-)\,.
		\end{equation}
		
		\noindent
		The integrand in the first integral of~\eqref{boundeddeltaK} vanishes. Regarding the second term, using~\eqref{ctransfwedgeq} and~\eqref{ctransfwedgegeq} we obtain
		\[
		{\vhi_{\vee}^c}_-+{\vhi_{\wedge}^c}_-\ge {\vhi_1^c}_-\wedge{\vhi_2^c}_-+ {\vhi_1^c}_-\vee{\vhi_2^c}_- ={\vhi_1^c}_-+{\vhi_2^c}_-.
		\]
		
		\noindent
		Hence the integrand in the second integral is non-negative. We conclude that $\Delta_K \geq 0$ and finally that $\Delta_K=0$ so that the claim is proved. \medskip
		
		\noindent
		\textit{Step 2. Construction of the minimal element of $\Phi_f$.}
		
		By Lemma~\ref{prop_ex_DPp}, $\Phi'$ is compact. As $\vhi \mapsto K_f(\vhi, \vhi^c_{\,-})$ is continuous for the norm of uniform convergence, $\Phi_f$ is compact as well. Let $(\vhi_j)_{j \geq 0}$ be a dense subset of $\Phi_f$. For $x \in X$ and $j \geq 0$, we define $\widetilde{\vhi}_j$ and $\vhi_f$ by
		\[
		\widetilde \vhi_j(x):=\min(\vhi_0(x),\dots,\vhi_j(x)) \qquad \text{and} \qquad \vhi_f(x):= \inf\{ \vhi(x), \, \vhi \in \Phi_f \}.
		\]
		
		\noindent
		Using our preliminary claim with $f_1=f_2=f$ recursively, we obtain that for any $j \geq 0$, $\widetilde \vhi_j \in \Phi_f$. As $\Phi_f$ is compact and $\widetilde \vhi_j\to\vhi_f$ pointwise, we obtain that  $\widetilde \vhi_j\to\vhi_f$ uniformly and $\vhi_f\in\Phi_f$, so that $\vhi_f$ is the desired minimal element of $\Phi_f$.
		
		\medskip
		
		\noindent
		\textit{Step 3. Conclusion.} 
		
		Taking $\psi_f:={\vhi_f^c}_-$ proves $(i)$. Let $f_1 \leq f_2$ as given in the statement of $(ii)$. By the previous step, there exist $\vhi_1, \vhi_2$ respective minimal elements for $\Phi_{f_1}$ and $\Phi_{f_2}$ such that $\psi_1 := {\vhi_1^c}_-$ and $\psi_2 :={\vhi_2^c}_-$ are respective maximal elements for $\Psi_{f_1}$ and $\Psi_{f_2}$. By the preliminary claim, $\vhi_1 \wedge \vhi_2 \in \Phi_{f_1}$ and by minimality of $\vhi_1$ we have $\vhi_1 \leq \vhi_1 \wedge \vhi_2$, so that $\vhi_2 \geq \vhi_1$. Hence $\psi_2 \leq \psi_1$.
	\end{proof}

	\section{Existence of maximisers of~\eqref{MPintro} for translation invariant costs in $\R^d$}\label{trslinv}
	
	We now assume that $X=\R^d$, that $\lambda$ is the Lebesgue measure and that $c(x,y)=k(y-x)$, with $k : \R^d \to \R_+$. We recall the following hypotheses on $k$.
	\begin{enumerate}[(H1)]
		\item$ k \in C(\R^d, \R_+)$, $k(0)=0$ and $k(x) \to \infty$ as $|x| \to \infty$,
		\item $\forall x \not = 0$,		
		\[
		\limsup_{r \to 0} \frac{1}{r^d} \lt|B_r(x) \cap \{y \in \R^d, \, k(y) < k(x)\} \rt| > 0,
		\]
		\item $\forall \, \sigma \in \mathbb{S}^{d-1}$, $r \mapsto k(r\sigma)$ is increasing on $\R_+$.	
	\end{enumerate}
	Notice that under hypotheses~(H\ref{cont})\&(H\ref{cone}), there holds $k(x)> 0$ for $x \neq 0$.
	\medskip
	
	The primal problem is now defined as
	\begin{equation}\label{PPRd}
		\Ups(f) :=\inf \lt\{\int c\,d\gamma : \gamma\in \PI_f\rt\},
	\end{equation}
	where
	\[
	\PI_f :=\lt\{\gamma\in\mathcal{M}_+(\R^d\times \R^d) : \gamma_x = f,\, \gamma_y \le 1-f \rt\}.
	\]
	
	The goal of this section is to prove that for  every  $m>0$ the energy
	\begin{equation}\label{MPintro}
		\E(m) := \sup \lt\{\Ups(f) : f \in L^1(\R^d, [0,1]), \int f \, = m \rt\}
	\end{equation}
	admits a maximiser.
	
	\subsection{First properties of $\Ups$ and saturation theorem.}\hfill
	
	In this subsection, we collect some properties of the functional $\Ups$ defined in $\R^d$ and establish a saturation property (Theorem~\ref{thm_saturation}), namely that if $\gamma$ is a minimiser for $\Ups(f)$ then $\gamma_y(x) \in \{f(x), 1-f(x)\}$ for almost every $x \in \R^d$.
	
	\medskip
	
	We start by proving that minimisers of \eqref{PPRd} exist. The proof of this result is similar to the proof of~\cite[Proposition 2.1]{CanGol}, but with weaker assumptions on the cost $c$ and in the context of functions taking values in $[0,1]$ rather than in $\{0, 1\}$.
	
	\begin{proposition}\label{prop_Ups_max}
		Assume that $k$ satisfies~(H\ref{cont}). Then, for any $m >0$ and $f \in L^1_m$, the infimum in~\eqref{PPRd} is attained. Additionally, given any minimiser $\gamma$ of~\eqref{PPRd} we have $\Ups(f) = \mathcal{T}_c(f,g)$, where $g := \gamma_y$.
		
		Lastly, there exists $R = R(m)$ non-decreasing in $m$ such that for any $f \in L^1_m$,
		\begin{equation}\label{infrestr}
			\Ups(f)=\min \lt\{\int c\, d\gamma :\gamma\in \PI_f, \, \forall \, (x,y) \in \supp \gamma, \, |x-y| \leq R \rt\}
		\end{equation}
		
		\noindent
		and for any minimiser $\gamma$ of~\eqref{PPRd}, there holds $|x-y| \leq R$ on $\supp\gamma$.
		
	\end{proposition}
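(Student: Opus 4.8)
\textit{Proof proposal.} The statement contains three assertions — existence of a minimiser, the identity $\Ups(f)=\mathcal T_c(f,g)$, and the a priori bound $R(m)$ — and all three rest on one elementary \emph{local rerouting} construction, which I describe first. Fix $R_0=R_0(m):=(3m/\om_d)^{1/d}$, so that $\om_dR_0^d\ge3m$, and set $M_0=M_0(m):=\max_{\overline B_{R_0}}k$, which is finite and non-decreasing in $m$ by~(H\ref{cont}). Given $f\in L^1_m$, a positive measure $\mu$ whose density is bounded above by $f$, and a bounded measurable density $r\ge0$ with $\int_{B_{R_0}(x)}r\ge\int\mu$ for every $x\in\R^d$, I claim there is $\sigma\in\mathcal M_+(\R^d\times\R^d)$ with $\sigma_x=\mu$, $\sigma_y\le r\,dx$, $\supp\sigma\subset\{|x-y|\le R_0\}$, and $\int c\,d\sigma\le M_0\int\mu$. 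One simply takes the superposition $\sigma:=\int(\delta_x\otimes p_x)\,d\mu(x)$, where $p_x$ is the probability measure with density proportional to $\chi_{B_{R_0}(x)}\,r$: then $\sigma_x=\mu$ by construction, $\supp\sigma$ lies within distance $R_0$ of the diagonal, the cost bound is clear since $k\le M_0$ on $\overline B_{R_0}$, and a Fubini computation shows the density of $\sigma_y$ at $y$ equals $r(y)\int_{B_{R_0}(y)}\big(\int_{B_{R_0}(x')}r\big)^{-1}d\mu(x')\le r(y)\,\mu(B_{R_0}(y))/\!\int\mu\le r(y)$. Applying this with $\mu=f$ and $r=1-f$ — the hypothesis holds because $\int_{B_{R_0}(x)}(1-f)\ge\om_dR_0^d-\int f\ge\int f$ — yields an element of $\PI_f$ supported near the diagonal; in particular $\Ups(f)\le M_0\,m<\infty$ for all $f\in L^1_m$.

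\emph{Existence and the transport identity.} I would then apply the direct method. A minimising sequence $\gamma_n$ may be assumed to satisfy $\int c\,d\gamma_n\le\Ups(f)+1$; its first marginals all equal $f$, hence form a tight family, and since $k(z)\to\infty$ we have $\gamma_n(\{|x-y|>\rho\})\le(\inf_{|z|>\rho}k)^{-1}(\Ups(f)+1)\to0$ as $\rho\to\infty$, uniformly in $n$. Combining these, $\{\gamma_n\}$ is tight, so by Prokhorov a subsequence converges narrowly to some $\gamma$. Testing the marginal constraints against functions in $C_b(\R^d)$ (respectively against nonnegative functions of $C_c(\R^d)$, since $1-f\notin L^1$) shows $\gamma\in\PI_f$, while lower semicontinuity of $\gamma\mapsto\int c\,d\gamma$ for the nonnegative continuous cost $c$ gives $\int c\,d\gamma\le\Ups(f)$, hence optimality. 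For the identity, with $g:=\gamma_y$: $\gamma\in\PI(f,g)$ gives $\mathcal T_c(f,g)\le\Ups(f)$, and conversely any $\gamma'\in\PI(f,g)$ has $\gamma'_y=g\le1-f$, so $\gamma'\in\PI_f$ and $\mathcal T_c(f,g)\ge\Ups(f)$.

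\emph{The radius $R(m)$.} Set $R=R(m):=\sup\{|z|:k(z)\le M_0\}$, finite by~(H\ref{cont}), non-decreasing in $m$, and satisfying $R\ge R_0$ (since $M_0\ge k(R_0\sigma)$ for unit $\sigma$). Let $\gamma\in\PI_f$ and $A:=\{(x,y):k(y-x)>M_0\}$, an open set, and suppose $\delta:=\gamma(A)>0$. Apply the construction above with $\mu:=(\gamma\restr A)_x$ (density $\le f$, mass $\delta\le m$) and $r:=(1-f)-\gamma_y+(\gamma\restr A)_y\ge0$; the hypothesis $\int_{B_{R_0}(x)}r\ge\om_dR_0^d-2m\ge m\ge\delta$ holds because $\int f$ and $\int\gamma_y$ are $\le m$. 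The resulting $\sigma$ produces $\tilde\gamma:=\gamma\restr A^c+\sigma$, which one checks lies in $\PI_f$, is supported in $\{|x-y|\le R\}$ (because $A^c\subset\{|y-x|\le R\}$ and $\supp\sigma\subset\{|x-y|\le R_0\}$), and satisfies $\int c\,d\tilde\gamma=\int_{A^c}c\,d\gamma+\int c\,d\sigma<\int_{A^c}c\,d\gamma+\int_A c\,d\gamma=\int c\,d\gamma$, the inequality being strict because $c>M_0$ on the nonzero measure $\gamma\restr A$ whereas $\int c\,d\sigma\le M_0\delta$. Two consequences follow. First, every $\gamma\in\PI_f$ admits such a $\tilde\gamma$ of no larger cost supported in $\{|x-y|\le R\}$, which yields~\eqref{infrestr} (the minimum there being attained by the minimiser of $\Ups(f)$, which is admissible for it). Second, a minimiser $\gamma$ of $\Ups(f)$ cannot charge $A$, so $\supp\gamma\subset A^c\subset\{|x-y|\le R\}$, as $A$ is open.

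\emph{Main difficulty.} The one non-routine ingredient is the local rerouting construction together with the fact that $R$ depends only on $m$: the point is to convert the heuristic ``there is always enough empty room within a controlled distance'' into an explicit admissible plan carrying an explicit, $f$-independent cost bound. Given that, the tightness argument, the lower semicontinuity, and the cut-and-paste surgery are standard.
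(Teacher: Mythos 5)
Your proposal is correct and follows essentially the same strategy as the paper: reroute any mass transported beyond a radius determined by $\max k$ over a region of volume $3m$ into nearby empty room (the volume count $3m-\smallint f-\smallint \gamma_y\ge m$ is exactly the paper's), then conclude by tightness and lower semicontinuity. The differences are only cosmetic — balls $B_{R_0}(x)$ and the superposition kernel $\int \delta_x\otimes p_x\,d\mu(x)$ in place of the paper's cube partition with per-cube product measures, and a Chebyshev bound on $\gamma_n(\{|x-y|>\rho\})$ in place of the paper's support restriction when proving tightness.
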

	\smallskip
	
	\begin{proof}~\\
		The strategy of the proof is to first establish~\eqref{infrestr} with an infimum in place of the minimum. Then we use this property to derive compactness for~\eqref{PPRd}.
		
		\medskip
		
		\noindent
		\textit{Step 1. Restricting the set of competitors for~\eqref{PPRd}.}
		
		We let $\gamma \in \PI_f$ and set $g := \gamma_y$. We want to build a competitor $\widetilde{\gamma}$ for $\Ups(f)$ such that for some $R >0$, $|x-y| \leq R $ for every $ (x,y) \in \supp \widetilde{\gamma}$. For $R > 0$, we define
		\begin{equation*}
			\Gamma_R := \lt\{ (x,y) \in \R^d \times \R^d, \, |x-y| \geq R \rt\}.
		\end{equation*}
		
		\noindent
		We consider a standard partition of $\R^d$ into cubes $(Q_i)_{i \geq 0}$ with side-length $\rho_1(m) := (3m)^{1/d}$. We define
		\[
		I := \{ i \geq 0, \, m_i := \gamma(\Gamma_R \cap (Q_i \times \R^d))>0\},
		\]
		
		\noindent
		and for $ i \in I$, we set
		\[
		\gamma_{\text{bad}, i} := \chi_{\Gamma_R \cap (Q_i \times \R^d)} \gamma.
		\] 
		
		\noindent
		As $|Q_i| - \smallint f - \smallint g \geq m \geq m_i$, there exists a positive measure $\mu_i\ll\chi_{Q_i}\lambda$ such that 
		\[
		\mu_i \leq \chi_{Q_i}(1 - f - g) \qquad \text{and} \qquad \mu_i (\R^d)=\gamma_{\text{bad},i}(\R^d \times \R^d).\]
		
		\noindent
		Denoting by $\theta_i$ the first marginal of $\gamma_{\text{bad},i}$ we set
		
		\begin{equation*}
			\widetilde{\gamma}_i := \chi_{Q_i \times \R^d} \gamma - \gamma_{\text{bad},_i} + \frac{1}{m_i}\theta_i \otimes \mu_i\ \ge0.
		\end{equation*}
		\noindent
		For $i \in I^c$, we simply define $\widetilde{\gamma}_i := \chi_{(Q_i \times \R^d )} \gamma$. As a consequence, $\widetilde{\gamma} := \sum_{i \geq 0} \widetilde{\gamma_i}$ is a transport plan whose first marginal is $f$ and second marginal $\widetilde{g}$ verifies $\widetilde{g} \leq g \leq 1-f$. By construction, for $R > \sqrt{d}\rho_1(m)$, we have $\widetilde{\gamma}(\Gamma_R) = 0$. 
		
		Let us now compare the transportation cost of $\gamma$ and $\widetilde{\gamma}$. We compute:
		
		\begin{equation*}
			\begin{aligned}
				\int c \, d\widetilde{\gamma} -\int c \, d\gamma &= \sum_{i \in I} \int_{Q_i \times \R^d} c \, d\lt(\frac{\theta_i \otimes \mu_i}{m_i} - \gamma_{\mathrm{bad},i}  \rt) \\ 
				&\leq \lt(\sum_{i \in I} m_i \rt) \lt( \max_{z \in \overline{Q}_{\rho_1(m)}} k(z) - \inf_{|z|\geq R} k(z) \rt).
			\end{aligned}
		\end{equation*}
		
		\noindent
		Let us set $\overline{Q}_{\rho_1(m)}:=[0; \rho_1(m)]^d$ and then $M := \max\{k(z), \, z \in \overline{Q}_{\rho_1(m)}\}$. By~(H\ref{cont}), there exists $R> \sqrt{d}\rho_1(m)$ such that if $|z| > R$, then $k(z)>M$. With this choice of $R$ we have $\smallint c \,d\widetilde{\gamma} \leq \smallint c \,d\gamma$. Lastly, whenever $\gamma(\Gamma_R) >0$,
		\begin{equation}\label{stricineqgamma}
			\int c \,d\widetilde{\gamma} < \int c \,d\gamma.
		\end{equation}
		
		\noindent
		\textit{Step 2 : Lower semi-continuity of the transportation cost.}
		
		This step is classical. To prove that $\gamma \mapsto \smallint c \,d\gamma$ is lower semi-continuous with respect to weak convergence, we proceed by approximation. Let us assume that $\gamma_n \st*\rightharpoonup \gamma$ as $n \to \infty$. For $j \geq 0$, we define $c_j := c \wedge j$. The sequence $c_j$ is non-decreasing and converges pointwise to $c$. For every $j \geq 0$, $c_j \in C_b(\R^d\times \R^d)$, so that
		\[
		\int c_j \, d\gamma = \lim_n \int c_j \, d\gamma_n \leq \liminf_n\int c \, d\gamma_n.
		\] 
		
		\noindent
		By the monotone convergence theorem,
		\[
		\int c \, d\gamma = \lim_j \int c_j \, d\gamma \leq \liminf_n\int c \, d\gamma_n,
		\]
		
		\noindent
		which concludes the second step of the proof. \medskip
		
		\noindent
		\textit{Step 3. $\Ups(f)$ admits a minimiser.}
		
		Let $\gamma_n$ be a minimising sequence for $\eqref{PPRd}$. Let us show that the sequence $\gamma_n$ is tight. By the first step, we can assume that there exists $R = R(m)$ such that for any $n \geq 0$ there holds $|x-y| \leq R $ on $\supp \gamma_n$. Now, because $\smallint f \leq m < \infty$, there exists $R' = R'(m)> 0$ such that  $\smallint_{\R^d \setminus B_{R'}} f \leq \eps$. Hence,
		\begin{equation*}
			\gamma_n \lt(\R^d \times \R^d \setminus (B_{R'} \times B_{R+R'})\rt)= \gamma_n\lt(B_{R'} \times (\R^d \setminus B_{R+R'})\rt) + \gamma_n \lt((\R^d \setminus B_{R'}) \times \R^d\rt)  \leq 0 + \eps m
		\end{equation*}
		
		\noindent
		which proves that the sequence $\gamma_n$ is tight. Together with the second step, this shows that~\eqref{PPRd} admits a minimiser. Moreover, by~\eqref{stricineqgamma} for any minimiser $\gamma$ of~\eqref{PPRd} there holds $|x-y| \leq R$ on $\supp\gamma$. Lastly, setting  $g := \gamma_y$ the identity $\Ups(f) = \mathcal{T}_c(f,g)$ is immediate.
	\end{proof}
	
	We now establish some basic properties of the functional $\Ups$. The results here are similar to~\cite[Proposition 2.2 \& Lemma  2.4]{CanGol}.

	\begin{proposition}\label{prop_basic_ups}
		Assume that $k$ satisfies~(H\ref{cont}). Given $m>0$ and $f_1, f_2 \in L^1_m$ we have:
		
		\begin{enumerate}[(i)]
			\item If $f_1 + f_2 \leq 1$, then
			\begin{equation*}
				\Ups(f_1 + f_2) \geq \Ups(f_1) + \Ups(f_2).
			\end{equation*}
			
			\noindent
			As a consequence, if $f_1 \leq f_2$, then $\Ups(f_1) \leq \Ups(f_2)$.
			
			\item There exists $R=R(m)$ such that if $d(\supp f_1, \supp f_2) \geq R$, then
			\begin{equation*}
				\Ups(f_1 + f_2) = \Ups(f_1) + \Ups (f_2).
			\end{equation*}
			
			\item There exists $C=C(m)>0$ such that
			\[
			|\Ups(f_1)-\Ups(f_2)|\le C\|f_1-f_2\|_{L^1}.
			\]
			
			\item Let $f, f_n\in L^1(\R^d,[0,1])$ be such that the sequence $f_n$ is tight and $f_n\st*\rightharpoonup f$. Then $\Ups(f_n)\to\Ups(f)$.
		\end{enumerate}
		
	\end{proposition}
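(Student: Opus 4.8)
The plan is to treat the four items roughly in the stated order, since (iii) will be used to get (iv), and the basic monotonicity from (i) is used throughout.

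\emph{Item (i).} The key point is that transport plans add: if $\gamma_1\in\PI_{f_1}$ and $\gamma_2\in\PI_{f_2}$ are optimal for $\Ups(f_1)$ and $\Ups(f_2)$, then $\gamma:=\gamma_1+\gamma_2$ has first marginal $f_1+f_2$ and second marginal $(\gamma_1)_y+(\gamma_2)_y$. However this sum need not be $\le 1-(f_1+f_2)$ pointwise, so $\gamma$ need not lie in $\PI_{f_1+f_2}$ — hence superadditivity rather than additivity. To get the inequality in the correct direction I would instead argue \emph{from} a minimiser $\gamma$ of $\Ups(f_1+f_2)$: one must split $\gamma$ (and its target mass $g:=\gamma_y$) into two pieces compatible with the constraints $(1-f_1)$ and $(1-f_2)$ respectively. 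Concretely, write $f_1+f_2\le 1$ so $g\le 1-f_1-f_2$; decompose $g=g_1+g_2$ with $0\le g_i$, $g_1\le 1-f_1$, $g_2\le 1-f_2$ (take e.g. $g_1:=\min(g,1-f_1)$, $g_2:=g-g_1\le 1-f_2$), and correspondingly use a measurable selection / disintegration to split $\gamma=\gamma^{(1)}+\gamma^{(2)}$ with $(\gamma^{(i)})_y=g_i$ and $(\gamma^{(1)})_x+(\gamma^{(2)})_x=f_1+f_2$. The first marginals of the pieces won't be exactly $f_1,f_2$, so a cleaner route is: $\Ups(f_1+f_2)=\mathcal{T}_c(f_1+f_2,g)\ge \mathcal{T}_c(f_1+f_2,g_1+g_2)$ and then use subadditivity of the optimal transport cost under splitting of \emph{both} marginals, together with $\mathcal{T}_c(f_i,g_i)\ge\Ups(f_i)$ since $g_i\le 1-f_i$. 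The monotonicity consequence ($f_1\le f_2\Rightarrow\Ups(f_1)\le\Ups(f_2)$) follows by applying (i) with $f_2-f_1\ge 0$ in place of $f_2$ and using $\Ups\ge 0$.

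\emph{Item (ii).} Let $R=R(m)$ be the radius from Proposition~\ref{prop_Ups_max} (applied with mass $2m$, say, since $f_1+f_2\in L^1_{2m}$, or just note $R$ is non-decreasing). If $d(\supp f_1,\supp f_2)\ge R$, take an optimal $\gamma$ for $\Ups(f_1+f_2)$; by Proposition~\ref{prop_Ups_max} we may assume $|x-y|\le R$ on $\supp\gamma$. Restricting $\gamma$ to $\supp f_1\times\R^d$ and to $\supp f_2\times\R^d$ gives a decomposition $\gamma=\gamma^{(1)}+\gamma^{(2)}$; the support condition forces the target of $\gamma^{(1)}$ to sit within distance $R$ of $\supp f_1$, hence disjoint from $\supp f_2$ and — if $R$ is chosen large enough relative to the separation — the constraints $(\gamma^{(i)})_y\le 1-f_i$ hold (the mass landing near $\supp f_i$ only needs to avoid $f_i$). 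Thus $\Ups(f_1+f_2)=\smallint c\,d\gamma^{(1)}+\smallint c\,d\gamma^{(2)}\ge\Ups(f_1)+\Ups(f_2)$, and combined with (i) this is an equality.

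\emph{Items (iii) and (iv).} For (iii), let $\gamma_1$ be optimal for $\Ups(f_1)$ with $|x-y|\le R$ on its support. I would modify $\gamma_1$ into a competitor for $\Ups(f_2)$ by adjusting the first marginal from $f_1$ to $f_2$: where $f_2<f_1$ one removes mass (decreasing the cost), and where $f_2>f_1$ one must add at most $\|(f_2-f_1)_+\|_{L^1}\le\|f_1-f_2\|_{L^1}$ units of mass and transport them a bounded distance $\le R'$ (using that there is always room $\le 1-f_2$ nearby within a cube of controlled size, as in Step~1 of Proposition~\ref{prop_Ups_max}), so the cost changes by at most $C\|f_1-f_2\|_{L^1}$ with $C=\max_{\overline B_{R'}}k$; symmetry gives the two-sided bound. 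For (iv): tightness plus $f_n\st*\rightharpoonup f$ gives $f_n\to f$ in $L^1$? No — narrow convergence plus tightness does not give $L^1$ convergence. Instead I would argue directly: take $\gamma_n$ optimal for $\Ups(f_n)$, supported in $|x-y|\le R$; tightness of $f_n$ gives tightness of $\gamma_n$, so up to a subsequence $\gamma_n\st*\rightharpoonup\gamma_\infty$ with $(\gamma_\infty)_x=f$ and $(\gamma_\infty)_y\le 1-f$ (the inequality constraint passes to the limit since $\gamma_n$ are also tight in the second variable), hence $\liminf\Ups(f_n)=\liminf\smallint c\,d\gamma_n\ge\smallint c\,d\gamma_\infty\ge\Ups(f)$ by the lower semicontinuity of Step~2 of Proposition~\ref{prop_Ups_max}. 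For the reverse, take $\gamma$ optimal for $\Ups(f)$; one must build competitors $\tilde\gamma_n\in\PI_{f_n}$ with $\smallint c\,d\tilde\gamma_n\to\smallint c\,d\gamma$ — adjust the first marginal of $\gamma$ from $f$ to $f_n$ using a bounded-cost correction as in (iii); the correction cost is controlled by $\|f-f_n\|_{L^1}$, which is \emph{not} known to vanish, so this is the delicate point: one instead uses $|\Ups(f_n)-\Ups(f*_n)|$-type estimates after a convolution regularisation, or argues that $\limsup\Ups(f_n)\le\Ups(f)$ by upper semicontinuity of $f\mapsto\Ups(f)$ under narrow convergence, which follows from writing $\Ups$ as an infimum of the continuous (in narrow convergence) functionals $f\mapsto\smallint c\,d\gamma$ over a set of plans that is appropriately continuous in $f$. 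I expect this last half-continuity — getting $\limsup\Ups(f_n)\le\Ups(f)$ from narrow, not $L^1$, convergence — to be the main obstacle; the resolution is that the constraint set $\PI_f$ varies upper-semicontinuously in a way compatible with producing near-optimal plans for $f_n$ from one for $f$.
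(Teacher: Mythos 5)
There are genuine gaps in three of the four items. In item (i), your proposed splitting of the target $g=\gamma_y$ fails: since $g\le 1-f_1-f_2$, your choice $g_1:=\min(g,1-f_1)$ gives $g_1=g$ and $g_2=0$, so the corresponding pieces of $\gamma$ cannot have first marginals $f_1$ and $f_2$ (as you half-acknowledge), and the ``subadditivity under splitting of both marginals'' you then invoke in the superadditive direction is not a standard fact --- it is precisely what has to be proved. The correct move is to split the optimal plan by the density ratio of the \emph{first} marginal, $d\gamma^i:=\frac{f_i}{f_1+f_2}\,d\gamma$ (with $0/0=0$): then $\gamma^i_x=f_i$ automatically, and $\gamma^i_y\le\gamma_y\le 1-f_1-f_2\le 1-f_i$, so each piece is admissible for $\Ups(f_i)$ and the costs add. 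In item (ii) your logic is reversed: restricting an optimal plan for $\Ups(f_1+f_2)$ and showing the pieces are admissible for $\Ups(f_i)$ yields $\Ups(f_1+f_2)\ge\Ups(f_1)+\Ups(f_2)$, which is exactly what (i) already gives; two inequalities in the same direction do not combine into an equality. What (ii) requires is the converse bound $\Ups(f_1+f_2)\le\Ups(f_1)+\Ups(f_2)$, obtained by \emph{summing} optimal plans $\gamma^1+\gamma^2$ for $f_1$ and $f_2$; the support separation is what guarantees $\gamma^1_y+\gamma^2_y\le 1-(f_1+f_2)$, i.e.\ admissibility of the sum.

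In item (iv) you correctly isolate the difficulty ($\limsup\Ups(f_n)\le\Ups(f)$ under narrow convergence only) but leave it unresolved: the appeal to upper semicontinuity of the constraint set $\PI_f$ is a restatement of the problem, not an argument, since one cannot modify a near-optimal plan for $f$ into one for $f_n$ at a cost controlled by anything that tends to zero. The paper's resolution is to switch to the dual: after reducing to a compact ball (using (iii) to truncate), $\Ups=\Ups^*$, the optimal potentials for $\Ups^*(f_n)$ live in the compact class $\Phi'$ of Lemma~\ref{lem_Phi}, so along a subsequence they converge uniformly to a pair admissible for $\Ups^*(f)$, and weak-$*$ convergence of $f_n$ against uniformly convergent potentials gives $\limsup\Ups^*(f_n)\le\Ups^*(f)$; the $\liminf$ bound follows by testing $\Ups^*(f_n)$ with the optimal potentials for $f$. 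Your item (iii) is essentially the paper's argument in outline, though you should also handle the point that even the mass transported through the restricted plan may violate the new constraint $\gamma_y\le 1-f_2$ where $f_2>f_1$ (the paper removes an excess $u=(f_2+g'-1)_+$, costing another $\|f_1-f_2\|_{L^1}$).
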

	\smallskip

	\begin{proof}~\\
		\textit{Step 1. Proof of (i)\&(ii)}.
		
		To prove $(i)$, we consider a transport plan $\gamma$ optimal for $\Ups(f_1 + f_2)$ whose existence is guaranteed by Proposition~\ref{prop_Ups_max}. We would like to extract from $\gamma$ two plans $\gamma^1$ and $\gamma^2$ admissible for $\Ups(f_1)$ and $\Ups(f_2)$ respectively. Using the convention $0/0 = 0$, we define $\gamma^1$ and $\gamma^2$ through
		\[
		d\gamma^1(x,y) :=  \frac{f_1(x)}{(f_1 + f_2)(x)} \, d\gamma(x,y) \qquad \text{and} \qquad
		d\gamma^2(x,y) :=  \frac{f_2(x)}{(f_1 + f_2)(x)} \, d\gamma(x,y).
		\]
		
		\noindent
		By construction, $\gamma^1_x = f_1$ and $\gamma^2_x = f_2$. We also have $\gamma^1 \leq \gamma$, so that 
		\[
		\gamma^1_y \leq \gamma_y \leq 1-(f_1+f_2) \leq 1-f_1.
		\]
		
		\noindent
		Likewise, $\gamma^2_y \leq 1 - f_2$. Therefore, $\gamma^1$ and $\gamma^2$ are admissible for $\Ups(f_1)$ and $\Ups(f_2)$ respectively. Moreover
		\begin{equation*}
			\Ups(f_1) + \Ups(f_2) \leq \int c \, d\gamma^1 + \int c\, d\gamma^2 = \int c \, d\gamma = \Ups(f_1 + f_2),
		\end{equation*}
		
		\noindent
		which is the desired conclusion.
		
		To prove $(ii)$, we consider transport plans $\gamma^1$ and $\gamma^2$ which are optimal for $\Ups(f_1)$ and $\Ups(f_2)$ respectively. We define $g_1 := \gamma^1_y$ and $g_2 := \gamma^2_y$. If we set $\gamma := \gamma^1 + \gamma^2$, we have $\gamma_x = f_1+f_2$ and $\gamma_y = g_1 + g_2$. Moreover, by Proposition~\ref{prop_Ups_max}, if $d(\supp f_1, \supp f_2) \geq R$ for $R=R(m)$ large enough, then the supports of $g_1$ and $g_2$ are also disjoint. Consequently, $g_1 + g_2 \leq 1 - (f_1 + f_2)$, so that $\gamma$ is admissible for $\Ups(f_1+f_2)$ and we have the desired converse inequality
		\begin{equation*}
			\Ups(f_1 + f_2) \leq \int c \, d(\gamma^1 + \gamma^2) \leq \Ups(f_1) + \Ups(f_2).
		\end{equation*}
		
		\noindent	
		\textit{Step 2. Proof of (iii).}
		
		Exchanging the roles of $f_1$ and $f_2$, it is enough to prove the estimate
		\begin{equation}\label{upperLip}
			\Ups(f_2)-\Ups(f_1) \leq C \|f_2-f_1\|_{L^1}.
		\end{equation}
		Let $\gamma^1$ be a minimiser of $\Ups(f_1)$ and let us set $g_1 := \gamma^1_x$. In the next substeps, we build from $\gamma^1$ an exterior transport plan $\gamma^2$ for $f_2$ with controlled cost.
		\medskip
		
		\noindent
		\textit{Step 2.a. Transporting most of $f_1 \wedge f_2$.}
		
		Using the convention $0/0= 0$, we define a plan $\gamma'$ by
		\[
		d\gamma'(x,y) :=  \frac{(f_1 \wedge f_2)(x)}{f_1(x)} d\gamma_1(x,y).
		\]
		
		\noindent
		We set $g' := \gamma'_y$. Notice that $\gamma' \leq \gamma^1$, which implies $g' \leq g_1$. Additionally, $\gamma'_x = f_1 \wedge f_2$, so that
		\begin{equation}\label{massg2a}
			\gamma^1(\R^d \times \R^d) - 	\gamma'(\R^d \times \R^d) = \int(f_1-f_1\wedge f_2)\,= \int (f_1 - f_2)_+\,.
		\end{equation}
		Heuristically, $\gamma'$ corresponds to sending through $\gamma^1$ as much mass from $f_2$ as possible. However, we have to remove some of this mass because the constraint $g' \leq 1 - f_2$ might not hold true everywhere. Let
		\[
		u:=(f_2+g'-1)_+
		\]
		and define $\gamma''$ as
		\[
		d\gamma''(x,y) :=  \frac{g'(y)-u(y)}{g'(y)} \, d\gamma'(x,y).
		\]
		
		\noindent
		We set $f'' := \gamma''_x$ and $g'':=\gamma''_y$. By construction, $g''=g'-u$ so $g''\le1-f_2$ as desired. Since  $\gamma'' \leq \gamma'$, we also have $f'' \leq f_1 \wedge f_2 \leq f_2$. Now since $g' \leq g_1 \leq 1 -f_1$ we have $u\le (f_2-f_1)_+$ from which we infer
		
		\begin{equation*}
			\gamma'(\R^d \times \R^d) - \gamma''(\R^d \times \R^d) =\int (g'-g'')\,=\int u\, \leq \int (f_2 - f_1)_+\,.
		\end{equation*}
		
		\noindent
		Summing this and~\eqref{massg2a} yields
		\begin{equation}\label{massg2b}
			\gamma^1(\R^d \times \R^d) - \gamma''(\R^d \times \R^d) \leq \|f_2-f_1\|_{L^1}.
		\end{equation}
		
		\noindent
		Eventually since $\gamma''\le\gamma'\le\gamma^1$ and $c\ge0$ we have
		\begin{equation}\label{boundedg2b}
			\int c \, d\gamma'' - \int c \, d\gamma^1 \le0.
		\end{equation}
	
		\noindent
		\textit{Step 2.b. Final construction.}
		
		We are now ready to build an admissible transport plan $\gamma^2$ for $\Ups(f_2)$. Noticing that $f_2 - f''\ge0$ we write $f_2 = f'' + (f_2 - f'')$. By~\eqref{massg2b} we have
		\begin{equation}\label{totalmassr}
			\int (f_2 - f'')\, = \int(f_2- f_1)\, + \int(f_1 -f'')\, \leq 2 \|f_2-f_1\|_{L^1}.
		\end{equation}
		
		\noindent
		Arguing as in the proof of Proposition~\ref{prop_Ups_max}, we can find  a function $0\le g''' \leq 1- f_2 - g''$ with $\int g'''\,\le\int(f_2-f'')\,$ and a transport plan $\gamma'''$ between $f_2-f''$ and $g'''$ such that for some $C=C(m)>0$,
		\begin{equation}\label{boundedgr}
			\int c \, d\gamma''' \leq C\int(f_2 - f'')\, \st{\eqref{totalmassr}}{\leq} C \|f_2 - f_1\|_{L^1}.
		\end{equation}
		
		\noindent
		Finally we define $\gamma^2 := \gamma'' + \gamma'''$ which is admissible for $\Ups(f_2)$ by construction. Summing~\eqref{boundedg2b} and~\eqref{boundedgr}, we get 
		\[
		\Ups(f_2)\le\int c\,d\gamma^2\le\int c\, d\gamma_1+C \|f_2 - f_1\|_{L^1}=\Ups(f_1)+C \|f_2 - f_1\|_{L^1}.
		\]
		This proves~\eqref{upperLip}  and thus point~(iii).
		
		\medskip
		
		\noindent
		\textit{Step 3. Proof of (iv).}
		
		Let $f_n$ and $f$ be as in the statement of the proposition. By weak convergence, we have $f_n,f\in L^1_m$ for some $m>0$. Using the Lipschitz continuity of $\Ups$ with respect to $L^1$ convergence, we may assume without loss of generality that $f_n$ (and thus also $f$) are supported in $\overline{B}_{R_0}$ for some $R_0>0$. Applying Proposition~\ref{prop_Ups_max} we get that minimisers of $\Ups(f_n)$ and $\Ups(f)$ are supported in $\overline{B}_R\times\overline{B}_R$ for some $R>R_0>0$. We may thus restrict these problems to the compact set $\overline{B}_R$. Using Proposition~\ref{prop_DP=OP} we have $\Ups(f_n)=\Ups^*(f_n)$ and it is thus enough to prove the continuity of $\Ups^*$ with respect to the weak-$*$ topology.\\
		By Proposition~\ref{prop_ex_DPp}, for every $n\ge 0$  there exists a pair of potentials $(\vhi_n, \psi_n)$ maximising $\Ups^*(f_n)$. Since for every $n$, $\vhi_n$ belongs to $\Phi'$ (where $\Phi'$ is defined by \eqref{defPhiprime}) and since this set is compact by Proposition~\ref{prop_ex_DPp} we have that a subsequence $\vhi_{n'}$ of $\vhi_n$ converges in $C(\overline{B}_R)$ to some $\vhi\in \Phi'$. Arguing as in the proof of Proposition~\ref{prop_ex_DPp} we see that $\psi_{n'}$ also converges to $\psi$ with $(\vhi,\psi)$ admissible for $\Ups^*(f)$. By weak-strong convergence we then have
		\[
		\limsup\Ups^*(f_{n'})= \limsup \int f_{n'} \vhi_{n'} +(1-f_{n'}) \psi_{n'} \,=\int f \vhi +(1-f)\psi\,\le \Ups^*(f).
		\]
		Similarly, if $(\vhi,\psi)$ are optimal potentials for $\Ups^*(f)$, they are admissible for $\Ups^*(f_n)$ and thus
		\[
		\liminf\Ups^*(f_n)\ge \liminf
		\int f_n \vhi +(1-f_n) \psi =\int f \vhi +(1-f)\psi= \Ups^*(f).
		\]
		We then have $\lim \Ups^*(f_{n'})=\Ups^*(f)$ and by uniqueness of the limit we see that the extraction was not necessary. This establishes~(iv) and ends the proof of the proposition.
	\end{proof}
	
	The next lemma and theorem state very important saturation properties satisfied by the optimal exterior transport plan. These results extend~\cite[Lemma 5.1 \& Proposition 5.2]{DePMSV} to more general costs $c$.
	
	\begin{lemma}\label{lem_saturation}
		Assume that $k$ satisfies~(H\ref{cont})\&(H\ref{cone}). For  $f \in L^1_m$ let $\gamma$ be optimal for $\Ups(f)$. Then for every $(x_0,y_0)\in\supp\gamma$ there holds $f+\gamma_y\equiv1$ almost everywhere on the saturation set 
		\[
		S(x_0, y_0) := \{ y \in \R^d, \, k(y-x_0) < k(y_0-x_0) \}.
		\]
	\end{lemma}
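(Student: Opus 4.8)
The statement is a local optimality condition: if mass travels from $x_0$ to $y_0$ in the optimal plan, then every point $y$ that is strictly cheaper to reach from $x_0$ than $y_0$ must already be \enquote{full}, i.e.\ $f(y)+\gamma_y(y)=1$. The natural strategy is a competitor argument by contradiction, rerouting a little bit of mass away from $y_0$ and onto an under-saturated piece of $S(x_0,y_0)$, strictly lowering the cost. The first task is to give this argument a precise measure-theoretic footing, since $\supp\gamma$ is a closed set while the saturation condition must hold almost everywhere.

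First I would set $g:=\gamma_y$ and argue by contradiction: suppose the set $A:=\{y\in S(x_0,y_0):f(y)+g(y)<1\}$ has positive Lebesgue measure. Because $(x_0,y_0)\in\supp\gamma$, for every small $r>0$ we have $\gamma\big(B_r(x_0)\times B_r(y_0)\big)>0$; I would fix a small mass $\delta>0$ and, using the disintegration of $\gamma$ with respect to its first marginal $f\,dx$ (equivalently, the inner regularity of $\gamma$), carve out a sub-plan $\gamma_{\mathrm{old}}\le\gamma$ of total mass $\delta$ supported in $B_r(x_0)\times B_r(y_0)$ whose first marginal $\theta$ is absolutely continuous with respect to $\chi_{B_r(x_0)}\,dx$ and satisfies $\theta\le f$. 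This piece of mass currently incurs a cost at least $\delta\big(k(y_0-x_0)-\omega(2r)\big)$ for the modulus of continuity $\omega$ of $k$. I would then reroute it: since $|A|>0$ and, by (H\ref{cone}) together with (H\ref{cont}), $S(x_0,y_0)$ has positive density near $y_0$ and hence is not $\lambda$-null in any neighbourhood of $y_0$ that matters, I can choose a subset $A'\subset A$ of positive measure on which $1-f-g\ge\eta>0$ for some fixed $\eta$ and on which $k(\,\cdot\,-x_0)\le k(y_0-x_0)-\kappa$ for some fixed $\kappa>0$ (possible since $k(\,\cdot\,-x_0)<k(y_0-x_0)$ on $S(x_0,y_0)$ and one may pass to a level set). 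I then put a measure $\mu\le\chi_{A'}(1-f-g)$ of total mass $\delta$ there.

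The new plan is $\widetilde\gamma:=\gamma-\gamma_{\mathrm{old}}+\tfrac1\delta\,\theta\otimes\mu$. By construction $\widetilde\gamma\ge0$, its first marginal is still $f$, and its second marginal is $g-(\gamma_{\mathrm{old}})_y+\mu\le g+\mu\le g+\chi_{A'}(1-f-g)\le 1-f$, so $\widetilde\gamma\in\PI_f$. For the cost: the new mass sits in $B_r(x_0)\times A'$ where $k\le k(y_0-x_0)-\kappa+\omega(r)$ (splitting $k(y-x)$ into $k(y-x_0)$ plus an error controlled by $\omega(|x-x_0|)\le\omega(r)$), so
\[
\int c\,d\widetilde\gamma-\int c\,d\gamma\;\le\;\delta\big(k(y_0-x_0)-\kappa+\omega(r)\big)-\delta\big(k(y_0-x_0)-\omega(2r)\big)\;=\;\delta\big(\omega(r)+\omega(2r)-\kappa\big),
\]
which is strictly negative once $r$ is small enough that $\omega(r)+\omega(2r)<\kappa$. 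This contradicts the optimality of $\gamma$, so $|A|=0$, which is exactly the claim $f+\gamma_y\equiv1$ a.e.\ on $S(x_0,y_0)$.

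\textbf{Main obstacle.} The delicate point is the extraction of the sub-plan $\gamma_{\mathrm{old}}$ with the right properties: one needs its source marginal $\theta$ to be absolutely continuous (so that $\tfrac1\delta\theta\otimes\mu$ is a legitimate competitor and the first marginal of $\widetilde\gamma$ is unchanged) while keeping $\theta\le f$, and one must ensure $\gamma(B_r(x_0)\times B_r(y_0))$ is genuinely positive for arbitrarily small $r$, which is where $(x_0,y_0)\in\supp\gamma$ enters. This is a routine but careful application of disintegration and inner regularity; everything else (choice of $A'$, the cost comparison) is a direct estimate using the uniform continuity of $k$ from (H\ref{cont}) and the definition of $S(x_0,y_0)$, with hypothesis (H\ref{cone}) only needed implicitly to guarantee that saturation sets are non-trivial in the situations where the lemma is later applied.
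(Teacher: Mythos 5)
Your proposal is correct and follows essentially the same route as the paper: argue by contradiction, restrict $\gamma$ to $B_r(x_0)\times B_r(y_0)$, reroute that mass as a product measure onto an under-saturated portion of $S(x_0,y_0)$, and beat the old cost by continuity of $k$ once $r$ is small. The only (easily fixed) caveat is an order of quantifiers — the rerouted mass $\delta$ must be whatever $\gamma\big(B_r(x_0)\times B_r(y_0)\big)$ happens to provide (which shrinks with $r$) rather than being fixed in advance — and the disintegration/absolute-continuity worry is superfluous, since taking $\gamma_{\mathrm{old}}$ to be the plain restriction with first marginal $\theta$ already gives $\widetilde\gamma_x=f-\theta+\theta=f$.
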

	
	\begin{proof}~\\
		In the proof we set $g := \gamma_y$ and $h := f+g$. Let $(x_0, y_0)\in\supp\gamma$ and assume without loss of generality that $x_0 = 0$. We suppose by contradiction that there exists $\eps > 0$ such that the set
		\[S_{\eps} := \{h < 1\} \cap \{y \in \R^d, \, k(y) < k(y_0) - \eps  \}\]
		
		\noindent
		has positive Lebesgue measure. Notice that by~(H\ref{cont}), $k(x) \to \infty$ as $|x|\to \infty$ so that $S_{\eps}$ is bounded. Therefore, 
		\[
		m_{\eps} := \int_{S_{\eps}} (1-h)\,\ \in (0, \infty).
		\] 
		
		\noindent
		We now exhibit an exterior transport plan $\widetilde{\gamma}$ whose transportation cost is strictly smaller than the one of $\gamma$. Given $r >0$, we define the measure $\gamma^0 := \gamma \restr (B_r \times B_r(y_0))$. As $(0, y_0) \in \supp \gamma$, for every $r> 0$,
		\begin{equation}\label{defsup}
			0 < \gamma^0(\R^d \times \R^d) \leq \int_{B_r} f\, \leq |B_r|.
		\end{equation}
		
		\noindent
		Thus, by the last inequality in~\eqref{defsup} there exists $r_{\eps} > 0$ such that for every $r \in (0, r_\eps]$,
		\begin{equation*}
			\gamma^0(\R^d \times \R^d) = \alpha m_{\eps}
		\end{equation*}
		
		\noindent
		for some $0 < \alpha \leq 1$. Let us fix $r \in (0, r_{\eps}]$. We define a competitor $\widetilde\gamma$ for $\Ups(f)$ by setting $\widetilde \gamma := \gamma - \gamma^0 + \eta$, where
		\begin{equation*}
			\eta := \gamma_x^0 \otimes \frac{1-h}{m_{\eps}}\chi_{S_{\eps}}.
		\end{equation*}
		
		\noindent
		By construction, $\widetilde\gamma_x = \gamma_x = f$. We also have
		\begin{equation*}
			f+ \widetilde\gamma_y \leq f +g + \alpha(1-h)\chi_{S_{\eps}} \leq h + \alpha(1-h) = 1 - (1-h)(1-\alpha) \leq 1,
		\end{equation*}
		
		\noindent
		so that $\widetilde\gamma$ is admissible for $\Ups(f)$. We compute
		\[
		\int c \, d\widetilde\gamma - \int c \, d\gamma = \int c \, d\eta - \int c \, d\gamma^0
		\leq \alpha m_{\eps} \lt(\max_{\overline{B}_r \times  \overline{S}_{\eps}} c(x,y) - \min_{\overline{B}_r \times \overline{B}_r(y_0)} c(x,y)\rt).
		\]
		
		\noindent
		By continuity of $c$ there exists $r_\eps>0$ such that for  $0<r\le r_{\eps}$,
		\[ 
		\max_{\overline{B}_r \times  \overline{S}_{\eps}} c(x,y) \leq k( y_0) - \eps/2 \qquad \text{and} \qquad
		\min_{\overline{B}_r \times \overline{B}_r(y_0)} c(x,y) \geq k( y_0) - \eps/4.
		\]
		
		\noindent
		Thus for $0<r \leq r_{\eps}$,
		\[
		\int c \, d\widetilde\gamma - \int c \, d\gamma \leq -\alpha \eps m_{\eps}/4 <0,
		\]
		
		\noindent
		which contradicts the fact that $\gamma$ is a minimiser for $\Ups(f)$.		
	\end{proof}	
	
	\begin{theorem}
		\label{thm_saturation}
		Assume that $k$ satisfies~(H\ref{cont})\&(H\ref{cone}). For  $f \in L^1_m$, let $\gamma\in\PI_f$ be a minimiser of~\eqref{PPRd} and set $g := \gamma_y$. Then, defining
		\[
		E:=\{x : \exists\,  y \neq x\text{ such that } (x,y) \in \supp \gamma \text{ or } (y,x) \in \supp \gamma\},
		\]
		the set $E$ is Lebesgue measurable and we have the identity $g = (1-f)\chi_E + f\chi_{E^c}$.
	\end{theorem}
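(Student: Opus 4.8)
\emph{Setup and measurability.} Write $D:=\{(x,x):x\in\R^d\}$ and $\Sigma:=\supp\gamma\cap D^c$. Since $\supp\gamma$ is closed and $D^c$ open, $\Sigma$ is Borel, so its projections $\pi_1(\Sigma)$ and $\pi_2(\Sigma)$ are analytic, hence Lebesgue measurable; as $E=\pi_1(\Sigma)\cup\pi_2(\Sigma)$, the set $E$ is Lebesgue measurable. It then suffices to prove the two identities $g=f$ a.e.\ on $E^c$ and $g=1-f$ a.e.\ on $E$, after which the claimed formula follows.

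\emph{The identity on $E^c$.} By definition of $E$, if $(x,y)\in\supp\gamma$ and $x\notin E$ (resp.\ $y\notin E$) then $x=y$; equivalently $\Sigma\cap\big((E^c\times\R^d)\cup(\R^d\times E^c)\big)=\emptyset$. Since $\gamma$ is concentrated on $\supp\gamma$ and carries no mass on $D^c\setminus\Sigma$, this forces $\gamma\restr(E^c\times\R^d)$ and $\gamma\restr(\R^d\times E^c)$ to be concentrated on $D$, and being concentrated on $D$ each of them is in fact carried by $\{(x,x):x\in E^c\}$, so $\gamma\restr(E^c\times\R^d)=\gamma\restr(E^c\times E^c)=\gamma\restr(\R^d\times E^c)=:\gamma_0$. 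A measure concentrated on the diagonal has equal marginals, hence $f\chi_{E^c}=(\gamma_0)_x=(\gamma_0)_y=g\chi_{E^c}$, i.e.\ $g=f$ a.e.\ on $E^c$.

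\emph{The identity on $E_1:=\pi_1(\Sigma)$.} Set $h:=f+g$; it remains to show that $\{h<1\}\cap E$ is Lebesgue negligible. If $x_0\in E_1$, pick $y_0\neq x_0$ with $(x_0,y_0)\in\supp\gamma$; by (H\ref{cont})\&(H\ref{cone}) we have $k(y_0-x_0)>0=k(0)$, so $x_0$ lies in the open set $S(x_0,y_0)$, on which $h\equiv1$ a.e.\ by Lemma~\ref{lem_saturation}. Covering $E_1$ by such open sets and using the Lindel\"of property of $\R^d$ yields $h=1$ a.e.\ on $E_1$.

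\emph{The identity on $E_2\setminus E_1$, which is the main obstacle.} Arguing as in the $E^c$ step, $(x,y)\in\supp\gamma$ with $x\in E_2\setminus E_1$ and $x\neq y$ would force $x\in E_1$, so $\gamma\restr\big((E_2\setminus E_1)\times\R^d\big)$ is concentrated on $D$: the mass of $f$ over $E_2\setminus E_1$ does not move, and every off-diagonal plan reaching $E_2\setminus E_1$ emanates from $E_1$. To push further I would localise the problem: by Proposition~\ref{prop_Ups_max}, $|x-y|\le R$ on $\supp\gamma$, so $f$, $g$ and $\gamma$ live on a fixed closed ball $X$, and Proposition~\ref{prop_Ups_OT} provides Kantorovich potentials $(\psi^{\bar c},\psi^{\bar cc})$ for $\mathcal{T}_c(f,g)$ with $\psi:=\psi^{\bar cc}_{\,-}$ and $f+g\equiv1$ a.e.\ on $\{\psi^{\bar cc}<0\}$. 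Since $k\ge0$ and $\psi\le0$ one has $\psi^{\bar c}\ge0$, whence $\psi^{\bar cc}(y)=\inf_x\{k(y-x)-\psi^{\bar c}(x)\}\le -\psi^{\bar c}(y)\le0$ everywhere, with $\psi^{\bar cc}(x)=0\Rightarrow\psi^{\bar c}(x)=0$. Therefore $\{h<1\}\subseteq\{\psi^{\bar cc}=0\}$ up to a null set, and the complementary slackness identity $\psi^{\bar c}\oplus\psi^{\bar cc}=c$ on $\supp\gamma$ shows that no off-diagonal mass \emph{leaves} $\{\psi^{\bar cc}=0\}$ (if $(x_0,y_0)\in\supp\gamma$ with $\psi^{\bar cc}(x_0)=0$, then $\psi^{\bar cc}(y_0)=k(y_0-x_0)$ is both $\ge0$ and $\le0$, so $k(y_0-x_0)=0$ and $y_0=x_0$). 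Consequently $\{h<1\}\cap E$ reduces, up to a null set, to the subset of $\{\psi^{\bar cc}=0,\,h<1\}$ at which off-diagonal mass \emph{arrives}, and the heart of the matter is to prove this set negligible. This is where essentially all the difficulty lies: I expect it to require a perturbation argument in the spirit of Lemma~\ref{lem_saturation} and of~\cite{DePMSV}, exploiting the monotonicity hypothesis (H\ref{monot}) and the density condition (H\ref{cone}) on the level sets of $k$ near the contact set $\{\psi^{\bar cc}=0\}$, in order to turn a putative positive-mass configuration carried by $\Sigma\cap\big(\R^d\times\{\psi^{\bar cc}=0,\,h<1\}\big)$ into a strictly cheaper admissible competitor, contradicting the optimality of $\gamma$. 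Granting this, $h=1$ a.e.\ on $E=E_1\cup E_2$, which combined with $g=f$ a.e.\ on $E^c$ gives $g=(1-f)\chi_E+f\chi_{E^c}$.
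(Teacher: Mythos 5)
Your measurability argument, your treatment of $E^c$, and your treatment of $E_1=\pi_1(\Sigma)$ are all correct and essentially coincide with the paper's proof (the Lindel\"of covering is a clean substitute for the paper's Lebesgue-point formulation). The genuine gap is exactly where you place it: the points of $E_2=\pi_2(\Sigma)$ at which off-diagonal mass arrives and $h:=f+g<1$. But your expectation that this requires a new perturbation argument, and in particular hypothesis (H\ref{monot}), is mistaken --- the theorem assumes only (H\ref{cont})\&(H\ref{cone}), and (H\ref{cone}) is tailored precisely to close this case using Lemma~\ref{lem_saturation} alone, with no further competitor construction. The whole detour through Kantorovitch potentials is unnecessary.

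Here is the missing step. Let $y_0\in E_2$, so that $(x_0,y_0)\in\supp\gamma$ for some $x_0\ne y_0$, and suppose for contradiction that $h(y_0)<1$ with $y_0$ a point of Lebesgue density one of $\{h<1\}$ (almost every point of $\{h<1\}$ is of this type). Lemma~\ref{lem_saturation} gives $h\equiv1$ a.e.\ on $S(x_0,y_0)$, hence
\[
\lim_{r\to0}\frac{1}{r^d}\big|S(x_0,y_0)\cap B_r(y_0)\big|=0.
\]
On the other hand, $S(x_0,y_0)-x_0=\{z:\,k(z)<k(y_0-x_0)\}$, so hypothesis (H\ref{cone}) applied at the point $y_0-x_0\ne0$ states exactly that
\[
\limsup_{r\to0}\frac{1}{r^d}\big|S(x_0,y_0)\cap B_r(y_0)\big|>0,
\]
a contradiction. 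The key point you missed is that (H\ref{cone}) controls the density of the sublevel set $S(x_0,y_0)$ at the \emph{target} point $y_0$, not merely at $x_0$; once this is observed, $E_2\subset\{h=1\}$ up to a null set and the proof concludes as you indicate.
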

	\smallskip
	
	\begin{proof}~\\
		\textit{Step 1. A preliminary claim.}
		
		We first prove the following. Let $\mu, \nu \in \mathcal{M}_+(\R^d)$ be such that $\mu(\R^d) = \nu(\R^d)$, and let $\gamma \in \PI(\mu, \nu)$.  If we define the set
		\[
		\mathcal{A}(\gamma) := \{ x : \exists \, y \neq x\text{ such that }(x,y) \in \supp \gamma \},
		\]
		
		\noindent
		then $\mu \leq \nu$ on $\mathcal{A}(\gamma)^c$. 
		
		\medskip
		
		To prove the claim, let us first show that $\mathcal{A}(\gamma)$ is  $\mu$-measurable. We define
		\[
		\mathcal{D}(\gamma) := \supp \gamma \setminus \{(x,x): \, x \in \R^d\},
		\]
		
		\noindent
		which is a Borel set of $\R^d \times \R^d$. If we denote by $p_x : X \times X \to X$ the canonical projection on the first variable, we have
		\[
		p_x(\mathcal{D}(\gamma)) = \{x \in \R^d: \, \exists y \not = x, \, (x,y) \in \supp \gamma \} = \mathcal{A}(\gamma).
		\]
		
		\noindent
		Thus $\mathcal{A}(\gamma)$ is the image of a Borel set by a continuous map. By \cite[Proposition 2.2.13]{fed},  it is therefore $\mu-$measurable.
		
		We now show that $\mu \le \nu$ on $\mathcal{A}(\gamma)^c$. Let $\phi \in C_c(\R^d, \R_+)$. By definition of $\mathcal{A}(\gamma)$, if $(x,y) \in \supp \gamma$ and $x\in\mathcal{A}(\gamma)^c$ then $x=y$. Therefore
		\begin{align*}
			\int_{\mathcal{A}(\gamma)^c} \phi \, d\mu = \int  \phi(x) \chi_{\mathcal{A}(\gamma)^c}(x) \, d\gamma(x,y)
			&= \int \phi(y) \chi_{\mathcal{A}(\gamma)^c}(y) \chi_{\mathcal{A}(\gamma)^c}(x) \, d\gamma(x,y)\\
			&\le \int \phi(y) \chi_{\mathcal{A}(\gamma)^c}(y) \, d\gamma(x,y) =\int_{\mathcal{A}(\gamma)^c}\phi \, d\nu,
		\end{align*}
		and the claim is proved.
		
		\medskip
		
		\noindent
		\textit{Step 2. Construction of $E$.}
		
		We now consider an optimal exterior transport plan $\gamma$ for $\Ups(f)$ and set $g := \gamma_y$, $h := f+g$. By Proposition~\ref{prop_Ups_max}, $\gamma$ is an optimal transport plan from $f$ to $g$. Let $\overline\gamma$ be the image of $\gamma$ through the map $(x,y) \mapsto (y,x)$ and define
		\[
		E := \mathcal{A}(\gamma) \cup \mathcal{A}(\overline\gamma).
		\]
		
		\noindent
		We have $E^c=\mathcal{A}(\gamma)^c\cap\mathcal{A}(\overline\gamma)^c$ and by the first step there holds $f\le g$ and $g\le f$ almost everywhere on $E^c$. Hence, 
		\[
		g\chi_{E^c}=f\chi_{E^c}.
		\]
		To conclude the proof, we have to show that $g \equiv 1-f$ on $E$ or equivalently that up to Lebesgue negligible sets $\mathcal{A}(\gamma)$ and $\mathcal{A}(\overline\gamma)$ are included in $\{g = 1-f\}$. 
		
		On the one hand, if $x_0 \in \mathcal{A}(\gamma)$ is a Lebesgue point of both $f$ and $g$, there exists $y_0 \neq x_0$ such that $(x_0, y_0) \in \supp \gamma$. By Lemma~\ref{lem_saturation}, denoting
		\[
		S(x_0, y_0) := \{y \in \R^d, \, k(y-x_0) < k(y_0-x_0)\},
		\]
		we have $g = 1-f$ almost everywhere on $S(x_0, y_0)$. Notice that $S(x_0, y_0)$ is an open set and that $x_0 \in S(x_0, y_0)$ (since for $x \neq 0$, $k(x) >0=k(0)$). Hence $g(x_0) = 1-f(x_0)$ and $\mathcal{A}(\gamma) \subset \{g = 1-f\}$ up to a set of Lebesgue measure zero	.
		
		On the other hand, if $y_0 \in \mathcal{A} (\overline\gamma)$ there exists $x_0 \neq y_0$ such that $(x_0, y_0) \in \supp \gamma$. Let us assume by contradiction that $g(y_0) < 1 - f(y_0)$. Without loss of generality, we can assume that $y_0$ is a point of Lebesgue density one of $ \{g < 1 -f\}$.  Then
		\begin{equation*}
			\lim_{r \to 0} \frac{1}{r^d}\Big|\lt\{g = 1-f\rt\} \cap B(y_0, r) \Big| = 0.
		\end{equation*}
		
		\noindent
		Thus by Lemma~\ref{lem_saturation},
		\begin{equation*}
			\lim_{r \to 0} \frac{1}{ r^d}\lt|S(x_0, y_0) \cap B(y_0, r) \rt| = 0,
		\end{equation*}
		
		\noindent
		which contradicts~(H\ref{cone}) as  $y_0 \neq x_0$. Hence $g(y_0) = 1-f(y_0)$ and $\mathcal{A}(\overline\gamma) \subset \{g = 1-f \}$. This concludes the proof of the theorem.
	\end{proof}
	
	An important corollary is the uniqueness of the second marginal of minimisers of~\eqref{PPRd}.
	
	\begin{corollary}\label{coro_unique_g}
		Assume that $k$ satisfies~(H\ref{cont})\&(H\ref{cone}). Let $f \in L^1(\R^d)$. Then all minimisers $\gamma$ of $\Ups(f)$ have the same second marginal $\gamma_y$.
	\end{corollary}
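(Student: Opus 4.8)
The plan is to combine the saturation result of Theorem~\ref{thm_saturation} with a convexity argument. First observe that if $f$ does not take values in $[0,1]$ then $\PI_f=\emptyset$ and the statement is vacuous, so I may assume $f\in L^1_m$ with $m:=\int f\,$; by Proposition~\ref{prop_Ups_max} the set of minimisers of $\Ups(f)$ is then non-empty, and there is something to prove. Let $\gamma^1,\gamma^2$ be two minimisers of $\Ups(f)$ and write $g_i:=\gamma^i_y$ for $i\in\{1,2\}$.

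The crucial step is to produce a third, auxiliary minimiser by averaging: set $\gamma:=\tfrac12(\gamma^1+\gamma^2)$. The set $\PI_f$ is convex, since the marginal constraint $\gamma_x=f$ is affine and the constraint $\gamma_y\le 1-f$ is stable under convex combinations; hence $\gamma\in\PI_f$. As $\gamma\mapsto\int c\,d\gamma$ is linear, $\int c\,d\gamma=\tfrac12\bigl(\int c\,d\gamma^1+\int c\,d\gamma^2\bigr)=\Ups(f)$, so $\gamma$ is again a minimiser, with second marginal $\tfrac12(g_1+g_2)$. I would then apply Theorem~\ref{thm_saturation} separately to $\gamma^1$, $\gamma^2$ and $\gamma$, which gives that for almost every $x\in\R^d$ each of the numbers $g_1(x)$, $g_2(x)$ and $\tfrac12\bigl(g_1(x)+g_2(x)\bigr)$ lies in the set $\{f(x),\,1-f(x)\}$.

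The conclusion is then a pointwise case analysis on the full-measure set where all three inclusions hold. If $f(x)=1-f(x)$, then $g_1(x)=g_2(x)=f(x)$ directly. If $f(x)\neq 1-f(x)$, then the midpoint of two distinct reals is distinct from both, so the average of two elements of the two-point set $\{f(x),1-f(x)\}$ belongs to that set only when the two chosen elements coincide; hence $g_1(x)=g_2(x)$. In every case $g_1=g_2$ almost everywhere, which is the desired uniqueness. I do not expect a genuine obstacle here: the substance has already been carried out in Theorem~\ref{thm_saturation}, and the only point requiring a moment's care is the convexity of $\PI_f$ together with the linearity of the cost, which is precisely what allows the averaged plan to be fed back into the saturation theorem.
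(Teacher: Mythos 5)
Your proof is correct and coincides with the paper's argument: the paper also averages the two minimisers, applies Theorem~\ref{thm_saturation} to all three plans, and concludes by the same pointwise two-point-set case analysis (phrased there via the value $1/2$). Nothing further is needed.
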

	\smallskip
	
	\begin{proof}~\\
		We let $\gamma, \gamma'$ be two minimisers of~\eqref{PPRd} and define $\widetilde \gamma := (\gamma + \gamma')/2$ which also minimises~\eqref{PPRd}. We denote $g := \gamma_y$, $g' := \gamma'_y$ and $\widetilde{g} := \widetilde{\gamma}_y$ and introduce the set
		\[
		F := \{x \in \R^d: g(x), g'(x), \widetilde{g}(x) \in \{f(x), 1-f(x)\}\}.
		\]
		Assuming by contradiction that $g(x) \not = g'(x)$ for some $x \in F$ , we have $1/2 = \widetilde{g}(x) \in \{f(x), 1-f(x)\}$ so that $f(x) = 1/2$ and $g(x) = g'(x) = 1/2$, which is absurd. Hence $g = g'$ on $F$ and since $F$ is of full measure by Theorem~\ref{thm_saturation}, the proof is complete.
	\end{proof}
	
	\subsection{Preliminary results for the existence of a maximiser of~\eqref{MP}.}\hfill
	
	We now gather results which, combined with Theorem~\ref{thm_saturation}, allow us to prove existence of a maximiser for both \eqref{maxE} and \eqref{MP}.\\
	
	We first establish a corollary of Theorem~\ref{thm_saturation} regarding the monotonicity of the sum of the marginals of solutions to~\eqref{PPRd}.
	
	\begin{corollary}
		\label{coro_monot_f}
		Assume that $k$ satisfies~(H\ref{cont})\&(H\ref{cone}). Let $m >0$, let $f_1,f_2 \in L^1_m$ be such that $f_1 \leq f_2$ and let $\gamma^1, \gamma^2$ be respective minimisers of $\Ups(f_1)$ and $\Ups(f_2)$. Then setting $g_1 := \gamma^1_y $ and $g_2 := \gamma^2_y$, we have 
		$f_1+ g_1 \le f_2+ g_2.$
	\end{corollary}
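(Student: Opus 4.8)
The plan is to lift the problem to the compact setting of Section~\ref{Xcompact}, where optimal dual potentials are available, and then to compare the two sums $h_i := f_i + g_i$ through the sign structure of these potentials, the delicate point being the region where both potentials vanish. By Corollary~\ref{coro_unique_g} the marginals $g_1$ and $g_2$ do not depend on the chosen minimisers, so it suffices to prove the inequality for these canonical functions. First I would reduce to compactly supported data: with $f_i^n := f_i\chi_{B_n}$ one has $f_1^n\le f_2^n$ and $f_i^n\to f_i$ in $L^1$, hence $\Ups(f_i^n)\to\Ups(f_i)$ by Proposition~\ref{prop_basic_ups}~(iii); the optimal plans $\gamma^{i,n}$ are tight since their supports lie in $\{|x-y|\le R(m)\}$ (Proposition~\ref{prop_Ups_max}) and their first marginals $f_i^n$ are tight, so up to a subsequence $\gamma^{i,n}\st*\rightharpoonup\gamma^i$ with $\gamma^i$ admissible and, by lower semicontinuity of the cost, optimal for $\Ups(f_i)$; Corollary~\ref{coro_unique_g} then forces $g_i^n=\gamma^{i,n}_y\st*\rightharpoonup g_i$. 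Since a pointwise inequality between nonnegative $L^1$ densities passes to weak-$*$ limits, it is enough to treat the case where $f_1,f_2$ are supported in a fixed ball, and then Proposition~\ref{prop_Ups_max} lets us restrict both problems to a compact ball $X=\overline{B}_R$.

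On $X$ I would take the maximal optimal potentials $\psi_i:=\psi_{f_i}$ provided by Proposition~\ref{prop_pot_monot}. Since $f_1\le f_2$ we get $\psi_1\ge\psi_2$, and applying successively the $\bar c$- and the $c$-transform (whose composition is order preserving) yields $u_1\ge u_2$ for $u_i:=\psi_i^{\bar c c}$. By Proposition~\ref{prop_Ups_OT}, $(\psi_i^{\bar c},u_i)$ is a pair of Kantorovich potentials for $\mathcal{T}_c(f_i,g_i)=\Ups(f_i)$ and one has $h_i\equiv 1$ on $\{u_i<0\}$, $h_i\equiv f_i$ on $\{u_i>0\}$, while always $0\le f_i\le h_i\le 1$. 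This already settles $h_1\le h_2$ away from $N:=\{u_1=0\}\cap\{u_2=0\}$: on $\{u_1>0\}$ one has $h_1=f_1\le f_2\le h_2$; on $\{u_1<0\}\subset\{u_2<0\}$ one has $h_1=1=h_2$; and on $\{u_1=0,\,u_2<0\}$ one has $h_2=1\ge h_1$ (note $\{u_1=0<u_2\}=\emptyset$ since $u_2\le u_1$).

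The main obstacle is the set $N$, on which the potentials convey no sign information — this is the delicate step announced in the introduction. There I would invoke the saturation theorem: by Theorem~\ref{thm_saturation}, a.e.\ on $N$ one has $h_i\in\{2f_i,1\}$, so the inequality can only fail on a subset of $N$ where simultaneously $x\in E_1$ (hence $h_1=1$ and $g_1=1-f_1$) and $x\in E_2^c$ (hence $h_2=2f_2<1$, so $f_1(x)\le f_2(x)<1/2$). Using $u_1\equiv 0$ on $N$ together with $c\ge 0$ and $c(x,x)=0$, one checks $\psi_1^{\bar c}\le 0$ on $N$; complementary slackness for $\gamma^1$ then shows that such an $x$ cannot be a pure source (that would force $f_1(x)\ge 1/2$) and that any mass sent into $x$ by $\gamma^1$ must originate in $\{u_1<0\}\subset\{u_2<0\}$, so $x$ is a strict net importer, whereas in the plan $\gamma^2$ the point $x$ is completely inert and carries slack $1-2f_2(x)>0$. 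Turning this tension into a contradiction — for instance by rerouting the imported mass into that slack to build a strictly cheaper competitor for $\Ups(f_1)$, or through a finer density/potential analysis on $N$ — is the heart of the argument and the step I expect to be hardest; it is also where the proof must go strictly beyond \cite{DePMSV}. Once $N$ is dealt with, $h_1\le h_2$ a.e.\ on $X$, and undoing the reduction of the first paragraph gives $f_1+g_1\le f_2+g_2$ a.e.\ on $\R^d$.
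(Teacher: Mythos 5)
Your reduction to the compact case and your sign analysis via the maximal potentials of Proposition~\ref{prop_pot_monot} and Proposition~\ref{prop_Ups_OT} are sound and match the paper's strategy (the paper organises the case distinction around the set $F=\{h_1>h_2\}$ and the saturation sets $E_i$ of Theorem~\ref{thm_saturation} rather than around the signs of $u_1,u_2$, but the content is the same: the only problematic region is where $u_1=u_2=0$, $h_1=1$ and $h_2=2f_2<1$). However, you have not proved the statement: you explicitly leave open the step of showing that this bad subset of $N$ is Lebesgue-null, calling it ``the heart of the argument''. That is indeed the heart of the argument, so the proposal has a genuine gap. Moreover, the exit you sketch (rerouting the mass imported at $x$ into the slack $1-2f_2(x)$ to build a cheaper competitor) does not work as stated: the slack $1-f_2-g_2>0$ lives in the problem $\Ups(f_2)$, while the imported mass belongs to $\gamma^1$, for which there is no slack at $x$ since $h_1(x)=1$; and Lemma~\ref{lem_saturation} already encodes the fact that no cheaper rerouting of an optimal plan is available, so this direction is essentially circular.

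The paper closes the gap by a rigidity argument on the dual side, and this is precisely where hypothesis (H\ref{cone}) enters. Let $G$ be the set of points $y_0$ with $u_1(y_0)=0$, $h_1(y_0)=1$ and $h_2(y_0)<1$; on $F$ one checks $g_1>f_1$, so (by the first step of the proof of Theorem~\ref{thm_saturation}) almost every $y_0\in G$ receives mass, i.e.\ there is $x_0\neq y_0$ with $(x_0,y_0)\in\supp\gamma^1$. If $|G|>0$, pick such a $y_0$ of positive density; by (H\ref{cone}) the strict sublevel set $S(x_0,y_0)=\{y: k(y-x_0)<k(y_0-x_0)\}$ meets $G$ in positive measure, so there is $\widetilde y\in G\cap S(x_0,y_0)$. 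Complementary slackness gives $\psi_1^{\bar c}(x_0)+u_1(y_0)=k(y_0-x_0)$ while the constraint gives $\psi_1^{\bar c}(x_0)+u_1(\widetilde y)\le k(\widetilde y-x_0)<k(y_0-x_0)$; since $u_1(y_0)=u_1(\widetilde y)=0$ this is a contradiction, hence $|G|=0$. Consequently $u_1<0$ a.e.\ on the bad set, hence $u_2<0$ there by your potential comparison, hence $h_2=1$ there by Proposition~\ref{prop_Ups_OT}, contradicting $h_2<1$. This is the missing step you would need to supply.
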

	\smallskip	
	
	\begin{proof}~\\	
		Let $f_1, f_2 \in L^1_m$ be such that $f_1 \leq f_2$. In the first three  steps of the proof, we additionally assume that they are compactly supported. This condition is relaxed in the fourth and final step.\\
		By Proposition~\ref{prop_Ups_max}, we can assume that the ambient space is a compact ball $\overline{B}_R$. Let $\gamma^1, \gamma^2$ be minimisers for $\Ups(f_1)$ and $\Ups(f_2)$ respectively. For $i \in \{1,2\}$ we define $g_i:= \gamma^i_y$, $h_i:=f_i+g_i$ and set
		\[
		F:=\{h_1 > h_2\}.
		\]
		
		\noindent
		We shall prove that $|F|=0$. By Theorem~\ref{thm_saturation}, there exists $E_1, E_2 \subset \overline{B}_R$ such that
		\begin{equation*}
			h_1 = \chi_{E_1} + 2 f_1 \chi_{E_1^c} \qquad \text{and} \qquad h_2 = \chi_{E_2} + 2f_2 \chi_{E_2^c}.
		\end{equation*}
		
		\noindent
		Since $h_2\ge0$,  $h_1\le 1$ and  $h_2\ge f_2 \geq f_1$ we have
		\begin{equation}\label{honF}
			h_1>0,\qquad h_2=2f_2<1\qquad\text{and}\qquad f_1 < 1\quad  \text{on } F.
		\end{equation}
		
		\noindent
		\textit{Step 1. $|E_1^c \cap F|=0$.}
		
		By definition of $E_1$ we have $h_1= 2f_1$ on $E_1^c$ and by~\eqref{honF} we have $h_2 = 2f_2$  on $F$ and since $f_1\le f_2$ we get $h_1\le h_2$ on $E_1^c \cap F$. This contradicts the definition of $F$, hence $E_1^c \cap F  =\emptyset$ and in particular $|E_1^c \cap F | = 0$. Notice that as a consequence $h_1=1$ on $F$.
		\medskip
		
		\noindent
		\textit{Step 2. Intermediate claim.}
		
		Let $\psi_1$ be the maximal potential for $\Ups^*(f_1)$ given by Proposition~\ref{prop_pot_monot}. We define 
		\[
		G := \{\psi_1^{\bar cc} =0\}\cap E_1 \cap F
		\] 
		
		\noindent
		and claim that $|G| = 0$. Let us assume by contradiction that $|G|>0$. First notice that on $F$, 
		\[
		f_1 +g_1 =h_1> h_2 = 2f_2,
		\]
		so that 
		\[
		g_1 > 2f_2 - f_1\geq f_2 \geq f_1.
		\]
		Thus 
		\begin{equation}\label{Gsubg1f1}
			G\subset E_1 \cap F \subset \{ g_1 > f_1 \}.
		\end{equation}
		Now recall that by Theorem~\ref{thm_saturation},
		\[
		E_1=\{x : \exists y \neq x\text{ such that } (x,y) \in\supp\gamma^1\text{ or }(y,x)\in\supp\gamma^1\}.
		\]
		
		\noindent
		Together with~\eqref{Gsubg1f1} we obtain that for almost every $y_0 \in G$ there exists $x_0 \neq y_0$ with $(x_0, y_0) \in \supp \gamma^1$. Without loss of generality, we assume that $y_0$ is a point of positive density of $G$ and we set
		\[
		S(x_0, y_0) := \lt\{y \in \R^d: \, k(y-x_0) < k(y_0 - x_0) \rt\}.
		\]
		
		\noindent
		By~(H\ref{cone}), we have $|G \cap S(x_0, y_0)|> 0$. Let now $\widetilde{y} \in G \cap S(x_0, y_0)$. By Proposition~\ref{prop_Ups_OT}, $(\psi^{\bar c}_1, \psi^{\bar cc}_1)$ forms a pair of Kantorovitch potentials for the optimal transport from $f_1$ to $g_1$. Thus
		\[
		\psi^{\bar c}_1 (x_0) + \psi^{\bar cc}_1(y_0) = k(y_0-x_0) \qquad \text{and} \qquad \psi^{\bar c }_1(x_0) + \psi^{\bar cc}_1(\widetilde{y}) \leq k(\widetilde{y}-x_0).
		\]
		
		\noindent
		However, $y_0, \widetilde{y} \in G$, so that $\psi^{\bar cc}_1(y_0) = \psi^{\bar cc}_1(\widetilde{y}) = 0$, hence
		\[
		\psi^{\bar c}_1 (x_0) = k(y_0-x_0) \qquad \text{and} \qquad \psi^{\bar c }_1(x_0) \leq k(\widetilde{y}-x_0).
		\] 
		Eventually, as $\widetilde{y} \in S(x_0, y_0)$, we conclude that
		\[
		\psi^{\bar c}_1 (x_0) \leq k(\widetilde{y}-x_0) < k(y_0-x_0) = \psi^{\bar c}_1 (x_0),
		\]		
		\noindent
		obtaining a contradiction. Thus $|G|=0$, which is the claim.
		
		\medskip
		
		\noindent
		\textit{Step 3. $|E_1 \cap F|=0$.}
		
		By Proposition~\ref{prop_Ups_OT}, 
		\[
		\{\psi^{\bar cc}_1 > 0\} \subset \{g_1 = 0\} \quad\text{ so that }\quad \{g_1 > 0\} \subset \{\psi^{\bar cc}_1 \leq 0\}.
		\]
		We observe that $g_1 = 1-f_1$ on $E_1 \cap F$. By~\eqref{honF}, $E_1 \cap F \subset \{ g_1 > 0\}$ and by the previous step, $E_1 \cap F \subset \{\psi^{\bar cc}_1 \neq 0\}$, hence $\psi_1^{\bar cc} < 0$ almost everywhere on $E_1 \cap F$. 
		
		Let $\psi_2$ be the maximal potential for $\Ups^*(f_2)$ given by Proposition~\ref{prop_pot_monot}. As $f_1 \leq f_2$, we have $\psi_1 \geq \psi_2$ so that $\psi^{\bar cc}_1 \geq \psi^{\bar cc}_2$. Thus 
		\[
		\psi^{\bar cc}_2 < 0 \text{ on } E_1 \cap F.
		\]
		By Proposition~\ref{prop_Ups_OT} we deduce that 
		\[
		h_2 = g_2 + f_2 = 1 \text{ on } E_1 \cap F.
		\] 
		But since $h_2 < 1$ on $F$ we get that $|E_1 \cap F | = 0$ and with the first step we conclude that $|F| = 0$.
		
		\medskip
		
		\noindent
		\textit{Step 4. Extension to the non-compact case}.
		
		Let $f_1,f_2 \in L^1_m$ be such that $f_1 \leq f_2$. For $i \in \{1, 2\}$, we set $f_{i, R} = f_i \chi_{B_R}$, consider $\gamma^i_R$ an optimal exterior transport plan for $\Ups(f_{i, R})$ and set $g_{i, R} := (\gamma^i_R)_y$. Applying the previous steps to $f_{1,R}$ and $f_{2, R}$, we obtain
		\begin{equation}\label{compactineq}
			f_{1,R} + g_{1,R} \leq f_{2,R} + g_{2,R}.
		\end{equation}
		
		\noindent
		For $i \in\{1, 2\}$, $f_{i,R}$ $L^1$-converges to $f_i$ as $R \to \infty$. By Proposition~\ref{prop_basic_ups} $(iii)$,  $\Ups(f_{i, R}) \to \Ups(f_i)$ as $R \to \infty$. Additionally, $\gamma^i_R$ admits a subsequence converging weakly-$*$ to some $\widetilde{\gamma}^i$ admissible for $\Ups(f_i)$. By lower semi-continuity of $\gamma \mapsto \smallint c \, d \gamma$ with respect to weak-$*$ convergence, we get
		\[
		\int c \, d\widetilde{\gamma^i} \leq \liminf_R \int c \, d\gamma^i_R = \liminf_R \Ups(f_{i,R}) = \Ups(f_i).		
		\]
		
		\noindent
		Hence $\widetilde{\gamma}^i$ is optimal for $\Ups(f_i)$, so that by Corollary~\ref{coro_unique_g}, $\widetilde{\gamma}^i_y = g_i$. Finally, as $\gamma^i_R\st*\rightharpoonup \widetilde{\gamma}^i$ as $R \to \infty$, $g_{i,R}$ converges in duality with $C_b(\R^d)$ to $g_i$ as $R \to \infty$. Multiplying~\eqref{compactineq} by $\phi \in C_c(\R^d, \R_+)$, integrating and passing to the limit we obtain that for any $\phi \in C_c(\R^d, \R_+)$,
		\[
		\int (f_1 + g_1) \phi\, \leq \int (f_2+g_2) \phi\,.
		\]
		
		\noindent
		Hence $f_1 + g_1 \leq f_2 + g_2$ which completes the proof.
	\end{proof}
	
	We now prove that $\E$ is strictly superadditive.	
	
	\begin{proposition}\label{prop_mu_inc} Assume that $k$ satisfies~(H\ref{cont})\&(H\ref{monot}). Let $m \in (0, \infty)$ and define $e(m) :=\E(m)/m$. Then, $e$ is increasing on $(0, \infty)$. As a consequence, given $0 < m' < m$,
		\[
		\E(m') + \E(m-m') < \E(m).
		\]
	\end{proposition}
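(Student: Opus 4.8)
The plan is to exploit the behaviour of $\Ups$ under dilations. Fix $0 < m' < m$ and set $\lambda := (m/m')^{1/d} > 1$. Given $f \in L^1(\R^d,[0,1])$ with $\int f = m'$, put $f_\lambda(z) := f(z/\lambda)$, so that $0 \le f_\lambda \le 1$ and $\int f_\lambda = m$. I first claim $\Ups(f_\lambda) \ge \lambda^d\,\Ups(f)$. Indeed, given $\gamma \in \PI_{f_\lambda}$, the measure $\gamma'$ obtained by pushing $\lambda^{-d}\gamma$ forward through $(x,y) \mapsto (x/\lambda,y/\lambda)$ has first marginal $f$ and second marginal $\le 1-f$ (a direct change of variables), hence $\gamma' \in \PI_f$, and
\[
\int c\,d\gamma' \;=\; \lambda^{-d}\!\int k\!\Big(\tfrac{y-x}{\lambda}\Big)\,d\gamma(x,y) \;\le\; \lambda^{-d}\!\int k(y-x)\,d\gamma(x,y) \;=\; \lambda^{-d}\!\int c\,d\gamma,
\]
the inequality coming from $|(y-x)/\lambda| \le |y-x|$ and the monotonicity~(H\ref{monot}). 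Infimising over $\gamma \in \PI_{f_\lambda}$ gives $\Ups(f) \le \lambda^{-d}\Ups(f_\lambda)$. Since $f_\lambda$ competes in $\E(m)$, taking the supremum over $f$ yields $\E(m) \ge \lambda^d\E(m') = \tfrac{m}{m'}\E(m')$, i.e. $e(m) \ge e(m')$.

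To make this strict I would quantify the loss above. First, $\E(m') > 0$: testing $\Ups$ against the characteristic function of a ball of volume $m'$ and using that $k > 0$ away from the origin (a consequence of~(H\ref{cont}) and the strict monotonicity~(H\ref{monot})) shows this ball has positive energy. Now fix $\eps \in (0,\tfrac12\E(m')]$, choose $f$ with $\int f = m'$ and $\Ups(f) \ge \E(m') - \eps$, and let $\gamma$ be a minimiser of $\Ups(f_\lambda)$; by Proposition~\ref{prop_Ups_max} it has mass $m$ and is supported in $\{|x-y| \le R\}$ with $R = R(m)$. From $\int c\,d\gamma = \Ups(f_\lambda) \ge \lambda^d\Ups(f) \ge \tfrac12\lambda^d\E(m') > 0$, together with
\[
\int c\,d\gamma \;\le\; m\,\max_{|z|\le\delta}k(z) \;+\; \gamma\big(\{|x-y|\ge\delta\}\big)\,\max_{|z|\le R}k(z)
\]
for any $\delta > 0$, one gets (choosing $\delta$ small, using $k(0)=0$ and continuity of $k$) a bound $\gamma(\{|x-y|\ge\delta\}) \ge c_0$ for a constant $c_0 = c_0(m,m') > 0$ independent of $f$. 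On the compact annulus $\{\delta \le |z| \le R\}$ the continuous map $z \mapsto k(z) - k(z/\lambda)$ is strictly positive (since $\lambda > 1$ and $k$ is strictly increasing along rays); let $\kappa_0 > 0$ be its minimum. Keeping this gain when comparing $\gamma$ with its dilated pullback $\gamma'$ gives
\[
\Ups(f_\lambda) - \lambda^d\Ups(f) \;\ge\; \int_{\{\delta\le|x-y|\le R\}}\!\Big(k(y-x) - k\big(\tfrac{y-x}{\lambda}\big)\Big)\,d\gamma \;\ge\; \kappa_0\,c_0 .
\]
Since $\kappa_0 c_0$ does not depend on $f$, letting $\eps \to 0$ gives $\E(m) \ge \lambda^d\E(m') + \kappa_0 c_0 > \tfrac{m}{m'}\E(m')$, i.e. $e(m) > e(m')$. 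Thus $e$ is increasing on $(0,\infty)$.

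The superadditivity then follows at once: for $0 < m' < m$ both $m'$ and $m - m'$ lie in $(0,m)$, so $e(m') < e(m)$ and $e(m-m') < e(m)$, whence
\[
\E(m') + \E(m-m') = m'\,e(m') + (m-m')\,e(m-m') < m\,e(m) = \E(m).
\]

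I expect the main obstacle to be the second paragraph. For a \emph{fixed} $f$ with $\Ups(f) > 0$ the dilation argument already gives the strict inequality $\Ups(f_\lambda) > \lambda^d\Ups(f)$, but this gain could in principle degenerate to $0$ when passing to the supremum defining $\E(m')$; the real work is to bound the gain from below uniformly over near-maximisers, which is what the a priori support bound $|x-y| \le R$ for optimal plans (Proposition~\ref{prop_Ups_max}) and the quantitative strictness of~(H\ref{monot}) on compact annuli provide.
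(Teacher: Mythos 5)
Your proposal is correct and follows essentially the same route as the paper: dilation of $f$ by $\lambda=(m/m')^{1/d}$ plus (H\ref{monot}) to get $e(m)\ge e(m')$, then a uniform lower bound (via $\E(m')>0$, $k(0)=0$, and the support bound $R(m)$ from Proposition~\ref{prop_Ups_max}) on the mass an optimal plan must move a distance at least $\delta$, combined with the positive minimum of $k(z)-k(z/\lambda)$ on the compact annulus, to make the inequality strict uniformly over near-maximisers. The only cosmetic difference is that the paper runs the mass-concentration estimate on the pulled-back (admissible but not necessarily optimal) plan for $f$ rather than on the optimal plan for $f_\lambda$; the content is identical.
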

	
	\begin{proof}~\\		
		Let $M>m>0$. We have to establish that $\E(m) < (m/M)\E(M)$. \medskip
		
		\noindent
		\textit{Step 1. $\E(m) \leq (m/M)\E(M)$. }
		
		\noindent
		For $R>0$ we set
		\[
		\Gamma_R:=\{(x,y) \in \R^d \times \R^d: |x-y|> R\}.
		\]
		
		\noindent
		Let $0 \leq \eps < \E(m)/2$ and $f \in L^1_m$ of mass exactly $m$ and such that $\Ups(f)\ge \E(m)-\eps$. We denote $\lambda:=(M/m)^{1/d} >1$ and we set 
		\[
		f_{\lambda}(x):=f(x/\lambda)\qquad\text{for }x\in\R^d,
		\]
		
		\noindent
		so that $\int f_{\lambda}\,=M$. Let $\gamma^{\lambda}$ be an optimal transport plan for $\Ups(f_{\lambda})$. We define a Radon measure $ \gamma$ by
		\[
		\int \xi(x,y)\, d\gamma(x,y) := \frac{m}{M}\int \xi(x/\lambda,y/\lambda) \,d\gamma^{\lambda}(x,y)\qquad\text{for }\xi\in C_c(\R^d \times \R^d).
		\]
		\noindent
		Observe that $\gamma$ is admissible for $\Ups(f)$.
		By Proposition~\ref{prop_Ups_max}, there exists $R_{\lambda} = R_{\lambda}(M)$ such that $\gamma^{\lambda}(\Gamma_{R_{\lambda}})=0$.  Setting $R := R_{\lambda}/\lambda$, we then have
		\begin{equation}\label{defR}
			\gamma(\Gamma_R) = \frac{m}{M} \gamma^{\lambda}(\Gamma_{R_{\lambda}}) = 0.
		\end{equation}
		
		\noindent
		Let us define
		\[
		\kappa(r):=\min \lt\{ k(z)-k(z/\lambda) :r \leq |z|\leq R_{\lambda} \rt\}.
		\]
		
		\noindent
		As $\lambda > 1$ we have by~(H\ref{monot}) that $\kappa(r) > 0$ for $0 <r < R_{\lambda}$. Additionally, $k(z/\lambda) \leq k(z)$ for any $z \in \R^d$. Consequently, for any $0 <r < R_{\lambda}$,
		\begin{align*}
			\Ups(f) &\le\int k(y-x)\,d \gamma(x,y)\\
			&= \frac{m}{M}\int k\lt(\dfrac{y-x}\lambda\rt)\,d\gamma^{\lambda}(x,y)\\
			&= \frac{m}{M} \int \lt[ k\lt(\dfrac{y-x}\lambda\rt)-k(y-x) \rt] d\gamma^{\lambda}(x,y) + \frac{m}{M} \int k(y-x) \, d \gamma^{\lambda}(x,y) \\
			&\le \frac{m}{M} \int_{\Gamma_r} \lt[k\lt(\dfrac{y-x}\lambda\rt)-k(y-x) \rt] d\gamma^{\lambda}(x,y) + \frac{m}{M}\E(M).
		\end{align*}
		
		\noindent
		In the integral over $\Gamma_r$, the term in brackets is smaller than $-\kappa(r)$. Hence, for every $0 < r < R_{\lambda}$
		\begin{equation}\label{prf_prop_mu_inc_1}
			\E(m)-\eps\le \Ups(f) \leq \frac{m}{M}\E(M)- \frac{m}{M}\kappa(r)\gamma^{\lambda}(\Gamma_r).
		\end{equation}
		
		\noindent
		At this point we can send $\eps$ to 0 and deduce that $\E(m) \leq (m/M)\E(M)$. However we need to establish a strict inequality. For this we prove in the next step that there exist $r^*,\delta>0$ not depending on $\eps$ or $f$ such that $\gamma^\lambda(\Gamma_{r^*})\ge \delta$.
		
		\medskip
		
		\noindent
		\textit{Step 2. Conclusion.} 
		
		\noindent
		For $r\ge 0$, we set
		\[
		\overline{k}(r):=\max\{k(z):|z|\le r\}.
		\]
		
		\noindent
		This function is increasing,  continuous and there holds $\overline{k}(0)=0$.  Notice that using a ball of mass $m$ as a candidate for the energy $\E(m)$, we see that $\E(m) > 0$ for any $m>0$. Let us fix $0 < r^* < R$ such that
		\begin{equation}\label{prf_prop_mu_inc_2}
			m\overline{k}(r^*)\le \E(m)/4.
		\end{equation}
		
		\noindent
		By~\eqref{defR} we have  $|x-y| \leq R$ for $(x,y) \in \supp \gamma$ and by definition $|x-y|\leq r^*$ for $(x,y)\not\in\Gamma_{r^*}$. We deduce
		\begin{align*}
			\frac{\E(m)}{2} <\Ups(f) 
			\le \int_{\Gamma_{r^*}} c \, d\gamma + \int_{\Gamma_{r^*}^c} c \, d\gamma
			&\le\gamma(\Gamma_{r^*})\overline{k}(R)+\lt(m-\gamma(\Gamma_{r^*})\rt)\overline{k}({r^*})\\
			&\st{\eqref{prf_prop_mu_inc_2}}{\le}\gamma(\Gamma_{r^*})\overline{k}(R)+\lt(m-\gamma(\Gamma_{r^*})\rt)\frac{\E(m)}{4m}.
		\end{align*}
		\noindent
		This implies 
		\begin{equation*}
			\gamma(\Gamma_{r^*}) \lt(\overline{k}(R) - \frac{\E(m)}{4m}\rt) > \frac{\E(m)}{4}.
		\end{equation*}
		Thus  $4m\overline{k}(R) > \E(m)$ and
		\[
		\frac{m}{M}\gamma^{\lambda}(\Gamma_{\lambda r^*})=\gamma(\Gamma_{r^*})\ge\frac{m\E(m)}{4m\overline{k}(R)-\E(m)}=:m^*>0.
		\]
		
		\noindent
		Plugging this in~\eqref{prf_prop_mu_inc_1} with $r= \lambda r^*<R_\lambda$ we obtain
		\[
		\E(m)-\eps\le\frac{m}{M}\E(M)-m^*\kappa(\lambda r^*).
		\]
		
		\noindent
		Since $\eps\in[0,\E(m)/2]$ is arbitrary and $m^*\kappa(\lambda r^*)>0$, this proves the proposition.
	\end{proof}
	
	We close this subsection with a lemma establishing that if a function $f$ nearly maximises $\E(m)$ for some $m>0$ then there exists a cube which is at least half filled by $f$.
	
	\begin{lemma}\label{lem_concentration}
		Let $m>0$. There exists a non-decreasing function $r_0 : m \mapsto r_0(m)$ such that for $m>0$ and $f \in L^1_m$ with $\Ups(f)\ge \E(m)/2$, there exists a cube $Q_0$ of side-length $r_0(m)$ such that:
		\[
		\int_{Q_0}f\,\ge \frac{|Q_0|}2.
		\]
	\end{lemma}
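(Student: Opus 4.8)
The plan is to establish the contrapositive: if $f\in L^1_m$ is such that \emph{no} cube of side-length $r_0(m)$ carries at least half of its volume as $f$-mass, then $f$ is diffuse enough that the available room $1-f$ can absorb the whole mass of $f$ at short range, producing a cheap exterior transport plan and forcing $\Ups(f)<\E(m)/2$.

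First I would fix the threshold. Recall the function $\overline k(r):=\max\{k(z):|z|\le r\}$ from the proof of Proposition~\ref{prop_mu_inc}: by~(H\ref{cont}) it is continuous and non-decreasing with $\overline k(0)=0$ and $\overline k(r)\to\infty$ as $r\to\infty$. Recall also that $e(m)=\E(m)/m$ is positive (a ball of mass $m$ is an admissible competitor) and, by Proposition~\ref{prop_mu_inc}, increasing. I then set
\[
r_0(m):=\sup\lt\{r\ge0:\overline k\big(\sqrt d\,r\big)\le e(m)/4\rt\}.
\]
Since $\overline k(0)=0<e(m)/4$ and $\overline k\to\infty$, we get $0<r_0(m)<\infty$; by continuity of $\overline k$, $\overline k(\sqrt d\,r_0(m))\le e(m)/4$; and since $e$ is non-decreasing so is $m\mapsto r_0(m)$, as required.

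Now suppose, for contradiction, that $f\in L^1_m$ satisfies $\Ups(f)\ge\E(m)/2$ while $\int_Q f<|Q|/2$ for every cube $Q$ of side-length $r_0(m)$. Fix a partition $(Q_i)_{i\ge0}$ of $\R^d$ into half-open cubes of side-length $r_0(m)$. On each $Q_i$ we have $\int_{Q_i}(1-f)=|Q_i|-\int_{Q_i}f\ge\int_{Q_i}f$, so there is $g_i\in L^1(\R^d,\R_+)$ with $g_i\le(1-f)\chi_{Q_i}$ and $\int g_i=\int_{Q_i}f$ (take $g_i$ proportional to $(1-f)\chi_{Q_i}$). With the convention $0/0=0$, set $\gamma_i:=\big(\int_{Q_i}f\big)^{-1}\,(f\chi_{Q_i})\otimes g_i$; then $(\gamma_i)_x=f\chi_{Q_i}$, $(\gamma_i)_y=g_i$, and $\gamma_i$ is supported in $Q_i\times Q_i$, so $|y-x|\le\diam Q_i=\sqrt d\,r_0(m)$ on $\supp\gamma_i$ and hence $\int c\,d\gamma_i\le\overline k(\sqrt d\,r_0(m))\int_{Q_i}f$.

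Finally I would glue. The measure $\gamma:=\sum_{i\ge0}\gamma_i$ has first marginal $\sum_i f\chi_{Q_i}=f$ and second marginal $\sum_i g_i\le\sum_i(1-f)\chi_{Q_i}=1-f$ — here it is crucial that the $Q_i$ partition $\R^d$ and that each $g_i$ is supported in $Q_i$ — so $\gamma\in\PI_f$. Summing the cost estimates and using $\int f\le m$,
\[
\Ups(f)\le\int c\,d\gamma\le\overline k\big(\sqrt d\,r_0(m)\big)\int f\le\frac{e(m)}{4}\,m=\frac{\E(m)}{4}<\frac{\E(m)}{2},
\]
which contradicts $\Ups(f)\ge\E(m)/2$ (recall $\E(m)>0$). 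Hence some cube $Q_0$ of side-length $r_0(m)$ must satisfy $\int_{Q_0}f\ge|Q_0|/2$. There is no genuine obstacle here: the only two points needing care are choosing $r_0$ as a \emph{non-decreasing} function of $m$, which rests on the monotonicity of $e$ from Proposition~\ref{prop_mu_inc}, and checking that the locally built plans assemble into a globally admissible one, which follows because the cubes partition $\R^d$.
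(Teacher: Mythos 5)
Your proof is correct and is essentially the paper's own argument: the same contradiction with a partition into cubes of side-length $r_0(m)$, the same local plans $\gamma_i$ transporting $f\chi_{Q_i}$ onto a sub-density of $(1-f)\chi_{Q_i}$, the same cost bound $\overline k(\sqrt d\,r_0)m$, and the same choice $\overline k(\sqrt d\,r_0)\le \E(m)/(4m)$. Your explicit verification that $r_0$ is non-decreasing via the monotonicity of $e$ from Proposition~\ref{prop_mu_inc} is a detail the paper leaves implicit, but it changes nothing in the structure of the proof.
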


	\begin{proof}~\\
		Let $r_0>0$ to be fixed later and assume by contradiction that there exists a partition $\mathcal{Q}$ of $\R^d$ in cubes with side-length $r_0$ such that for every $Q\in\mathcal{Q}$, 
		\begin{equation*}
			\int_Qf\,<\frac{|Q|}{2}.
		\end{equation*}
		The strategy to get a contradiction from this hypothesis is to build an exterior transport plan for $f$ with too small transport cost. Let $Q\in\mathcal{Q}$. Since $\smallint_Q (1-f)\,\ge\smallint_Qf\,$ there exists a function $g_Q\ge0$ supported in $Q$ such that $\smallint g_Q\,=\smallint_Qf\,$ and $f \chi_Q + g_Q \leq 1$. We then set
		
		\begin{equation*}
			\gamma_Q:= f \chi_Q \otimes \frac{g_Q}{\int_Q f\,} \qquad \text{and} \qquad \gamma := \sum_{Q \in \mathcal{Q}} \gamma_Q.
		\end{equation*}
		Notice that $\gamma$ is a valid competitor for $\Ups(f)$. Next for $R>0$, 	we define \[\overline{k}(R) := \max \{ k(x), \, |x| \le R\}.\]
		We compute:
		\begin{multline}\label{E(m)<kbar}
			0 < \frac{\E(m)}{2} \leq \Ups(f) \leq \sum_{Q \in \mathcal{Q}} \int_{Q \times Q} k(y-x) \, d \gamma_Q\\
			\le \overline{k}\lt(\sqrt{d}r_0\rt) \sum_{Q \in \mathcal{Q}} \int_Q f\, = \overline{k}\lt(\sqrt{d}r_0\rt)\int f\, \le \overline{k}\lt(\sqrt{d}r_0\rt) m.
		\end{multline}
		Remarking that $\overline{k}: \R_+ \to \R_+ $ is continuous at $0$, increasing and with $\overline{k}(0) = 0$, we set
		\[
		r_0 := \max \lt\{ r>0 :  \overline{k}\lt(\sqrt{d}r\rt) \leq \frac{\E(m)}{4m}\rt\}\ >0
		\]
		and obtain a contradiction with~\eqref{E(m)<kbar}. This concludes the proof.
	\end{proof}

	\subsection{Existence of a maximiser for~\eqref{MP}}\hfill
	
	In the following subsection, we assume that~(H\ref{cont}),(H\ref{cone})\&(H\ref{monot}) hold and prove the existence of maximisers for~\eqref{MP}.
	
	\medskip
	We only have to prove that maximising sequences for $\E(m)$ are tight. However our result is more precise. We obtain that if $f$ nearly maximises $\E(m)$ then almost all its mass concentrates in a closed ball with radius $R_*=R_*(m)$. In the limit, maximisers are supported in such balls.
	
	\begin{proposition}\label{prop_strong_tightness}
		Let $m > 0$. There exist $R_* = R_*(m)>0$, $\eps_0 = \eps_0(m)>0$ non-decreasing in $m$ with the following property. Let $0 < \eps \le \eps_0$ and let $f \in L^1_m$ such that $\smallint f = m$ and $\Ups(f) \geq \E(m) - \eps$, then up to a translation there holds
		\[
		\int_{\R^d\setminus B_{R_*}}f\,\le\frac{2m}{\E(m)}\eps.
		\]
	\end{proposition}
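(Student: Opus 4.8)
The strategy is to make the concentration-compactness alternative quantitative by iterating a decay inequality for the tail mass $N_f(\rho):=\int_{\{|x|>\rho\}}f$. Write $D:=R(m)$ for the displacement bound of Proposition~\ref{prop_Ups_max}, $C:=C(m)$ for the Lipschitz constant of Proposition~\ref{prop_basic_ups}\,(iii) and $e(t):=\E(t)/t$. By Proposition~\ref{prop_mu_inc}, $e$ is strictly increasing on $(0,\infty)$; moreover $e(0^+)=0$, since under (H\ref{cont})\&(H\ref{cone}) every $g\in L^1_t$ satisfies $\Ups(g)\le t\,\overline k\bigl(D(t)\bigr)$ with $\overline k(\rho):=\max_{|z|\le\rho}k(z)$ and $\overline k(D(t))\to0$ as $t\to0$. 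Consequently there is $m^\sharp\in(0,m)$ with $e(m^\sharp)=e(m)/2$. We take $\eps_0\le\E(m)/2$, to be shrunk further below; it will come out non-decreasing in $m$ because $\E(m),D(m),C(m),m^\sharp$ and the constant $m_*$ introduced next are all monotone in $m$.

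Given $\eps\le\eps_0$ and $f$ as in the statement, $\Ups(f)\ge\E(m)/2$, so Lemma~\ref{lem_concentration} applies: after a translation, $\int_{B_{\rho_0}}f\ge m_*$ with $\rho_0:=\sqrt d\,r_0(m)$ and $m_*:=r_0(m)^d/2$. For $\rho\ge\rho_0$ split $f=f_1+f_2$, $f_1:=f\chi_{B_\rho}$, $f_2:=f\chi_{\R^d\setminus B_\rho}$, with masses $m_1\ge m_*$ and $m_2=N_f(\rho)=m-m_1$. The crux is the decoupling estimate
\[
\Ups(f)\ \le\ \E(m_1)+\E(m_2)+C\,\mu_f(\rho),\qquad\mu_f(\rho):=\int_{\{\rho-D\le|x|\le\rho+D\}}f\,.
\]
To prove it one glues minimisers $\gamma^1,\gamma^2$ of $\Ups(f_1),\Ups(f_2)$. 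Their second marginals $g_1\le1-f_1$ and $g_2\le1-f_2$ have displacement $\le D$ (Proposition~\ref{prop_Ups_max}), hence are supported in $\overline B_{\rho+D}$ and $\R^d\setminus B_{\rho-D}$; thus $g_1+g_2$ can violate $g_1+g_2\le1-f$ only on the shell $\{\rho-D\le|x|\le\rho+D\}$, where the total excess is bounded by the $g_1$-mass landing outside $B_\rho$ plus the $g_2$-mass landing inside $B_\rho$ --- a total of at most $\mu_f(\rho)$ by the displacement bound. Trimming this excess together with the corresponding source mass ($\le\mu_f(\rho)$) and re-routing the latter into the free set $\{1-f>0\}$ just beyond $B_{\rho+D}$ --- possible at cost $\le C\mu_f(\rho)$ by the construction in Proposition~\ref{prop_Ups_max} --- yields an admissible plan for $\Ups(f)$; combined with $\Ups(f_i)\le\E(m_i)$ this gives the estimate. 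The monotonicity of the sum of marginals (Corollary~\ref{coro_monot_f}) applied to $f_1\le f$ and $f_2\le f$ is used here to compare $g_1,g_2$ with the marginals of the optimal plan for $f$ and so confine the overlap to that thin shell. (Equivalently: discard the shell to create a genuine gap, split via Proposition~\ref{prop_basic_ups}\,(ii), and pay $C\mu_f(\rho)$ for the discarded mass via (iii).)

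Now feed in strict super-additivity~\eqref{stricsubadd}. Since $m_1+m_2=m$ and $e$ is increasing,
\[
\E(m)-\E(m_1)-\E(m_2)=m_1\bigl(e(m)-e(m_1)\bigr)+m_2\bigl(e(m)-e(m_2)\bigr)\ \ge\ m_2\bigl(e(m)-e(m_2)\bigr),
\]
which is $\ge m_2\bigl(e(m)-e(m-m_*)\bigr)$ in general and $\ge\tfrac{\E(m)}{2m}\,m_2$ as soon as $m_2\le m^\sharp$. Combined with $\Ups(f)\ge\E(m)-\eps$ and the decoupling estimate, this gives a relation of the form $\kappa\,N_f(\rho)\le\eps+C\bigl(N_f(\rho-D)-N_f(\rho+D)\bigr)$, hence --- using that $N_f$ is non-increasing --- a contraction recursion $N_f(s+2D)\le(1-\theta)N_f(s)+\theta\,\eps/\kappa$ with $\theta\in(0,1)$ and fixed point $\eps/\kappa$. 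Iterating from $\rho_0$, first with $\kappa=e(m)-e(m-m_*)>0$ and then, once the tail has dropped below $m^\sharp$, with the sharp value $\kappa=\tfrac{\E(m)}{2m}$, drives $N_f$ to its fixed point $\tfrac{2m}{\E(m)}\eps$ for $R_*=R_*(m)$ large enough, which is the claim since $\int_{\R^d\setminus B_{R_*}}f=N_f(R_*)$.

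The main obstacle is the decoupling estimate. In classical binding-inequality arguments one is minimising with prescribed marginals and the two pieces interact only through a cost decaying with their separation; here the second marginal is free, so the images of the partial plans for $f_1$ and $f_2$ may overlap and compete for the available capacity $1-f$. Controlling this is exactly where Corollary~\ref{coro_monot_f} is needed: together with the uniform displacement bound of Proposition~\ref{prop_Ups_max} it localises the overlap to a shell of width $\lesssim D$, whose $f$-mass is then made small by choosing the splitting radius inside a band of radii whose width governs $R_*$; after that, Proposition~\ref{prop_basic_ups}\,(iii) turns that small mass into a small admissible cost and the convexity of $e$ from Proposition~\ref{prop_mu_inc} closes the argument.
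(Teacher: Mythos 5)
Your overall architecture (concentration via Lemma~\ref{lem_concentration}, a splitting estimate, strict superadditivity of $\E$) parallels the paper's Steps 1--3, and the decoupling estimate itself is correct in the clean form you give in parentheses: discard the shell $\{\rho-D\le|x|\le\rho+D\}$ to create a gap of width $\ge 2D$, apply Proposition~\ref{prop_basic_ups}\,(ii) to the two remaining pieces, and pay $C\,\mu_f(\rho)$ for the discarded mass via (iii). (Corollary~\ref{coro_monot_f} is not actually needed for this.) The genuine gap is in the final iteration. From
\[
\kappa\,N_f(s)\ \le\ \eps+C\bigl(N_f(s-D)-N_f(s+D)\bigr)
\]
one only gets, setting $u_n:=N_f(\rho_0+(2n+1)D)$, the recursion $u_{n+1}\le\tfrac{\eps}{\kappa+C}+\theta u_n$ with $\theta=C/(\kappa+C)\in(0,1)$, hence $u_n\le \eps/\kappa+\theta^{\,n}m$. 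The fixed point is approached only geometrically, and the additive error $\theta^{\,n}m$ does not scale with $\eps$: the recursion is consistent with profiles such as $N_f(s)=\eps/\kappa+m\,\theta^{(s-\rho_0)/(2D)}$, for which $N_f(R_*)-\eps/\kappa$ is of order $m\,\theta^{(R_*-\rho_0)/(2D)}\gg\eps$ once $\eps$ is small. So your argument yields the stated tail bound only for $R_*\gtrsim D\log(m/\eps)$, whereas the proposition requires $R_*=R_*(m)$ independent of $\eps$; the two-stage choice of $\kappa$ does not repair this, since the second stage has the same defect.

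The obstruction is structural: your error term $C\mu_f(\rho)$ involves mass on \emph{both} sides of the splitting sphere (the inner half of the shell belongs to $m_1$, not to $m_2=N_f(\rho)$) and carries a constant $C$ of order $\overline{k}$ of a macroscopic length, so it can never be absorbed into the superadditivity gain $\kappa N_f(\rho)$, which is proportional to the outer mass only. The paper avoids this in two ways. First, the ``inside'' region $\Om$ is chosen adaptively as the (thickened) union of small cubes $Q$ of side $r_j$ on which $\int_Q(f+g)\ge|Q|/4$; the leftover mass outside these cubes can then be re-routed \emph{locally}, at cost $\overline{k}(\sqrt d\,r_j)\le\E(m)/(4m)$ per unit mass, so the error is $\le\tfrac{\kappa}{2}(m-m_\Om)$, i.e.\ proportional to the genuinely exterior mass with a constant tuned to be absorbable --- this gives $m-m_\Om\le\frac{2m}{\E(m)}\eps$ in one shot, with no iteration. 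Second, the boundedness of $\diam\Om$ is obtained by a separate combinatorial argument: each cluster of good cubes carries a definite mass $\ge 2^{-jd-2}|Q_0|$, so splitting off a cluster costs an energy deficit bounded below by a positive constant depending only on $m$ (not on $\eps$), which strict superadditivity forbids for $\eps\le\eps_0(m)$; mass counting then bounds the number of cubes and hence the diameter. To salvage your route you would need some mechanism of this kind --- a quantum of mass whose separation costs a fixed, $\eps$-independent amount of energy --- rather than a decay estimate for the tail.
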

	
	\begin{proof}~\\
		\textit{Outline of the proof.}\\
		(Step 1) We start by using Lemma~\ref{lem_concentration} to get a collection $\mathcal{Q}_0$ of cubes $Q$ of side-length $r_0=r_0(m)$ such that  $\int_Q f+g\,\ge|Q|/2$. We denote $\Om_0:=\cup\mathcal{Q}_0$. We also consider the set $\Om$ obtained by thickening $\Om_0$ by adding the cubes closer than some distance $R=R(m)$. The real $R$ is chosen so that no mass of $f\chi_{\Om_0}$ is sent outside $\Om$ by any optimal exterior transport plan of $f$.\smallskip\\		
		(Step 2) We build an exterior transport plan for $f$ whose cost is very close to $\Ups(f \chi_\Om)$. \smallskip\\
		(Step 3) Next, we show that $\Om$ concentrates almost all the mass of $f$.  Using the strict superadditivity of $m \mapsto \E(m)$  and the previous step, we deduce that $m_\Om :=\smallint f \chi_\Om$ is close to $m$.\smallskip\\
		(Step 4) Eventually, we show that the distance between cubes in $\mathcal{Q}_0$ is uniformly bounded. As the cardinal of $\mathcal{Q}_0$ is also bounded, we conclude that the diameter of $\Om$ is bounded by a distance only depending on $m$.
		
		\medskip
		
		\noindent
		\textit{Step 1. Construction of a collection of cubes on which $\smallint_Q(f+g)\,\ge|Q|/2$.}
		
		Let $m>0$ and $f$ as in the statement of the proposition and assume that $\Ups(f)\ge \E(m)/2$ so that 
		\begin{equation}\label{defeps} 
			\eps:=\E(m)-\Ups(f)\le\E(m)/2. 
		\end{equation}
		Let  $\gamma$ be a minimiser for $\Ups(f)$ and let us set $g := \gamma_y$. Let $r_0$ and $Q_0$ be given by Lemma~\ref{lem_concentration}. We denote by $\hat{\mathcal{Q}}$ the regular partition of $\R^d$ into cubes of side-length $r_0$ such that $Q_0 \in \hat{\mathcal{Q}}$. For $j \geq 0$ to be fixed later, we set $r_j:=2^{-j}r_0$.  Considering the partition $\mathcal{Q}$ of $\R^d$ into cubes of side-length $r_j$ obtained by refining $\hat{\mathcal{Q}}$, we define $\mathcal{Q}_0$ as the subset formed by the elements $Q\in\mathcal{Q}$ such that
		\[
		\int_Q (f+g) \,\ge\frac{|Q|}4.
		\]
		
		\noindent
		We remark that $\mathcal{Q}_0$ is not empty since
		\[
		\int_Q(f+g)\,\ge\int_Qf\,\ge\frac{|Q|}2 
		\]
		for at least one of the $2^j$ sub-cubes of $Q_0$ in the partition $\mathcal{Q}$. 
		
		\medskip
		
		Let us define $\Om_0:=\cup\mathcal{Q}_0$. By Proposition~\ref{prop_Ups_max}, there exists $R = R(m)$ such that $|x-y| \leq R$ on $\supp \gamma$. We denote by $\mathcal{Q}_R$ the collection of cubes $Q \in\mathcal{Q}$ such that $d(Q, \Om_0)\le R$, and by $\Om$ their union. By construction, there holds $\gamma(\Om_0 \times\Om^c)=0$. We now define
		\[
		f_\Om:= f \chi_\Om, \qquad \text{ and }\qquad m_\Om :=\int f_\Om\,, 
		\]
		and we let $\gamma_\Om$ be an optimal exterior transport plan for $f_\Om$, that is $\gamma\in\varPi_{f_\Om}$ with $\int c\,d\gamma_\Om= \Ups(f_\Om)$. We then set $g_\Omega:=(\gamma_{\Om})_y$.
		\smallskip
		
		By Proposition~\ref{prop_Ups_max} again, we have (since $m_\Om\le m$) that 
		\begin{equation}\label{gammaOmega}
			\gamma_\Om(\Om_0 \times \Om^c) = 0.
		\end{equation}
		
		\noindent
		\textit{Step 2. Building a transport plan for $f$ whose cost is close to $\Ups(f_\Om)$.}
		
		In this step we modify $\gamma_\Om$ to build an exterior transport plan $\gamma$ for $f$ with a cost close to $\Upsilon(f_\Om)$.
		More precisely, we require that for some constant $C = C(r_j)>0$ with $C(r_j)\to 0$ as $r_j\to 0$,
		\begin{equation*}
			\int c \, d \gamma - \int c \, d\gamma_\Om \leq C(m-m_\Om).
		\end{equation*} 
		The proof is a refinement of the proof of the Lipschitz continuity of $\Ups$, see Proposition~\ref{prop_basic_ups}~(iii). In the following we define successively the plans $\gamma^0$, $\gamma^1$, $\gamma^2$, $\gamma^3$ which satisfy in particular
		\[
		\supp\gamma^0\subset\overline\Om\times\overline\Om,\qquad
		\supp\gamma^1\subset\overline\Om\times\overline{\Om^c},\qquad
		\supp\gamma^2\subset\overline{\Om\setminus\Om_0}\times\overline{\Om\setminus\Om_0},\qquad
		\supp\gamma^3\subset\overline{\Om^c}\times\overline{\Om^c}.
		\]
		First we set $\gamma^0:=\gamma_\Om\restr\Om\times\Om$ and denote $f^0:=\gamma^0_x$, $g^0:=\gamma^0_y$. We build the three remaining plans in the following substeps. These constructions will satisfy  
		\[
		(\gamma^1+\gamma^2)_x=f_\Om-\gamma^0_x=f\chi_\Om-f^0\qquad\text{and}\qquad\gamma^3_x=f-f_\Om=f\chi_{\Om^c}.
		\]
		We will set eventually $\widehat\gamma:=\gamma^0+\gamma^1+\gamma^2+\gamma^3$ which will be an admissible transport plan for $f$. The difficulty is to preserve the constraint $f+\widehat\gamma_y\le1$ while controlling the cost.
		
		\smallskip
		\noindent
		\textit{Step 2.a. Construction of $\gamma^1$.}
		
		Let us denote $\gamma_\Om^1:=\gamma_\Om\restr\Om\times\Om^c$,  $f_\Om^1:=(\gamma_\Om^1)_x$ and $g_\Om^1:=(\gamma_\Om^1)_y=\chi_{\Om^c}g_\Om$. We can not rule out the possibility that $f+g_\Om>1$ in some part of $\Om^c$ so that we cannot set $\gamma^1=\gamma_\Om^1$. However, we will transport as much as possible mass through $\gamma_\Om^1$. Let us define
		\[
		u:=(f+g_\Om-1)_+,
		\]
		which corresponds to the excess mass transported through $\gamma^1_\Om$. Using the convention $0/0=0$, we define $\gamma^1$ by 
		\[
		d\gamma^1(x,y):=\frac{g_\Om(y)-u(y)}{g_\Om(y)}d\gamma^1_\Om(x,y).
		\] 
		At this point, we have
		\begin{equation}\label{estim_cost_0+1}
			\int c\,d(\gamma^0+\gamma^1)\le \int c\, d\gamma_\Om=\Upsilon(f_\Om).
		\end{equation}
		Moreover setting $f^1:=\gamma^1_x$ and $g^1:=\gamma^1_y$, there holds $\supp g^1\subset\overline{\Om^c}$. Notice that since $f_\Om\le f$, by Corollary~\ref{coro_monot_f} we have $f_\Om+g_\Om\le f+g$, so that $g_\Om \le f+ g$ in $\Om^c$ which implies $g^1\le f+g$. Thus
		\begin{equation}\label{intQf+Q1}
			\int_Q (f+g^1)\,\le\int_Q (2f+g)\,<\frac{|Q|}2,\qquad\text{for every }Q\in \mathcal{Q}\setminus\mathcal{Q}_R,
		\end{equation}
		where we used  the definition of $\mathcal{Q}_0$ and the fact that $[\mathcal{Q}\setminus\mathcal{Q}_R] \cap \mathcal{Q}_0=\emptyset$.\\
		Let us compute for later use the mass from $\Om$ that  still requires to be transported. By construction
		\begin{equation}\label{massleft}
			\int_\Om (f -f^0-f^1) \,=\int\, d(\gamma^1_\Om-\gamma^1)=\int\frac{u(y)}{g_\Om(y)}\,d\gamma_\Om^1(x,y)
			=\int_{\Om^c}(f+g_\Om-1)_+\,\le\int_{\Om^c}f\,.
		\end{equation}
		
		\noindent
		\textit{Step 2.b. Construction of $\gamma^2$.}
		
		We now define 
		\[
		\gamma^2_\Om := \gamma^1_\Om - \gamma^1 = \gamma_\Om-\gamma^0 - \gamma^1.
		\]
		Notice that by~\eqref{gammaOmega}, $\gamma^1_\Om(\Om_0 \times \Om^c) = 0$, so that
		\[
		\supp \gamma^2_\Om\subset\overline{\Om\setminus\Om_0}\times\overline{\Om^c}. 
		\]
		In particular, $f^2:=f_\Om-f^0-f^1$ is supported in $\overline{\Om\setminus\Om_0}$. Let $Q\in\mathcal{Q}_R\setminus\mathcal{Q}_0$. Since $g^0\le g_\Om$ and $f=f_\Om$ on $Q$, using  Corollary~\ref{coro_monot_f} again we see that
		\[
		\int_Q(f+g^0)\,\le\int_Q(f_\Om+g_\Om)\,\le\int_Q(f+g)\,\le\frac{|Q|}4.
		\]
		Therefore for such $Q$ there exists a function $g^2_Q:Q\to\R_+$ such that $f+g^0+g^2_Q\le1$ and $\int g^2_Q\,=\int_Qf^2\,$. Defining
		\[
		\gamma^2_Q:=\frac1{\int_Qf^2\,}\lt[\chi_Q f^2\rt]\otimes g^2_Q\quad\text{ for }Q\in\mathcal{Q}_R\setminus\mathcal{Q}_0\quad\text{ and then } \quad\gamma^2:=\sum_{Q\in\mathcal{Q}_R\setminus\mathcal{Q}_0} \gamma^2_Q,
		\]
		we have $\gamma^2_x=f^2$ and $g^2:=\gamma^2_y=\sum_Q g^2_Q$. Hence
		\begin{equation}\label{sumO2le1}
		f+g^0+g^2\le 1
		\end{equation}
and
		\[
		\int c\,d\gamma^2 \le\lt(\int f^2\,\rt)\overline{k}\lt(\sqrt dr_j\rt),
		\]
		where as in the proof of Proposition~\ref{prop_mu_inc} we denote $\overline{k}(r):=\max\{k(x):|x|\le r\}$.\\
		By construction $f^2=f_\Om-f^0-f^1$, so by~\eqref{massleft} there holds $\int f^2\,\le m-m_\Om$ which leads to the cost estimate
		\begin{equation}\label{estim_cost_2}
			\int c\,d\gamma^2\le(m-m_\Om)\overline{k}\lt(\sqrt d r_j\rt).
		\end{equation}
		\smallskip
		
		\noindent
		\textit{Step 2.c. Construction of $\gamma^3$.}
		
		We still have to transport the mass corresponding to $\chi_{\Om^c}f$. For every $Q\in\mathcal{Q}\setminus\mathcal{Q}_R$ we have $\int_Qf\,\le\int_Q(f+g)\,\le|Q|/4$, therefore, in view of~\eqref{intQf+Q1}, there exists a function $g^3_Q:Q\to\R_+$ such that $\int g^3_Q\,=\int_Q f\,$ and $f+g^1+g^3_Q\le1$. As in the previous step, we define
		\[
		\gamma^3_Q:=\frac1{\int_Qf\,}\lt[\chi_Q f\rt]\otimes g^3_Q \qquad\text{and}\qquad
		\gamma^3:=\sum_{Q\in\mathcal{Q}\setminus\mathcal{Q}_R} \gamma^3_Q.
		\]
		By construction, $(\gamma^3)_x=\chi_{\Om^c}f$ and denoting $g^3:=(\gamma^3)_y$, we have $\supp g^3\subset\overline{\Om^c}$ as well as
		\begin{equation}\label{sum13le1}
		f+g^1+g^3\le1.
		\end{equation}
Moreover,
		\begin{equation}\label{estim_cost_3}
			\int c\,d\gamma^3\le\lt(\int_{\Om^c}f\,\rt)\overline{k}\lt(\sqrt dr_j\rt)=(m-m_\Om)\overline{k}\lt(\sqrt dr_j\rt).
		\end{equation}
		\smallskip
		
		\noindent
		\textit{Step 2.d. Conclusion : definition and properties of $\widehat\gamma$.}
		
		Eventually, we set $\widehat\gamma:=\gamma^0+\gamma^1+\gamma^2+\gamma^3$ and $\widehat g:=\widehat\gamma_y$.  Recalling that $ \supp (g^0+g^2)\subset \overline{\Om}$, $\supp(g^1+g^3)\subset \overline{\Om}^c$, \eqref{sumO2le1} and \eqref{sum13le1}, there holds $\widehat\gamma_x=f^0+f^1+f^2+f^3=f$ and $f+\widehat g\le1$ so that $\widehat\gamma$ is an admissible exterior transport plan for $f$. Besides, collecting the estimates~\eqref{estim_cost_0+1},\eqref{estim_cost_2}\&\eqref{estim_cost_3} we get
		\begin{equation}\label{Upsf<UpsfOmega+}
			\Upsilon(f)\le\int c\, d\widehat\gamma\le\Upsilon(f_\Om)+2(m-m_\Om)\overline{k}\lt(\sqrt dr_j\rt).
		\end{equation}
		\smallskip
		
		\noindent
		\textit{Step 3. We show that $m - m_\Om \leq C(m) \eps$ (recall the definition~\eqref{defeps} of $\eps$).}
		
		As $\Ups(f_\Om) \leq \E(m_\Om)$ and $\E(m) - \eps =\Ups(f)$,~\eqref{Upsf<UpsfOmega+} yields
		\[
		\E(m)-\eps\le \E(m_\Om)+ 2 \overline{k}\lt(\sqrt{d} r_j\rt)(m-m_\Om).
		\]
		
		\noindent
		Additionally by Proposition~\ref{prop_mu_inc}, $\E(m_\Om)  \leq \frac{m_\Om}{m} \E(m)$. Hence
		\[
		\lt(\frac{\E(m)}{m}- 2 \overline{k}\lt(\sqrt{d} r_j\rt)\rt)(m-m_\Om)\le \eps.
		\]
		
		\noindent
		By continuity of $k$, $\overline{k}(\sqrt{d} r_j) \to 0$ as $r_j \to 0$. Recalling that $r_j = 2^{-j} r_0$, we fix $j\ge0$ as the first integer such that $\overline{k}(\sqrt{d} r_j) \le \E(m)/4m$ (notice that $j$ does not depend on $\eps$). Therefore
		\begin{equation}\label{prf_lem_tight_3}
			m-m_\Om\le \dfrac{2m\eps}{\E(m)}.
		\end{equation}
		This yields
		\begin{equation}\label{prf_lem_tight_35}
			\int_{\R^d \setminus\Om}f \,=\int f\, - \int f_\Om\, = m-m_\Om\le\dfrac{2m \eps}{\E(m)}.
		\end{equation}
		
		\noindent
		For future use, let us also notice that injecting~\eqref{prf_lem_tight_3} into~\eqref{Upsf<UpsfOmega+} we obtain
		\begin{equation}\label{energyf}
			\E(m) - \eps = \Ups(f) \leq \Ups(f_\Om) + \eps.
		\end{equation}
		
		\medskip
		
		\noindent
		\textit{Step 4 : Bounding the diameter of $\Om$.}
		
		We finally prove that $\Om$ is uniformly bounded which would conclude the proof.
		For $Q_-$, $Q_+\in\mathcal{Q}_0$, we write $Q_-\sim Q_+$ if there exists a finite chain
		\begin{equation}\label{prf_lem_tight_4}
			Q_-=Q_0,Q_1,\dots,Q_n=Q_+
		\end{equation}
		
		\noindent
		such that $Q_i\in\mathcal{Q}_0$ and $d(Q_{i-1},Q_i)\le 4R +\sqrt{d}r_j$ for $1\le i\le n$. This defines an equivalence relation. Let us show that there exists only one equivalence class. We assume by contradiction that there exist at least two equivalence classes, and we let $\mathcal{C}^1$ be one of these classes and $\mathcal{C}^2$ be the union of the remaining classes. For $i \in \{1, 2\}$, we then define $\Om^i$ to be the union of the cubes $Q$ such that $d(Q, \mathcal{C}^i)\le R$. By construction, $d(\Om_1,\Om_2)>2R$. Recalling that $\Om$ is the union of the cubes $Q$ such that $d(Q, \Om_0) \leq R$, we have $\Om^1\cup\Om^2=\Om$.
		
		For $i \in \{1, 2\}$, we set $f_\Om^i:= f_\Om \chi_{\Om^i}$ and $m_\Om^i=\int f_\Om^i\,$. We have $m_\Om^1+m_\Om^2=m_\Om\le m$ and $m_\Om^1,m_\Om^2\ge 2^{-jd}|Q_0|/4=2^{-jd-2}|Q_0|$. Additionally, by Proposition~\ref{prop_basic_ups} $(ii)$,
		\[
		\Ups(f_\Om)=\Ups(f^1_\Om)+\Ups(f^2_\Om)\le \E(m^1_\Om)+\E(m^2_\Om).
		\]
		
		\noindent
		Injecting this inequality into~\eqref{energyf} yields
		\[
		\E(m) - \eps = \Ups(f) \leq \Ups(f_\Om) +\eps\leq \E(m^1_\Om)+\E(m^2_\Om)+\eps.
		\]
		
		\noindent
		Recalling that  $e(m)=\E(m)/m$, this rewrites as
		\begin{equation}\label{ineqmu}
			m e(m) \leq m^1_\Om e(m^1_\Om)+m^2_\Om e(m^2_\Om)+2\eps.
		\end{equation}
		
		\noindent
		As $m^1_\Om+m^2_\Om\le m$ and for $i \in \{1, 2\}$, $m^i_\Om\ge2^{-jd-2}|Q_0|$, we have $m^i_\Om \leq m - 2^{-jd-2}|Q_0|$. Recall that by Proposition~\ref{prop_mu_inc}, $e$ is increasing, so that $e(m^i_\Om) \leq e(m-2^{-jd}m_0)$. Hence
		\[
		m^1_\Om e(m^1_\Om)+m^2_\Om e(m^2_\Om) \leq m e\lt(m-2^{-jd}m_0\rt).
		\]
		
		\noindent
		With~\eqref{ineqmu}, we obtain
		\[
		m e (m) \leq me\lt(m-2^{-jd}m_0\rt) + 2\eps,
		\] 
		
		\noindent
		which is absurd for $\eps$ small enough because $e$ is increasing. It follows that for $\eps>0$ small enough the relation $\sim$ has a single class. Recall that for all $Q \in \mathcal{Q}_0$, $\smallint_Q (f+g)\, \,\geq 2^{-jd-2}|Q_0|$. Thus the maximal length of a chain in~\eqref{prf_lem_tight_4} without any repetition is bounded by $N := \lfloor 2^{jd+3}m/|Q_0|\rfloor$. Therefore, the diameter of $\Om$ is bounded by $(4R +2\sqrt{d}r_j)(N+1)$ with $r_j$ and $N$ only depending on $m$, the dimension $d$ and the cost $c$. Together with~\eqref{prf_lem_tight_35} this proves the proposition.
	\end{proof}
	
	We can now apply the direct method of Calculus of Variations to establish the existence of a maximiser for~\eqref{MP}.
	
	\medskip
	
	\begin{proof}[Proof of Theorem~\ref{thm_MP_max}]~\
		
		Let  $f_n$ be a maximising sequence for~\eqref{MP} and let $R_* = R_*(m)$ be given by Proposition~\ref{prop_strong_tightness} so that  up to translation,
		\begin{equation*}
			\int_{\R^d \setminus B_{R_*}} f_n\,  \to 0 \quad \text{as} \quad n \to \infty.
		\end{equation*}
		
		\noindent
		Therefore, $f_n$ is a tight sequence of $\mathcal{M}_+(\R^d)$ and up to extraction of a subsequence it converges weakly-$*$ to $f$ where  $f$ is admissible for~\eqref{MP}. By Proposition~\ref{prop_basic_ups} $(iv)$,
		
		\begin{equation*}
			\Ups(f) = \lim \Ups(f_n) = \E(m),
		\end{equation*}
		
		\noindent
		so that $f$ is a maximiser for $\E(m)$.
		
		\medskip
		
		Let now $f$ be any maximiser of $\E(m)$. Applying Proposition~\ref{prop_strong_tightness} to $f$ we have that up to a translation $\supp f \subset \overline B_{R_*}$. This concludes the proof.
	\end{proof} 
	Let us show that when $f$ is compactly supported there exist Kantorovitch potentials for the problem~\eqref{PPRd} (this is the situation of interest as we have just established that the maximisers of $\E(m)$ are compactly supported in $\R^d$).
	\begin{lemma}\label{lem_ctransfglobal}
		Let $m >0$ and assume that $f \in L^1_m$ is compactly supported. Let $R = R(m)$ be given by Proposition~\ref{prop_Ups_max} such that all minimisers $\gamma$ of $\Ups(f)$ are supported in $X:= \supp f + \overline{B}_R$. Then, there exists a pair $(\vhi, \psi) \in C_c(\R^d) \times C_c(\R^d)$ optimal for $\Ups^*(f)$. Additionally, $\vhi = \psi^c$, $\psi = \vhi^c_{\,-}$ and both $\vhi$ and $\psi$ are compactly supported in $X$.
	\end{lemma}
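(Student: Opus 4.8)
\textit{Proof strategy.} The plan is to transfer to $\R^d$ the existence of optimal potentials obtained in the compact setting of Section~\ref{Xcompact}, the only real work being to make the potentials compactly supported (inside $X$).

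First I would reduce to a compact ambient space. Since $f$ is compactly supported, Proposition~\ref{prop_Ups_max} together with the compactness of $\supp\gamma$ gives, for any minimiser $\gamma$ of $\Ups(f)$, a radius $R'<R$ with $|x-y|\le R'$ on $\supp\gamma$; hence the (unique, by Corollary~\ref{coro_unique_g}) optimal second marginal $g$ is supported in $K_0:=\supp f+\overline{B}_{R'}$, which lies well inside $X$. For any compact set $Y$ with $K_0\subset Y$ one then has $\Ups(f)=\Ups_Y(f)$, where $\Ups_Y$ denotes the functional of Section~\ref{Xcompact} on $(Y,|\cdot|)$: a transport plan on $Y$ is a transport plan on $\R^d$, and conversely the optimal plan on $\R^d$ is supported in $Y\times Y$. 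The requirement $2m\le|Y|$ needed to run the compact theory holds because the explicit choice of $R=R(m)$ made in the proof of Proposition~\ref{prop_Ups_max} already satisfies $|B_R|\ge 2m$. I would take $Y=\supp f+\overline{B}_{\widehat R}$ with $\widehat R\ge R$ large, to be fixed in the last step.

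On such a $Y$, Lemma~\ref{lem_Phi} and Propositions~\ref{prop_ex_DPp},~\ref{prop_DP=OP} yield $\vhi_0\in\Phi'$ and $\psi_0:=(\vhi_0)^c_{\,-}$ (all $c$-transforms taken in $Y$) with $\vhi_0=(\psi_0)^{\bar c}$, $\max\vhi_0^c\ge0$, and $K_f(\vhi_0,\psi_0)=\Ups_Y(f)=\Ups(f)$. By Proposition~\ref{prop_Ups_OT} applied on $Y$ with second marginal $g$, $\psi_0=(\psi_0^{\bar c c})_-$ vanishes on $\{f+g<1\}$, hence on $Y\setminus K_0$ (a relative neighbourhood of $\partial Y$); therefore the extension $\psi$ of $\psi_0$ by $0$ on $\R^d\setminus Y$ is continuous, non-positive, and supported in $K_0\subset X$. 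I then set $\vhi:=\psi^{\bar c}$, the $\bar c$-transform over all of $\R^d$. Since $k\ge0$, $k(0)=0$ and $k$ is coercive by~(H\ref{cont}), one checks directly that $\vhi$ is continuous, that $\vhi\equiv0$ off $\supp\psi$ (take $y=x$ in the infimum), and that $\vhi$ is bounded; thus $(\vhi,\psi)\in\Phi$ with $\vhi\in C_c(\R^d)$ supported in $K_0\subset X$. Moreover $(\vhi^c_{\,-})^{\bar c}=\vhi$ by the idempotency computation in the proof of Lemma~\ref{lem_Phi} (carried out over $\R^d$, using Proposition~\ref{prop_ctransf}$(ii)$). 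Finally, since $\vhi^c=\psi^{\bar cc}\ge\psi$ we have $\vhi^c_{\,-}\ge\psi$; as $\psi\le0$ vanishes off $K_0$ and $\vhi^c_{\,-}\le0$, the function $\vhi^c_{\,-}$ also vanishes off $K_0$, so replacing $\psi$ by $\vhi^c_{\,-}$ keeps $(\vhi,\psi)\in\Phi$, keeps $\vhi=\psi^{\bar c}$, keeps both potentials in $C_c(\R^d)$ supported in $K_0\subset X$, and can only increase $K_f$ (because $1-f\ge0$). We may thus assume $\psi=\vhi^c_{\,-}$, exactly as in the statement.

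It then remains to prove $K_f(\vhi,\psi)=\Ups(f)$, the reverse inequality $K_f(\vhi,\psi)\le\Ups^*(f)\le\Ups(f)$ being weak duality on $\R^d$. Since $\supp\psi\subset X$ and $f$ vanishes off $X$, $\int(1-f)\psi=\int_Y(1-f)\psi_0$, so it suffices to show $\vhi\ge\vhi_0$ on $\supp f$. For $x\in\supp f$, the infimum defining $\vhi(x)=\psi^{\bar c}(x)$ splits into the contribution of $y\in Y$, equal to $(\psi_0)^{\bar c}(x)=\vhi_0(x)$, and the contribution of $y\notin Y$, which is $\ge\inf_{|z|\ge\widehat R}k(z)$ because $d(x,\R^d\setminus Y)\ge\widehat R$. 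On the other hand, choosing (for $\widehat R$ large enough) a point $w$ at distance $\le\diam\supp f+R'+1$ from $\supp f$ with $d(w,\supp f)>R'$, one has $\psi_0(w)=0$, whence $\vhi_0(x)\le k(w-x)\le\overline{k}\big(\diam\supp f+R'+1\big)$, with $\overline{k}(r):=\max_{|z|\le r}k(z)$. By coercivity of $k$ we may therefore fix $\widehat R$ so large that $\inf_{|z|\ge\widehat R}k(z)>\overline{k}\big(\diam\supp f+R'+1\big)$; then $\vhi=\vhi_0$ on $\supp f$, so $K_f(\vhi,\psi)=\Ups(f)$ and $(\vhi,\psi)$ is optimal.

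The step I expect to be the main obstacle is precisely the last one: the global $\bar c$- and $c$-transforms silently impose the extra constraints $\vhi(x)\le k(y-x)$ for arguments $y$ outside the working compact set, and one must ensure these do not push $\vhi$ below $\vhi_0$ on $\supp f$. This forces one to work on an enlarged ambient compact set $Y$ while keeping the supports of the final potentials inside $X$ — which is possible because the saturation property (Proposition~\ref{prop_Ups_OT}) makes the compact potential $\psi_0$ vanish near $\partial Y$, and because $\vhi_0$ admits an \emph{a priori} bound on $\supp f$ independent of the enlargement.
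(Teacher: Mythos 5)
Your proof is correct and its skeleton is the same as the paper's: solve the dual problem on a compact set via Propositions~\ref{prop_ex_DPp} and~\ref{prop_DP=OP}, use Proposition~\ref{prop_Ups_OT} to see that the potential $\psi$ vanishes where $f+g<1$ (hence near the boundary of the compact set), extend it by zero, and take a global transform to get $\vhi$. The genuine divergence is in the one step you correctly flag as the obstacle, namely controlling the global transform against competitors $y$ outside the working compact set. The paper stays on $X=\supp f+\overline B_R$ itself and invokes hypothesis~(H\ref{monot}): for $x\in X$ and $y\notin X$ it picks $\widetilde y\in[x,y]\cap\partial X$, where $\widetilde\psi(\widetilde y)=0$ and $c(x,\widetilde y)\le c(x,y)$ by radial monotonicity of $k$, so the exterior infimum is dominated by the interior one and $\vhi=\widetilde\psi^{\tilde c}$ on \emph{all} of $X$ (the same argument then gives $\vhi^c={\widetilde\psi}^{\tilde c\tilde c}$ on $X$, whence $\vhi^c_{\,-}=\psi$ directly). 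You instead enlarge the ambient set to $Y=\supp f+\overline B_{\widehat R}$ and use only coercivity~(H\ref{cont}), getting $\vhi=\vhi_0$ only on $\supp f$, which suffices for the value of $K_f$; you then recover the structural identities $\vhi=\psi^{\bar c}$, $\psi=\vhi^c_{\,-}$ by the separate idempotency argument $\bigl((\psi^{\bar c})^c_{\,-}\bigr)^{\bar c}=\psi^{\bar c}$ (valid since $\psi\le 0$). Your route buys independence from~(H\ref{monot}) at this stage, at the price of the enlargement bookkeeping and the \emph{a priori} bound on $\vhi_0$ over $\supp f$; the paper's route is shorter and yields the identification of $\vhi$ and $\vhi^c$ on the whole of $X$ at once. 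Two cosmetic points: the claim that one can take $R'<R$ strictly is not needed (and not quite automatic) — $R'=R$ works just as well since you enlarge to $Y$ anyway; and the continuity of the global transform $\psi^{\bar c}$ deserves the one-line justification that, by coercivity and boundedness of $\psi$, the infimum is locally attained in a fixed compact set, so Proposition~\ref{prop_ctransf}~$(i)$ applies on compacts.
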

	
	\begin{proof}
		Let us introduce $\tilde c := c_{|X \times X}$ which is a continuous cost function on the compact set $X$. By Proposition~\ref{prop_ex_DPp}, there exists $\widetilde \psi \in C(X)$ with $\widetilde \psi = ({\widetilde \psi}^{\tilde c \tilde c})_-$ such that $\Ups^*(f) = K_f({\widetilde \psi}^{\tilde c}, \widetilde \psi)$. By Proposition~\ref{prop_Ups_OT},
		\[
		\{{\widetilde \psi}^{\tilde c \tilde c} <  0 \} \subset \{f+g=1\}\subset X.
		\]
		
		\noindent
		Combining this with  $\widetilde \psi = ({\widetilde \psi}^{\tilde c \tilde c})_-$ and $f+g=0$ on $\partial X$ , we get $\widetilde \psi=0$ on $\partial X$.  We extend the potentials on $\R^d$ by setting
		
		\[
		\psi := \begin{cases}
			\widetilde \psi  &\text{in} \quad X,\\
			0  &\text{in} \quad X^c,
		\end{cases}
		\qquad \text{and for }x\in\R^d,\quad \vhi(x) := \psi^c(x) = \inf\{c(x,y) - \psi(y) : \, y \in \R^d \}.
		\]
		
		\noindent
		We now show that the pair $(\vhi, \psi)$ satisfies the conclusion of the lemma.
		
		Observe that $\psi$ is continuous and supported in $X$ and that $\psi \leq 0$. Hence $\vhi \geq 0$. Moreover, for $x \in\R^d\setminus\supp \psi$,
		\[
		\vhi(x) \leq c(x,x) - \psi(x) = 0,
		\]
		
		\noindent
		so that $\vhi$ is also  supported in $X$.\\
		Next, for $x \in X$,
		\[
		\vhi(x) = \min \lt( \min_{y \in \R^d\setminus X} c(x,y), \ \min_{y \in X} \{c(x,y) - \widetilde \psi(y) \} \rt).
		\]
		
		\noindent
		Let $x \in X$. For $y \in \R^d\setminus X$, there exists $\widetilde y$ in the intersection of the segment $[x,y]$ with $\partial X$. By continuity, $\widetilde \psi(\widetilde y) = 0= \widetilde\psi(y)$ and moreover by~(H\ref{monot}),  $c(x, \widetilde y) \leq c(x, y)$ so that $c(x, \widetilde y)- \widetilde \psi(\tilde y) \le c(x,y)$. We deduce that for $x \in X$ the above formula simplifies as
		\[
		\vhi(x)  = \min_{y \in X} \{c(x,y) - \widetilde \psi(y) \} = {\widetilde \psi}^{\tilde c}(x).
		\]
		
		\noindent
		This proves that $\vhi$ is continuous and that $K_f(\vhi, \psi) = K_f({\widetilde \psi}^{\tilde c}, \widetilde \psi) = \Ups^*(f)$. Moreover, using the same argument as above, we have $\vhi^c(y) = 0$ for $y \not \in X$. For $y \in X$,
		\[
		\vhi^c(y) = \min \lt( \min_{x \in \R^d\setminus X} c(x,y), \ \min_{x \in X} \{c(x,y) - {\widetilde \psi}^{\tilde c}(x) \} \rt) = \min_{x \in X} \{c(x,y) - {\widetilde \psi}^{\tilde c}(x) \} = {\widetilde \psi}^{\tilde c \tilde c}(y).
		\]
		
		\noindent
		We deduce $\vhi^c_{\,-} = 0 = \psi$ in $X^c$, and $\vhi^c_{\,-} = (\psi^{\tilde c \tilde c})_- = \widetilde \psi$ in $X$. Thus $\vhi^c_{\,-} = \psi$ everywhere. This ends the proof of the lemma.
	\end{proof}
	Let us now recall a variant of the bathtub principle, see~\cite[Theorem 1.14]{LiebLoss}.
	
	\begin{proposition}\label{prop_bathtub}
		Let $\xi : \R^d \to \R_+$ be measurable and such that  for all $t \ge 0$, $|\{ \xi > t\}| < \infty$. Given  $m >0$, let
		\[
		t := \inf \{ s \ge 0, \, |\{ \xi > s \}| \leq m\}.
		\]
		Then, the maximisers of
		\[
		\sup_{\widetilde{f}} \lt\{\int \widetilde{f} \xi \, : \widetilde{f} \in L^1(\R^d), \, 0 \leq \widetilde{f} \leq 1, \, \int \widetilde{f}\, =m\rt\}
		\]
		are the functions $f := \chi_{\{\xi > t \}} + \theta$, where $\theta \in L^1(\R^d, [0,1])$ is supported in $\{\xi = t\}$ and satisfies
		\[
		\int \theta\,= m-|\{\xi>t\}|.
		\]
	\end{proposition}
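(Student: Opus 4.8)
The plan is to reproduce the classical argument for the bathtub principle.

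\emph{Step 1.} First I would record the elementary properties of the distribution function $s \mapsto |\{\xi > s\}|$: it is non-increasing and, since every super-level set has finite measure, right-continuous by continuity from above of the Lebesgue measure. Since $\xi$ is real-valued, $\bigcap_{s\ge 0}\{\xi > s\} = \emptyset$, so $|\{\xi > s\}| \to 0$ as $s \to \infty$; hence $\{s \ge 0 : |\{\xi > s\}| \le m\}$ is non-empty, $t < \infty$, and by right-continuity $|\{\xi > t\}| \le m$. Letting $s \uparrow t$ along the decreasing family $\{\xi > s\}$ of finite-measure sets, with intersection $\{\xi \ge t\}$, yields $|\{\xi \ge t\}| \ge m$, so that $|\{\xi = t\}| \ge m - |\{\xi > t\}| \ge 0$. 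This is exactly what is needed for a $\theta \in L^1(\R^d,[0,1])$ supported in $\{\xi = t\}$ with $\int \theta\, = m - |\{\xi > t\}|$ to exist, and then $f = \chi_{\{\xi > t\}} + \theta$ is admissible. I would also note that $\int f\xi\, = \int_{\{\xi > t\}}\xi\, + t\,(m - |\{\xi > t\}|)$ is the same for every admissible $\theta$; if this number is $+\infty$ the supremum is $+\infty$ (testing with $\chi_{\{\xi > s\}}$ together with arbitrary filler mass in $\{\xi \le s\}$ and letting $s \downarrow t$) and there is nothing to prove, so I may assume it is finite.

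\emph{Step 2.} Next I would establish optimality of such an $f$ by a pointwise exchange estimate. For any admissible $\tilde f$: on $\{\xi > t\}$, $f = 1 \ge \tilde f$ so $(\tilde f - f)\,\xi \le (\tilde f - f)\,t$ (because $\tilde f - f \le 0$ and $\xi \ge t$); on $\{\xi < t\}$, $f = 0 \le \tilde f$ so $(\tilde f - f)\,\xi \le (\tilde f - f)\,t$ (because $\tilde f - f \ge 0$ and $0 \le \xi \le t$); on $\{\xi = t\}$ the identity is trivial. The function $(\tilde f-f)\xi$ has integrable positive part (dominated by $t\,\tilde f$), so its integral is well-defined in $[-\infty,\infty)$, and integrating, using $\int \tilde f\, = \int f\, = m$,
\[
\int \tilde f\,\xi\, - \int f\,\xi\, = \int (\tilde f - f)\,\xi\, \le t \int (\tilde f - f)\, = 0,
\]
so $f$ maximises the functional.

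\emph{Step 3.} Finally I would characterise the maximisers. If $\tilde f$ is a maximiser, the inequality above is an equality, hence $\int \bigl[(\tilde f - f)\,t - (\tilde f - f)\,\xi\bigr]\, = 0$ with a non-negative integrand, so $(\tilde f - f)(t - \xi) = 0$ almost everywhere. On $\{\xi > t\}$ this forces $\tilde f = f = 1$ a.e.\ and on $\{\xi < t\}$ it forces $\tilde f = f = 0$ a.e.; thus $\tilde f = \chi_{\{\xi > t\}} + \tilde\theta$ with $\tilde\theta := \tilde f\,\chi_{\{\xi = t\}} \in L^1(\R^d,[0,1])$ supported in $\{\xi = t\}$, and $\int \tilde f\, = m$ forces $\int \tilde\theta\, = m - |\{\xi > t\}|$. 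This is precisely the asserted form. I do not expect a genuine obstacle here: the only points requiring care are the measure-theoretic bookkeeping at the threshold $t$ in Step 1 (and, for full generality, the finiteness caveat), everything else being a direct pointwise comparison.
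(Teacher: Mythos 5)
Your proof is correct and complete. Note that the paper does not actually prove this proposition: it is stated as a variant of the bathtub principle with a citation to Lieb--Loss, Theorem~1.14, so there is no in-paper argument to compare against; your three steps (well-posedness of the threshold $t$, the pointwise exchange inequality $(\widetilde f - f)(\xi - t)\le 0$, and the equality analysis) reproduce the standard proof of that cited result. Two small remarks. First, in Step~1 the derivation of $|\{\xi\ge t\}|\ge m$ by letting $s\uparrow t$ (using that $|\{\xi>s\}|>m$ for $s<t$) presupposes $t>0$; when $t=0$ the conclusion is immediate since $\{\xi\ge 0\}=\R^d$, so nothing is lost, but the case split should be acknowledged. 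Second, your finiteness caveat is slightly understated: if $\int_{\{\xi>t\}}\xi\,=+\infty$, the supremum is $+\infty$ and the characterisation of the set of maximisers as stated genuinely fails (competitors putting their mass on a smaller super-level set with infinite integral, plus arbitrary filler, are also maximisers), rather than there being ``nothing to prove''; so $\int_{\{\xi>t\}}\xi\,<\infty$ must be read as an implicit hypothesis. This is harmless for the paper's applications, where $\xi$ is always bounded (a difference of compactly supported continuous potentials, or a $c$-transform on a compact set), so that every admissible $\widetilde f$ of mass $m$ satisfies $\int\widetilde f\xi\,\le m\|\xi\|_\infty<\infty$.
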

	
	We are now ready to establish Corollary~\ref{coro_maxE}.
	
	\smallskip
	
	\begin{proof}[Proof of Corollary~\ref{coro_maxE}]~\

		By Theorem~\ref{thm_MP_max}, the optimisation problem~\eqref{MP} admits a compactly supported solution $f$. Let $(\vhi, \psi) \in C_c(\R^d)\times C_c(\R^d)$ be an optimal pair for $\Ups^*(f)$ provided by Lemma~\ref{lem_ctransfglobal}, so that
		\[
		\Ups^*(f)= \int f(\vhi - \psi)\, + \int \psi\,. 
		\]
		We see that $f$ is a maximiser of:
		\[
		\sup\lt\{ \int \widetilde{f}(\vhi - \psi)\,: \widetilde{f} \in L^1(\R^d), \, 0 \leq \widetilde{f} \leq 1, \, \int \widetilde{f}\, = m\rt\}.
		\]
		Let us set $\xi := \vhi - \psi\ge 0$. By Proposition~\ref{prop_bathtub} there exists $t \ge 0$ and $\theta \in L^1(\R^d, [0,1])$ supported in $\{\xi =t \}$ such that $f  = \chi_{\{\xi> t \}} + \theta$.
		Notice in particular that since $\theta\in [0,1]$, we have
		\[
		|\{\xi=t\}|\ge \int \theta \, = m - |\{\xi> t \}|.
		\]
		and there exist measurable subsets $G\subset \{\xi = t\}$ with $|G|=m- |\{\xi> t \}|$. For any such set, setting
		\[
		\bar f := \chi_{\{\xi> t \}} + \chi_G,
		\]
		we have $\Ups^*(\bar f) = \Ups^*(f)$ and $\bar f$ is also a maximiser of~\eqref{MP}. Since $\bar f$ is a characteristic function, by Theorem~\ref{thm_saturation} and Corollary~\ref{coro_unique_g}, there exists $F \subset \R^d$ such that any minimiser $\gamma$ of $\Ups({\bar f})$ satisfies $\gamma_y = \chi_F$. Setting $E := \{\xi> t \} \cup G$, we deduce that
		\[
		\Ups_{\mathrm{set}}(E) =\Ups^*(\bar f) =\Ups(\bar f) = \Ups(f)
		\] 
		so that  $\E(m) = \E_{\mathrm{set}}(m)$, which concludes the proof.
	\end{proof}
	
	\section{Maximisers of~\eqref{MP} are characteristic functions of balls}\label{ballisunique}

	In this section we prove Theorem~\ref{thm_ball_unique_max}. We assume that $c(x,y)=k(|y-x|)$ with $k\in C(\R_+,\R_+)$ increasing and coercive and with $k(0)=0$. In particular, we have now $c = \bar c$, so that the operations of $c$-transform and $\bar c$-transform coincide. Also notice that by Theorem~\ref{thm_saturation}, if $f = \chi_E$ for some Lebesgue measurable set $E$ then $\Ups_{\mathrm{set}}(E) = \Ups(\chi_E)$. By abuse of notation, we write $\Ups(E)$ for $\Ups(\chi_E)$. Since the class of costs that we consider is invariant by scaling we assume without loss of generality that  $m = \om_d$.
	
	\medskip

	We now recall the definition of symmetric rearrangement of functions with constant sign (see~\cite[Chapter 3]{LiebLoss} for more details on symmetric rearrangements).
	
	\begin{definition}~\label{def_sym_rear}
		\begin{enumerate}[(i)]
			\item Given a measurable set $A \subset \R^d$, we define the symmetric rearrangement of $A$ as the open ball $A^*$ centred at the origin and of volume $|A|$. 
			
			\item Let $\vhi : \R^d \to \R_+$ be measurable and such that for every $t\geq 0$, $|\{\vhi > t\}| <\infty$. Its symmetric decreasing rearrangement is defined by
			\begin{equation*}
				\vhi^*(x) := \int_{\R_+} \chi_{\{ \vhi > t \}^*} (x) \, dt.
			\end{equation*}	
			
			\item Let $\psi : \R^d \to \R_-$ be measurable and such that for every $t\leq 0$, $|\{\psi < t\}| <  \infty$. Its symmetric increasing rearrangement is defined by
			\begin{equation*}
				\psi_*(x) :=-(-\psi)^*(x)= - \int_{\R_-} \chi_{\{ \psi < t \}^*} (x) \, dt.
			\end{equation*}	
		\end{enumerate}	
	\end{definition}
	
	The following lemma recalls some basic properties of the symmetric increasing rearrangement $\psi_*$ of a non-positive function $\psi$. All these properties  but the continuity of $\psi_*$ follow immediately from the definition. The fact that continuity is preserved by symmetric rearrangement is well-known but we have no reference for this at hand. We provide a short proof for the reader's convenience.
	
	\begin{lemma}\label{lem_psi_star}
		Let $\psi : \R^d \to \R_-$ be as in Definition~\ref{def_sym_rear}. Then, $\psi_*$ is non-positive, radial, non-decreasing, and for any $t \leq 0$, $\{\psi_*<t\}=\{\psi<t\}^*$. Besides, if $\psi$ is supported in a compact set of diameter bounded by $2R >0$ then $\psi_*$ is  supported in $\overline B_R$. If moreover $\psi$ is continuous then $\psi_*$ is also continuous.
	\end{lemma}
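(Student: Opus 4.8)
Every assertion except continuity becomes routine once we rewrite things in terms of the symmetric \emph{decreasing} rearrangement $u^*$ of the non-negative function $u := -\psi$, via $\psi_* = -u^*$. Directly from Definition~\ref{def_sym_rear}, $u^*$ is non-negative, radial and non-increasing, and expanding $u^*(x) = \int_0^\infty \chi_{\{u>t\}^*}(x)\,dt$ and using that $t \mapsto |\{u>t\}|$ is non-increasing (together with continuity from below of Lebesgue measure) gives $\{u^*>s\} = \{u>s\}^*$ for every $s \geq 0$. Negating, $\psi_*$ is non-positive, radial, non-decreasing, and $\{\psi_* < t\} = \{\psi < t\}^*$ for all $t \leq 0$. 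For the support claim, if $\psi$ is supported in a set $K$ of diameter $\le 2R$, then $|\{\psi < 0\}| \le |K| \le \om_d R^d$ by the isodiametric inequality, and since $\{\psi_* < 0\} = \bigcup_{t<0}\{\psi<t\}^*$ is an increasing union of balls centred at $0$ with radii $\le R$, it is contained in $B_R$; hence $\supp\psi_* \subset \overline B_R$.

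The only delicate point is continuity, which I would prove under the standing compact support assumption by showing that $u^*$ is continuous. By radial symmetry $u^*(x) = v(|x|)$ for a non-increasing profile $v : [0,\infty) \to \R_+$, and $u^*$ is continuous on $\R^d$ exactly when $v$ is continuous on $[0,\infty)$. A monotone function has only jump discontinuities, so it suffices to exclude a downward jump of $v$. The idea is that such a jump would force the range of $u$ to miss an interval, which is impossible for a continuous function on the connected space $\R^d$.

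Concretely, suppose $v$ jumps at some $\rho_0 > 0$ with left limit $a$ and right limit $b < a$ (a jump at $\rho_0 = 0$ is handled the same way, using $v(0) = u^*(0) = \max u$). For every $s \in (b,a)$ the superlevel set $\{u^* > s\}$ is a ball of volume $\om_d\rho_0^d$, so by equimeasurability $s \mapsto |\{u>s\}|$ is constant on $(b,a)$, whence $|\{b<u<a\}| = 0$. Since $u$ is continuous, $\{b<u<a\}$ is open, hence empty: $u$ takes no value in $(b,a)$. But $u \ge 0$ vanishes off its compact support, so $u$ attains the value $0 \le b$; and $a \le v(0) = u^*(0) = \max_{\R^d}u$, so $u$ attains a value $\ge a$. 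By the intermediate value theorem $u$ then attains every value in $[0,\max u] \supset (b,a)$ — a contradiction. Hence $v$, and so $\psi_* = -u^*$, is continuous.

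The main obstacle is precisely this continuity argument; the rest is bookkeeping with the definition. Within it, the crux is the chain ``jump of the rearrangement $\Leftrightarrow$ plateau of the distribution function $s\mapsto|\{u>s\}|$ $\Leftrightarrow$ gap in the range of $u$'', after which continuity of $u$ together with connectedness of $\R^d$ and compactness of $\supp u$ (needed to produce a point where $u$ vanishes) closes the argument.
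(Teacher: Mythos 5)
Your proof is correct and follows essentially the same route as the paper's: the paper likewise treats everything except continuity as immediate from the definition, and its continuity argument (upper semicontinuity because the sublevel sets $\{\psi_*<t\}$ are open balls, plus lower semicontinuity via strictly decreasing radii of the balls $\{\psi_*<t_n\}$) hinges on exactly the step you isolate, namely that a plateau of $t\mapsto|\{\psi<t\}|$ is impossible since continuity of $\psi$, the intermediate value theorem and the compact support (providing a point where $\psi=0$) would otherwise yield a non-empty open set of the form $\{t'<\psi<t\}$ with measure zero. Your reformulation in terms of jumps of the radial profile of $(-\psi)^*$ and a gap in the range of $-\psi$ is a repackaging of that same mechanism, and your handling of the support claim via the isodiametric inequality is the intended one.
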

	
	\begin{proof}[Proof of the last point]
		Let $\psi\in C_c(\R^d,\R_-)$. First, as the strict sublevels sets $\{\psi_* < t\}$ are the open balls $\{\psi < t\}^*$, $\psi_*$ is upper semi-continuous (note that this is true even when $\psi$ is not continuous).
		
		Let us now establish that $\psi_*$ is lower semi-continuous, \ie that for any $t \leq 0$, $\{ \psi_* \leq t \}$ is closed. We first notice that $\{\psi_* \leq 0 \} = \R^d$ is closed. Given $t < 0$, let $t_n<0$ be a decreasing sequence converging to $t$. Observe that if for some $n \geq 0$,  $\{\psi < t_n\} = \emptyset $, then $\{\psi_* \le t\} = \emptyset$ is closed. Next, we assume that for every $n \geq 0$,
		\begin{equation}\label{sublvlpsi}
			\{\psi < t_n \} \neq \emptyset.
		\end{equation}
		
		\noindent
		We denote by $R_n$ the radius of the ball $\{\psi_* < t_n\}$. Notice that the sequence $R_n$ is non-increasing and bounded by $0$, so that $R_n$ converges to some $R \geq 0$. 
		
		Let us show that the sequence $R_n$ is decreasing. By contradiction, we assume that $R_n = R_{n+1}$ for some $n \geq 0$. Then $\{\psi < t_n\}^* = \{\psi < t_{n+1}\}^*$ and $|\{t_{n+1} \leq \psi < t_n\}| = 0$. Using~\eqref{sublvlpsi} and the fact that $\psi$ is compactly supported, there exists $x$ such that $\psi(x)<t_{n+1}$ and $y$ such that $\psi(y)>t_n$. 
		Thus by continuity of $\psi$ there exists $z$ such that $\psi(z) = (t_{n+1} + t_n)/2$. By continuity of $\psi$ again, there exists $\eta >0$ such that $B_\eta(z) \subset \{t_{n+1} < \psi < t_n\}$, contradicting the fact that $|\{t_{n+1} \leq \psi < t_n\}| = 0$. As a conclusion, the sequence $R_n$ is decreasing and
		\[
		\{\psi_* \leq t \} = \bigcap_{n \geq 0} \{\psi_* < t_n\} = \bigcap_{n \geq 0} B_{R_n} = \overline{B}_{R}.
		\]
		
		\noindent
		Hence $\psi_*$ is lower semi-continuous and therefore  continuous.
	\end{proof}
	
	To prove Theorem \ref{thm_ball_unique_max}, we need a last lemma  characterising optimal potentials of $\Ups^*(\chi_{B_1})$.  Along the way we will prove that the set $F$ minimizing $\Ups_{\mathrm{set}}(E)$ (recall \eqref{Upsfunc}) is the annulus $A=B_{2^{1/d}}\backslash B_1$.
	
	\begin{lemma}\label{lem_monotpotball}
		Let $(\psi^c, \psi)$ be a pair of optimal potentials for $\Ups^*(\chi_{B_1})$ such that $\psi$ is radially symmetric and non-decreasing. Then $\psi^c$ is radially symmetric and non-increasing. Besides, $\psi^c$ is radially decreasing on $B_{1}$. Finally, if $\gamma$ is a minimizer of $\Ups(\chi_{B_1})$ then $\gamma_y=\chi_{A}$.
	\end{lemma}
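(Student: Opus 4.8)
The plan is to prove the four assertions in order, the delicate point being the strict radial decrease of $\psi^c$ on $B_1$. Throughout I use $\psi=(\psi^{cc})_-$ and the fact (Lemma~\ref{lem_ctransfglobal}) that the potentials are compactly supported, so the infima below are attained. Radial symmetry of $\psi^c$ is immediate: $c(x,y)=k(|y-x|)$ is invariant under the diagonal $O(d)$-action and $\psi$ is radial, so $\psi^c(Ry)=\psi^c(y)$ for every rotation $R$. For monotonicity I use a two-step coupling on the infimum defining $\psi^c(te_1)$, $t\ge0$: (a) reflecting a competitor $x=(a,v)$ to $(|a|,v)$ leaves $|x|$, hence $\psi(x)$, unchanged and does not increase $|te_1-x|$, so one may restrict to $\{x\cdot e_1\ge0\}$; (b) for $0\le t_1\le t_2$ and $a=x\cdot e_1\ge0$ the translate $x':=x+(t_2-t_1)e_1$ satisfies $|t_2e_1-x'|=|t_1e_1-x|$ and $|x'|\ge|x|$, so $k(|t_2e_1-x'|)-\psi(x')\le k(|t_1e_1-x|)-\psi(x)$; taking infima gives $\psi^c(t_2e_1)\le\psi^c(t_1e_1)$. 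The same method applied to $\psi^c$ (radial, non-increasing), now using the competitor $x=0$ for small radii where $\psi^c$ is maximal, shows $\psi^{cc}=(\psi^c)^c$ is radial and non-decreasing.

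Next I identify $g:=\gamma_y$. As $\chi_{B_1}$ is a characteristic function, Theorem~\ref{thm_saturation} together with $\gamma_y\le1-\chi_{B_1}$ forces $g=\chi_\Sigma$ with $\Sigma\subset\R^d\setminus B_1$, and $\int g=\om_d$ gives $|\Sigma|=\om_d$; by Corollary~\ref{coro_unique_g} $\Sigma$ does not depend on $\gamma$, hence is radial. By Proposition~\ref{prop_Ups_OT}, $\psi^{cc}\le0$ a.e.\ on $\Sigma$ and $\psi^{cc}\ge0$ a.e.\ on $\R^d\setminus(B_1\cup\Sigma)$; since $\psi^{cc}$ is continuous, radial and non-decreasing, $\{\psi^{cc}<0\}=B_{\rho_-}$ and $\{\psi^{cc}>0\}=\R^d\setminus\overline B_{\rho_+}$ with $1\le\rho_-\le\rho_+$, whence $B_{\rho_-}\setminus B_1\subset\Sigma\subset B_{\rho_+}\setminus B_1$ and $\rho_-\le2^{1/d}\le\rho_+$. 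To upgrade this to $\Sigma=A:=B_{2^{1/d}}\setminus B_1$ I argue by contradiction: if $\Sigma$ missed a positive-measure part of $A$ then, being radial of volume $\om_d$, it would carry a positive amount of $g$ at some radii $>2^{1/d}$ while leaving room ($f+g<1$) near some radius $a\in(1,2^{1/d})$; taking $(x_0,y_0)\in\supp\gamma$ with $|y_0|>2^{1/d}$ — so $x_0\in\overline B_1$ — and rerouting a small amount of mass from a neighbourhood of $(x_0,y_0)$ to the free region near the point of radius $a$ in the direction of $x_0$ strictly decreases the cost, since that point lies at distance $a-|x_0|<|y_0|-|x_0|\le|x_0-y_0|$ from $x_0$ and $k$ is increasing — contradicting minimality of $\gamma$. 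Hence $g=\chi_A$, proving the last assertion; moreover $f+g=\chi_{B_{2^{1/d}}}$, so $\psi^{cc}\le0$ on $\overline A$ and thus (monotonicity) on $B_{2^{1/d}}$, giving $\psi=\psi^{cc}$ there and $\{\psi<0\}=B_{r_0}$ with $1\le r_0\le2^{1/d}$.

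Finally, the strict decrease of $\psi^c$ on $B_1$. First $\psi\not\equiv0$, for otherwise $\psi^c\equiv0$ and $\Ups^*(\chi_{B_1})=0$ while $\Ups(\chi_{B_1})=\mathcal{T}_c(\chi_{B_1},\chi_A)>0$; with $\psi^{cc}=\psi\le0$ on $B_{2^{1/d}}$ non-decreasing this forces $\psi^{cc}(0)<0$, i.e.\ $r_0\ge1$, and testing the infimum with a point of $\partial B_{r_0}$ gives $\psi^c(x)\le k(r_0-|x|)$ for $|x|<r_0$. Take $0\le r_1<r_2\le1$ and suppose $\psi^c(r_1e_1)=\psi^c(r_2e_1)=:v$; let $y_1=\rho_1e_1$ (after a reflection, $\rho_1\ge0$) realise $\psi^c(r_1e_1)=k(|r_1e_1-y_1|)-\psi(y_1)$. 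If $\psi(y_1)=0$, then $v=k(|r_1e_1-y_1|)\ge k(r_0-r_1)>k(r_0-r_2)\ge\psi^c(r_2e_1)=v$, absurd; so $\psi(y_1)<0$, and since a competitor of radius $<r_1$ never beats $y=r_1e_1$ one has $\rho_1\ge r_1$. If $\rho_1>r_1$, translating by $(r_2-r_1)e_1$ gives a competitor $y_2$ for $\psi^c(r_2e_1)$ with $|r_2e_1-y_2|=|r_1e_1-y_1|$ and $|y_2|>|y_1|$, so $\psi^c(r_2e_1)\le v$ with equality only if $\psi$ is constant on $[|y_1|,|y_2|]$; but testing the infimum for $\psi^c(r_1e_1)$ against $(\rho_1-\eps)e_1$ shows $\psi$ is strictly increasing just below $\rho_1$, and the analogous estimate at $y_2$ contradicts that constancy. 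There remains the degenerate case $\rho_1=r_1$, i.e.\ $\psi^c(r_1e_1)=-\psi(r_1e_1)$, which forces $\psi^c\equiv-\psi$ and constant on the annulus $\{r_1\le|x|\le r_2\}\subset B_1$; to exclude it I would use that every such $x$ is moved (as $g=0$ on $B_1$) to some $y(x)\in\overline A$ with $\psi^c(x)+\psi^{cc}(y(x))=c(x,y(x))$ and $\psi^{cc}(y(x))\le0$, combined with cyclical monotonicity of $\supp\gamma$ and the now-explicit $g=\chi_A$, which is incompatible with $\psi^c$ being constant and maximal on a sub-annulus of $B_1$. This yields $\psi^c(r_1e_1)>\psi^c(r_2e_1)$, completing the proof. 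The main obstacle is precisely this last degenerate case: it requires a careful joint use of the $c$-transform identities, the sign data on $\psi^{cc}$ from Proposition~\ref{prop_Ups_OT}, and the explicit geometry of $\gamma$; the exchange argument identifying $\Sigma=A$ is the other point requiring care.
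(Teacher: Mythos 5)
Your treatment of the radial symmetry and global monotonicity of $\psi^c$ is correct (the reflection-plus-translation coupling is a legitimate variant of the paper's reduction of the infimum to the ray $\{\lambda x,\ \lambda\ge1\}$), and your identification of $\gamma_y=\chi_A$ --- uniqueness of the second marginal (Corollary~\ref{coro_unique_g}) plus rotation invariance to get radiality of $\Sigma$, then the saturation Lemma~\ref{lem_saturation} to rule out mass beyond $B_{2^{1/d}}$ --- is a valid and in fact more detailed route than the paper's one-line conclusion. The genuine gap is in the strict decrease of $\psi^c$ on $B_1$: your case distinction leaves the ``degenerate case'' $\rho_1=r_1$ unproved; you only gesture at ``cyclical monotonicity \dots incompatible with $\psi^c$ being constant and maximal on a sub-annulus'' without carrying out any argument. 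This case cannot be waved away, because it is not vacuous: for $d=1$ and $k(r)=r$ the optimal potentials are $\psi(y)=(|y|-2)\wedge 0$ and $\psi^c(x)=(2-|x|)_+$, so $\psi^c=-\psi$ on all of $B_1$ and the infimum defining $\psi^c(x)$ is attained at $y=x$ for every $x\in B_1$; your non-degenerate mechanism (strict increase of $\psi$ just below $\rho_1>r_1$) is simply unavailable there. In addition, the auxiliary fact $r_0\ge1$ (i.e.\ $\psi<0$ on all of $B_1$), which you use to dispose of the sub-case $\psi(y_1)=0$, is asserted but does not follow from $\psi^{cc}(0)<0$ alone.

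The missing step is precisely where the transport plan must enter, and it is the paper's key move: take $(x_0,y_0)\in\supp\gamma$ with $|x_0|=r_1$; since $\gamma_y=\chi_A$ one has $|y_0|\ge1>r_2$, and since $(\psi^c,\psi^{cc})$ is a pair of Kantorovitch potentials for the transport from $\chi_{B_1}$ to $\gamma_y$ (Proposition~\ref{prop_Ups_OT}), there holds $\psi^c(x_0)+\psi^{cc}(y_0)=k(|x_0-y_0|)$, while for the point $x_2$ where the segment $[x_0,y_0]$ crosses $\partial B_{r_2}$ (or, after the reflection step showing $y_0\in[1,\infty)\,x_0/|x_0|$, the point $r_2\,y_0/|y_0|$) one has $\psi^c(x_2)+\psi^{cc}(y_0)\le k(|x_2-y_0|)<k(|x_0-y_0|)$ because $k$ is increasing. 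Subtracting and using the radial symmetry of $\psi^c$ gives $\psi^c(r_2)<\psi^c(r_1)$ in one stroke, with no case distinction on where the $c$-transform infimum is attained. You should replace your Step~3 (both branches) by this argument, or at the very least complete the degenerate branch with it.
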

	
	\begin{proof}
		Combining the facts that  $k$ is continuous, that $k(r) \to \infty$ as $r \to \infty$ and that $\psi$ is bounded by Lemma \ref{lem_ctransfglobal}, we see that for any $x \in \R^d$,
		\[
		\psi^c(x) = \min \{ k(|y-x|) - \psi(y) : y \in \R^d \}.
		\]
		As $\psi$ is radially symmetric non-decreasing and $k$ is increasing, we easily see that
		\begin{equation}\label{altinfdef}
			\psi^c(x) = \min \{k(|y-x|) - \psi(y) : y, \, \exists \lambda \geq 1, \, y = \lambda x\},
		\end{equation}
		which in turn implies that $\psi^c$ is radially symmetric.
		
		\medskip
		
		From now on, for radial functions $\zeta:\R^d\to\R$, we make the abuse of notation $\zeta(r)=\zeta(r\sigma)$ for $r\ge0$ where $\sigma$ is some fixed element of $\mathbb{S}^{d-1}$. With this convention~\eqref{altinfdef} reads
		\begin{equation}\label{altinfdef_bis}
			\psi^c(r) = \min_{s\ge r} k(s-r) - \psi(s).
		\end{equation}
		Let us prove that $\psi^c$ is non-increasing. Let $0 \le r_1 \le r_2$. By~\eqref{altinfdef_bis}, there exists $r\ge r_1$ such that
		\begin{equation}\label{psicx1}
			\psi^c(r_1) = k(r-r_1) - \psi(r).
		\end{equation}
		If $r \leq r_2$, we use $\psi^c(r_2)\le k(0) -\psi(r_2)=-\psi(r_2)$ and deduce from~\eqref{psicx1} and the fact that $\psi$ is non-decreasing that
		\begin{equation*}
			\psi^c(r_2) - \psi^c(r_1) \leq  \psi(r) - \psi(r_2) -k(r-r_1) \leq 0.
		\end{equation*}
		If $r > r_2$, we use $\psi^c(r_2)\le k(r-r_2)-\psi(r)$ to get
		\begin{equation*}
			\psi^c(r_2) - \psi^c(r_1) \leq  k(r-r_2) - k(r-r_1)  \leq 0,
		\end{equation*}
		because $r_1\le r_2<r$ and $k$ is increasing. In both cases $\psi^c(r_2) - \psi^c(r_1) \leq 0$. Hence $\psi^c$ is non-increasing on $\R^d$.
		
		\medskip
		
		We now prove that $\psi^c$ is decreasing on $B_{1}$. Let $0 < r_1 < r_2 < 1$. Given $\gamma$ a minimiser for $\Ups(B_{1})$, there exists $y\in \R^d\setminus B_1$ such that $(r_1 \sigma, y) \in \supp \gamma$. By Proposition~\ref{prop_Ups_OT}, $\gamma$ is an optimal transport plan between $f:=\chi_{B_1}$ and $g := \gamma_y$ and $(\psi^c, \psi^{cc})$ is a pair of Kantorovitch potentials for the transport between $f$ and $g$. Therefore,
		\begin{equation}\label{y1min}
			\psi^c(r_1) + \psi^{cc}(|y|) = k(|y-r_1 \sigma|).
		\end{equation}
		Let us prove by contradiction that $y\in [1,+\infty)\sigma$. Assume it is not and let $y' := |y|\sigma$. Recalling that $|y|\ge1>r_1$, we have $|y'-r_1 \sigma| = |y|-r_1 < |y - r_1 \sigma|$ and since $k$ is increasing we deduce
		\[ 
		k(|y'-r_1 \sigma|) < k(|y - r_1 \sigma|).
		\]
		Then, by definition of $\psi^{cc}$ and taking into account that it is radially symmetric we get
		\[
		\psi^c(r_1) + \psi^{cc}(|y|) \leq k(|y'-r_1 \sigma|)< k(|y - r_1 \sigma|)
		\]
		which contradicts~\eqref{y1min}. Therefore, $y = r \sigma$ for some $r \geq 1$. By definition of the $c$-transform,
		\begin{equation}\label{defKP}
			\psi^c(r_2) + \psi^{cc}(r) \leq k(r-r_2).
		\end{equation}
		Subtracting~\eqref{y1min} to \eqref{defKP}, we obtain
		\begin{equation*}
			\psi^c(r_2) - \psi^c(r_1) \leq k(r-r_2) - k(r-r_1) < 0,
		\end{equation*}
		where we used $r_1 < r_2 <1\le r$. This shows that $\psi^c$ is decreasing on $B_{1}$.\\
		Finally we notice that as a consequence of the above discussion, the plan $\gamma$ is radial. Combining this with Lemma \ref{lem_saturation} proves that $g=\chi_{A}$.
	\end{proof}
	We are now ready to prove Theorem~\ref{thm_ball_unique_max}.
	\smallskip

	\begin{proof}[Proof of Theorem~\ref{thm_ball_unique_max}]~\
		\begin{center}
			\textit{Part I : Unit balls are maximisers of $\E(\om_d)$.}
		\end{center}
		
		By Theorem~\ref{thm_MP_max}, there exists a compactly supported maximiser $f$ for~\eqref{MP} with $m = \om_d$. By Lemma~\ref{lem_ctransfglobal}, there exists an optimal pair $(\psi^c, \psi) \in C_c(\R^d) \times C_c(\R^d)$ for problem $\Ups^*(f)$ such that $\psi = (\psi^{cc})_-$.
		
		\medskip
		
		\noindent
		\textit{Step 1. We build a radially symmetric maximiser for~\eqref{MP}.}
		
		Let $\psi_*$ be the symmetric increasing rearrangement of $\psi$. By Lemma~\ref{lem_psi_star}, as $\psi \in C_c(\R^d)$, we also have $\psi_* \in C_c(\R^d)$. We denote by $\psi_*^{\,c}$ the function $(\psi_*)^c$. By definition, $\psi_*^{\,c} \oplus \psi_* \leq c$. Proceeding as in the proof of Lemma~\ref{lem_ctransfglobal}, we obtain $\psi_*^{\,c} \in C_c(\R^d)$. Thus $(\psi_*^{\,c}, \psi_*)$ is admissible for $\Ups^*(B_{1})$.
		
		Notice that $(f, \psi)$ solves the double supremum problem (recall the definition~\eqref{Kf} of $K_f$)
		\[
		\sup_f \sup_{\psi \in C_c(\R^d)} \lt\{K_f(\psi^c, \psi) : 0\leq  f \leq 1, \, \int f \,= \om_d, \, \psi \leq 0 \rt\}.
		\]
		
		\noindent
		Hence
		\[
		\E(\om_d) = K_f(\psi^c, \psi) =  \int f (\psi^c - \psi) \,+\int \psi\, \geq \Ups^*(B_{1}) \geq K_{\chi_{B_{1}}}(\psi_*^{\,c}, \psi_*)  = \int_{B_{1}} (\psi_*^{\,c}-\psi_*)\, + \int \psi_*\,.
		\]
		\noindent
		In the remainder of this step, we establish the converse inequality 
		\begin{equation}\label{converseineq}
			K_f(\psi^c, \psi) \leq K_{\chi_{B_{1}}}(\psi_*^{\,c}, \psi_*),
		\end{equation}
		
		\noindent 
		so that  $B_1$ is a  maximiser of $\E(\om_d)$ and the first part of Theorem~\ref{thm_ball_unique_max} is proved.  Notice that~\eqref{converseineq} also implies that $(\psi_*^{\,c}, \psi_*)$ is a pair of optimal potentials for $\Ups^*(B_{1})$. To establish~\eqref{converseineq}, we first notice that by construction $\smallint \psi = \smallint \psi^*$ so that we only need to prove
		\begin{equation}\label{toprovereduced}
			\int f (\psi^c - \psi)\, \leq \int_{B_{1}} (\psi_*^{\,c}-\psi_*)\,.
		\end{equation}
		In Step 2 below we establish the inequality 
		\begin{equation}\label{psistarccstar}
			(\psi^c)^* \leq (\psi_*)^c = \psi_*^{\,c},
		\end{equation}
		
		\noindent
		where $(\psi^c)^*$ denotes the symmetric decreasing rearrangement of $\psi^c$. Admitting that~\eqref{psistarccstar} holds we deduce~\eqref{toprovereduced} as follows. Since $f$ is non-negative and compactly supported we have by the Hardy-Littlewood inequality (see~\cite[Theorem 3.4]{LiebLoss})
		\begin{equation}\label{ineqHL}
			-\int f \psi \,\leq - \int f^* \psi_* \,\qquad \text{and} \qquad \int f \psi^c\, \leq \int f^* (\psi^c)^*\, \st{\eqref{psistarccstar}}{\leq} \int f^* \psi_*^{\,c}\,.
		\end{equation}
		
		\noindent
		Using that  $-\psi_*$ and $\psi_*^{\,c}$ are radially symmetric and non-increasing, we may appeal to   Proposition~\ref{prop_bathtub} and conclude that separately,
		\begin{equation}\label{lefttoprove}
			- \int f \psi \,\leq - \int \chi_{B_1} \psi_*\,  \qquad \textrm{and } \qquad \int f \psi^c\, \leq \int  \chi_{B_1}\psi_*^{\,c}\,.
		\end{equation}
		
		\noindent
		Summing these inequalities gives~\eqref{toprovereduced} and thus~\eqref{converseineq}. This proves that $\chi_{B_1}$ is a maximiser for $\E(\om_d)$ and then that $B_1$ is a maximiser for $\E_{\textrm{set}}(\om_d)$.
		\medskip
		
		\noindent
		\textit{Step 2. Proof of~\eqref{psistarccstar}.}
		
		As $\psi_*^{\,c}$ and $(\psi^c)^*$ are both continuous radially symmetric functions, to prove~\eqref{psistarccstar} it is sufficient to establish that for any $t> 0$, $\{(\psi^c)^* >t \} \subset \{\psi_*^{\,c} >t \}$, \ie that
		\begin{equation}\label{rearr}
			|\{(\psi^c)^* > t\}| =  |\{\psi^c > t\}| \leq |\{\psi_*^{\,c} > t\}| .
		\end{equation}
		
		\noindent
		Recall that as $\psi \in C_c(\R^d)$ and $k \in C(\R_+, \R_+)$ with $k(x) \to \infty$ as $x \to \infty$, for any $x \in \R^d$ the function $k(|y-x|) - \psi(y)$ admits a minimum on $\R^d$. Thus for any $x \in \R^d$ the infimum defining $\psi^c(x)$ (see Definition~\ref{def_ctransf}) is reached. Recalling that $k$ is also radially symmetric and increasing, we obtain
		\begin{align*}
			\{\psi^c > t\}&=\{x \in \R^d :  \min \{k(|y-x|)-\psi(y) : y \in \R^d\} > t \}\\
			&=\{x\in \R^d :-\psi > t-k(r) \text{ on } \overline{B}_r(x) \ \forall r\geq 0\}\\
			&=\bigcap_{r\geq 0} \{x\in \R^d :-\psi > t-k(r) \text{ on } \overline{B}_r(x)\}\\
			&=\bigcap_{r\geq 0} \{-\psi > t-k(r)\}_r,
		\end{align*}
		
		\noindent
		where for $\Om\subset \R^d$ and $r\geq 0$, $\Om_r$ is defined as $\Om_r:=\{x\in \Om:d(x, \R^d\setminus\Om) > r\}$. In particular,
		\begin{equation}\label{infboundpsic}
			|\{\psi^c> t\} |\le \inf_{r\geq 0}  |\{-\psi > t-k(r)\}_r|.
		\end{equation}
		
		\noindent
		We observe that $\{- \psi > t - k(r)\}$ is an open set for any $t>0$ and $r\geq 0$. We also notice that~\eqref{infboundpsic} holds for all $\psi \in C_c(\R^d)$. In particular, it holds for $\psi_*$. Moreover, as $\psi_*$ is radially non-decreasing by construction, the sets $\{-\psi_*> t-k(r)\}_r$ are open balls centred at the origin and we have in fact
		\begin{equation*}
			|\{\psi_*^{\,c} > t\} |= \inf_{r\geq 0}  |\{-\psi_* > t-k(r)\}_r|.
		\end{equation*}
		
		\smallskip
		
		\noindent
		Let us now prove the following claim.
		\begin{claim*}
			Let $s>0$ and $V> 0$. 
			\begin{enumerate}[(i)]
				\item If $V>\om_d s^d$ then, among open sets $\Om \subset \R^d$ of volume $V$,  $|\Om_s|$ is maximal if and only if $\Om$ is a ball. 
				\item If $V\le\om_d s^d$ then $|\Om_s|=0$ for any set of volume $V$.
			\end{enumerate}
		\end{claim*}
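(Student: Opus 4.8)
The proof will rest on the geometric description of the inner parallel set: for an open $\Om$ with $|\Om|<\infty$ (hence $\Om\neq\R^d$) one has $\Om_s=\{x\in\R^d:\overline B_s(x)\subset\Om\}$. The inclusion ``$\subset$'' is clear, since $|z-x|\le s<d(x,\R^d\setminus\Om)$ forces $z\in\Om$. For ``$\supset$'' one uses that $\R^d\setminus\Om$ is closed: if $\overline B_s(x)\subset\Om$ and $y_n\in\R^d\setminus\Om$ satisfy $|x-y_n|\to d(x,\R^d\setminus\Om)$, a limit point $y_\infty$ lies in $\R^d\setminus\Om$ and, were $|x-y_\infty|\le s$, would lie in $\overline B_s(x)\subset\Om$; hence $d(x,\R^d\setminus\Om)>s$. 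Since $x\mapsto d(x,\R^d\setminus\Om)$ is $1$-Lipschitz, $\Om_s$ is open, and the description yields at once
\[
\Om_s+\overline B_s=\bigcup_{x\in\Om_s}\overline B_s(x)\subset\Om .
\]
For (ii), if some $x$ belonged to $\Om_s$ (here $\Om$ need not be open: still $|x-z|<s':=d(x,\R^d\setminus\Om)$ forces $z\in\Om$), then picking $s''\in(s,s')$ gives $\overline B_{s''}(x)\subset\Om$ and $V=|\Om|\ge\om_d (s'')^d>\om_d s^d\ge V$, a contradiction; so $\Om_s=\emptyset$ and $|\Om_s|=0$.

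For the inequality in (i), write $R:=(V/\om_d)^{1/d}>s$. If $\Om_s=\emptyset$ there is nothing to prove; otherwise $\Om_s$ is a nonempty open set of finite measure and the Brunn--Minkowski inequality, applied to the inclusion above, gives
\[
V^{1/d}\ge|\Om_s+\overline B_s|^{1/d}\ge|\Om_s|^{1/d}+|\overline B_s|^{1/d}=|\Om_s|^{1/d}+\om_d^{1/d}s ,
\]
so $|\Om_s|\le\om_d(R-s)^d$. Taking $\Om=B_R(c)$ one computes $d(x,\R^d\setminus\Om)=R-|x-c|$, hence $\Om_s=B_{R-s}(c)$, so the bound is attained with value $\om_d(R-s)^d$; this proves the ``if'' half of the equivalence and identifies the maximal value.

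For the ``only if'' half, suppose $|\Om_s|=\om_d(R-s)^d$. Then every inequality above is an equality; in particular equality holds in Brunn--Minkowski with one summand equal to the ball $\overline B_s$. By the equality case of Brunn--Minkowski, $\Om_s$ must then agree, up to a Lebesgue-null set and a translation, with the ball $B_{R-s}(c)$ (its radius being forced by its measure). Since $\Om_s$ is open, this implies $\Om_s\subset\overline B_{R-s}(c)$ and that $\Om_s\cap B_{R-s}(c)$ is a dense open subset of $B_{R-s}(c)$. A short density argument then upgrades the inclusion to $\Om_s+\overline B_s=B_R(c)$: given $y\in B_R(c)$, the nonempty open set $B_s(y)\cap B_{R-s}(c)$ meets $\Om_s$ at some $x$, so $y\in\overline B_s(x)\subset\Om_s+\overline B_s\subset\overline B_{R-s}(c)+\overline B_s=\overline B_R(c)$, while $\Om_s+\overline B_s$ is open. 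Finally $B_R(c)=\Om_s+\overline B_s\subset\Om$, together with $|\Om|=V=|B_R(c)|$ and the openness of $\Om$, forces $\Om=B_R(c)$, so $\Om$ is a ball.

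The delicate point is this last paragraph: one must invoke the sharp equality case of the Brunn--Minkowski inequality, which is comfortable here because one of the two sets is a genuine Euclidean ball and so the other is forced to be a ball up to null sets, and one then has to promote that ``up to null sets'' statement to the exact identity $\Om=B_R(c)$, a step that works only because $\Om$ and $\Om_s$ are open, so a modification on a null set cannot deform them. Everything else (the description of $\Om_s$, the plain Brunn--Minkowski bound, and the computation for the ball) is routine.
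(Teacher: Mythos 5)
Your proof is correct and follows essentially the same route as the paper: the inclusion $\Om_s+\overline B_s\subset\Om$ combined with the Brunn--Minkowski inequality and its equality case, plus a direct volume comparison for part (ii). You supply more detail than the paper does on promoting the ``ball up to a null set'' output of the Brunn--Minkowski equality case to the exact identity $\Om=B_R(c)$ via openness; this is a welcome precision rather than a different argument.
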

		
		\noindent
		Let $V>0$ and $s>0$ and let $\Om\subset \R^d$ be an open set. We assume without loss of generality that $|\Om| = V$ and $|\Om_s| > 0$. Notice that we always have $\Om_s + B_s \subset \Om$ (but the converse inclusion may fail). By the Brunn-Minkowski inequality (see for instance~\cite{gardnerbrunnmink}) applied to $\Om_s$ and $B_s$, there holds
		\begin{equation}\label{brunnmink}
			V^{1/d}=|\Om|^{1/d}  \geq |\Om_s + B_s|^{1/d} \geq |\Om_s|^{1/d} + |B_s|^{1/d}.
		\end{equation}
		
		\noindent
		If $\Om_s$ is a ball, then $\Om$ is a ball of volume $V$, $\Om = \Om_s + B_s$ and we have equality in~\eqref{brunnmink}. Conversely if we have equality in~\eqref{brunnmink}, by the equality case of the Brunn-Minkowski inequality and the fact that $s>0$, $\Om_s$ is a ball and $|\Om| = |\Om_s +B_s|$, so that $\Om$ is a ball. This proves the first part of the claim.
		
		Regarding the second part, we assume that $|\Om|\le\om_d s^d$ and (by contradiction) that $|\Om_s|>0$. The above reasoning applies and we have $\Om=\Om_s+B_s$ so that $|\Om_s|>0$ implies $|\Om|>|B_s|=\om_d s^d$ and we get a contradiction. This proves the claim.
		
		\medskip
		
		By definition, $\{-\psi > t-k(r)\}$ and $\{-\psi_* > t-k(r)\}$ have the same volume. As a consequence of the claim, for any $t > 0$ and $r > 0$,
		\begin{equation}\label{inegrearr}
			|\{- \psi > t - k(r)\}_r | \leq | \{- \psi_* > t - k(r) \}_r|.
		\end{equation}
		
		\noindent
		Notice that the previous inequality is an equality if $r=0$, as $\Om_0 = \Om$ for any open set $\Om$. Taking the infimum on $r \geq 0$ yields
		\begin{equation}\label{ineqvol}
			|\{\psi^c> t\} |\le \inf_{r \geq 0}  |\{-\psi > t-k(r)\}_r| \leq \inf_{r\geq 0}  |\{-\psi_* > t-k(r)\}_r| = |\{\psi_*^{\,c} > t\} |.
		\end{equation}
		
		\noindent
		This proves~\eqref{rearr} which in turn implies~\eqref{psistarccstar}. 
		\medskip
		
		\begin{center}
			\textit{Part II : Unit balls are the unique maximisers of $\E(\om_d)$.}
		\end{center}
		
		\medskip
		
		\noindent
		\textit{Step 1. Proof of $f=\chi_{\{\psi^c > \psi_*^{\,c}(1)\}}$ (exploiting the equality case in the bathtub principle).}
		
		We now show that any maximiser $f$ is of the form $\chi_{\{\psi^c > \psi_*^{\,c}(1)\}}$. By Lemma~\ref{lem_monotpotball}, $\psi_*^{\,c}$ is radially decreasing on $B_{1}$ and non-increasing on $\R^d$. Thus $\chi_{B_{1}}$ is the only function maximising
		\[
		\sup_{\widetilde{f}} \lt\{\int \widetilde{f} \psi_*^{\,c} \,: 0 \leq \widetilde{f} \leq 1, \, \int \widetilde{f}\,= \om_d \rt\}.
		\]
		
		\noindent
		As $\Ups^*(\chi_{B_{1}}) = \E(\om_d)$, the inequalities in~\eqref{ineqHL}  and~\eqref{lefttoprove} are in fact equalities (and~\eqref{psistarccstar} is also an equality in $B_1$). Namely, there hold 
		\[
		(\psi^c)^* = \psi_*^{\,c} \quad \textrm{ in } \quad B_1,\ \qquad -\int f \psi \,= - \int f^* \psi_*\qquad\text{ and }\qquad \int f \psi^c\, =\int f^* \psi_*^{\,c}\,.
		\]
		This leads to
		\begin{equation*}
			\int f \psi^c \,= \int f^* (\psi^c)^* \,= \int f^* \psi_*^{\,c} \,= \int_{B_{1}}\psi_*^{\,c}\,,
		\end{equation*}
		
		\noindent
		and $f$ is a maximiser of
		\begin{equation*}
			\sup_{\widetilde{f}}\lt\{ \int \widetilde{f} \psi^c\, : 0 \leq \widetilde{f} \leq 1, \, \int \widetilde{f}\,= \om_d\rt\}.
		\end{equation*}
		Let us now prove that $|\{\psi^c>\psi_*^{\,c}(1)\}|=\om_d$ (which with Proposition~\ref{prop_bathtub} yields $f=\chi_{\{\psi^c > \psi_*^{\,c}(1)\}}$). 		 Since $(\psi^c)^*=\psi_*^{\,c}$ in $B_1$, and $\psi_*^{\,c}$ is decreasing in $B_1$ by Lemma~\ref{lem_monotpotball}, there holds for $t\ge \psi^c_*(1)$,
		\begin{equation}\label{equalvol}
			|\{\psi^c >t \}| =|\{(\psi^c)^* >t \}|= |\{\psi_*^{\,c} >t \}|.
		\end{equation}
		Using this for $t=\psi_*^{\,c}(1)$ we get $|\{\psi^c>\psi_*^{\,c}(1)\}|=\om_d$ and we conclude with Proposition~\ref{prop_bathtub} that $f=\chi_{\{\psi^c > \psi_*^{\,c}(1)\}}$.
		
		\medskip
		
		\noindent
		\textit{Step 2. We prove that $\{\psi^c >t \}$ is a ball for $t>\psi^c_*(1)$ (exploiting the equality case in the Brunn-Minkowski inequality).}\\
		\textit{Step 2.a.}
		
		We fix $t> \psi^c_*(1)$.
		Combining~\eqref{equalvol}  and~\eqref{ineqvol}, we get that
		\begin{equation}\label{egrearr}
			|\{\psi^c > t \}| = \inf_{r \geq 0}  |\{-\psi > t-k(r)\}_r| = \inf_{r \geq 0}  |\{-\psi_* > t-k(r)\}_r|= |\{\psi_*^{\,c} > t \}|.
		\end{equation}
		
		\noindent
		The following claim is established in Step 2.b below. 
		\begin{claim*} There exists $r_*=r_*(t)>0$ such that 
			\[
			|\{\psi_*^{\,c} > t \}| = \inf_{r \geq 0}  |\{-\psi_* > t-k(r)\}_r| =  |\{-\psi_*> t-k(r_*)\}_{r_*}|.
			\]
		\end{claim*}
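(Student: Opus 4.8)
The plan is to construct $r_*$ explicitly, leaning on what has already been proved about the minimisers of $\Ups(\chi_{B_1})$. Abbreviate $g(r):=|\{-\psi_*>t-k(r)\}_r|$; by~\eqref{egrearr} one has $\inf_{r\ge0}g(r)=|\{\psi_*^c>t\}|$, so the claim is exactly that this infimum is a minimum attained at some positive $r_*$. I would first dispatch attainment: since $-\psi_*$ is continuous, non-negative, radially non-increasing and compactly supported, $\{-\psi_*>t-k(r)\}$ is an open ball, with infinite radius exactly when $t-k(r)<0$, i.e.\ for $r>r_t:=k^{-1}(t)$ (a positive number, $t>\psi_*^c(1)\ge0$ and $k$ being increasing and coercive); hence the infimum runs over the compact interval $[0,r_t]$, on which the radius $\rho(r)$ of $\{-\psi_*>t-k(r)\}$ is non-decreasing and left-continuous (as $r_n\uparrow r$ the sets $\{-\psi_*>t-k(r_n)\}$ increase to $\{-\psi_*>t-k(r)\}$), so that $g(r)=\om_d((\rho(r)-r)_+)^d$ is lower semi-continuous and attains a minimum at some $\bar r\in[0,r_t]$.

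If $\bar r>0$, we are done. Suppose $\bar r=0$; then $|\{-\psi_*>t\}|=g(0)=\inf_{r\ge0}g=|\{\psi_*^c>t\}|$, and since $\psi_*^c\le-\psi_*$ while both sets are balls centred at $0$, necessarily $\{\psi_*^c>t\}=\{-\psi_*>t\}$. If this ball is trivial then $t\ge\max(-\psi_*)$, hence $\psi^c\le-\psi\le t$ and $\{\psi^c>t\}$ is negligible, so the conclusion of Step~2 is vacuous for such $t$; otherwise, using that $\psi_*^c$ is radially non-increasing and \emph{decreasing} on $B_1$ (Lemma~\ref{lem_monotpotball}) and that $t>\psi_*^c(1)$, the ball is $B_\tau$ with $\tau\in(0,1)$ and $\psi_*^c(\tau\sigma)=t$.

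In this remaining case I would produce $r_*$ as follows. Take a minimiser $\gamma$ of $\Ups(\chi_{B_1})$; by Lemma~\ref{lem_monotpotball} (and its proof) $\gamma_y=\chi_A$ with $A=B_{2^{1/d}}\setminus B_1$, and $(\psi_*^c,\psi_*^{cc})$ is a pair of Kantorovitch potentials for the transport $\chi_{B_1}\to\chi_A$; moreover $\psi_*^{cc}\le0$ on $\overline A$ (as $\{\psi_*^{cc}>0\}\subset\{\gamma_y=0\}=\R^d\setminus A$) and $\psi_*=(\psi_*^{cc})_-$ outside $B_1$, so $\psi_*^{cc}=\psi_*$ on $\overline A$. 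Since $\supp\gamma$ is compact and projects onto $\overline{B_1}\ni\tau\sigma$, pick $y_0$ with $(\tau\sigma,y_0)\in\supp\gamma$; then $y_0\in\supp\gamma_y\subset\overline A$, so $|y_0|\ge1>\tau$. Using the Kantorovitch equality, then $\psi_*^{cc}(y_0)=\psi_*(y_0)$, the reverse triangle inequality with $k$ increasing, the radiality of $\psi_*$, and~\eqref{altinfdef_bis} applied to $\psi_*$, one gets
\[
t=\psi_*^c(\tau\sigma)=k(|y_0-\tau\sigma|)-\psi_*(y_0)\ge k(|y_0|-\tau)-\psi_*(|y_0|\sigma)\ge\min_{s\ge\tau}\bigl(k(s-\tau)-\psi_*(s\sigma)\bigr)=t ,
\]
forcing equalities throughout; in particular $s_\tau:=|y_0|\ge1$ realises the minimum, i.e.\ $k(s_\tau-\tau)-\psi_*(s_\tau\sigma)=t$. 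Setting $r_*:=s_\tau-\tau>0$, this reads $t-k(r_*)=-\psi_*(s_\tau\sigma)$, so $\{-\psi_*>t-k(r_*)\}=\{-\psi_*>-\psi_*(s_\tau\sigma)\}$ is an open ball of radius at most $s_\tau$, whence
\[
|\{-\psi_*>t-k(r_*)\}_{r_*}|\le\om_d(s_\tau-r_*)^d=\om_d\tau^d=|\{\psi_*^c>t\}|\st{\eqref{egrearr}}{=}\inf_{r\ge0}|\{-\psi_*>t-k(r)\}_r| ,
\]
and the reverse inequality being trivial, $r_*$ realises the infimum, which is the claim.

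The main obstacle is the displayed chain of (in)equalities — recognising the point realising the minimum in the definition of $\psi_*^c(\tau\sigma)$ as the image $y_0$ of $\tau\sigma$ under an optimal plan, and certifying that $|y_0|>\tau$. This is exactly the spot where we must invoke the facts, established earlier, that the second marginal of minimisers of $\Ups(\chi_{B_1})$ is the annulus $A$ and that $\psi_*^{cc}$ coincides with $\psi_*$ on $\overline A$.
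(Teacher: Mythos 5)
Your argument is correct in substance and reaches the claim by a mildly different route, but it hinges on the same mechanism as ours. We first restrict the infimum to a compact interval $[r_t,R_t]$ bounded away from $0$ (with $r_t=\tfrac{1-R_*(t)}{2}$), which is the delicate step, proved by showing $\{\psi_*^{\,c}>t\}=\cap_{r\ge r_t}S_r$ via the optimal plan for $\Ups(\chi_{B_1})$; lower semi-continuity then gives a minimiser $r_*\ge r_t>0$. You instead prove attainment on $[0,k^{-1}(t)]$ first (your monotonicity and left-continuity argument for the radius $\rho(r)$, hence lower semi-continuity of $r\mapsto\om_d((\rho(r)-r)_+)^d$, is fine) and then repair the possibly degenerate minimiser $\bar r=0$ by exhibiting the explicit competitor $r_*=|y_0|-\tau\ge1-\tau>0$ coming from a point $(\tau\sigma,y_0)\in\supp\gamma$. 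The chain of equalities you force is exactly the duality computation we use to show $x\in S_{r'}$ with $r'=|z-x|\ge1-\overline R$: both proofs extract the same information from the same source, namely that the second marginal of the optimal plan is the annulus $A$, so the optimal target of a point of $B_1$ lies at distance at least $1-\tau$ from it. Your version yields an explicit minimiser; ours yields a uniform lower bound on the admissible radii.

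Two points should be tightened. First, the identity ``$\psi_*=(\psi_*^{cc})_-$ outside $B_1$'' is not free: $\psi_*$ is only known to be an \emph{optimal} potential, not a priori of the form $\vhi^c_{\,-}$ for some $\vhi\in\Phi'$, which is the hypothesis under which Proposition~\ref{prop_Ups_OT} and \eqref{psi_KP} are stated. It does follow in one line: $(\psi_*^{\,c},(\psi_*^{cc})_-)$ is admissible for $\Ups^*(\chi_{B_1})$ and $(\psi_*^{cc})_-\ge\psi_*$, so optimality of $(\psi_*^{\,c},\psi_*)$ forces $\int_{B_1^c}\bigl((\psi_*^{cc})_--\psi_*\bigr)\,=0$, hence equality on $B_1^c$ by continuity; combined with your observation that $\psi_*^{cc}\le0$ on $\overline A$ this gives $\psi_*^{cc}=\psi_*$ there. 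Second, when $|\{\psi_*^{\,c}>t\}|=0$ you verify that the \emph{application} of the claim is vacuous rather than the claim itself; this is acceptable (we make the same reduction in Step~2.a), but it should be stated explicitly that the claim is only invoked for those $t$ with $\{\psi_*^{\,c}>t\}\ne\emptyset$.
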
 
		Provisionally assuming the claim let us prove that $\{\psi^c >t \}$ is a ball.\\  
		We assume without loss of generality that $|\{\psi_*^{\,c}>t\}|>0$ (otherwise $|\{\psi^c >t \}|\le |\{\psi_*^{\,c} >t \}|=0$ by~\eqref{inegrearr} and the open set $\{\psi^c >t \}$ is empty). Next, the claim,~\eqref{egrearr} and~\eqref{inegrearr} yield that $r_*(t)$ also minimises $\inf_{r \geq 0}  |\{-\psi > t-k(r)\}_r|$.  Thus  by~\eqref{egrearr}, $\{-\psi_* > t-k(r_*(t))\}_{r_*(t)}$ is a ball of positive volume. As $r_*(t)>0$, by the equality case of the claim of Part I, Step 2, the set $\{-\psi > t-k(r_*(t))\}_{r_*(t)}$ is also a ball. As $\{\psi^c > t \} \subset \{-\psi > t-k(r_*(t))\}_{r_*(t)}$, by~\eqref{egrearr} the inclusion is actually an equality. Hence $\{\psi^c >t \}$ is a ball.
		
		\medskip
		
		\noindent
		\textit{Step 2.b. Proof of the claim.}
		
		We first show that there exist $0<r_t<R_t<\infty$ such that
		\begin{equation}\label{reducr}
			\inf_{r \geq 0}  |\{-\psi_* > t-k(r)\}_r| = \inf_{r_t \leq r \leq R_t } |\{-\psi_* > t-k(r)\}_r|.
		\end{equation}
		We start with the upper bound on $r$. By~(H\ref{cont})\&(H\ref{monot}), there exists $R_t$ such that $k(R_t) = t+1$. Hence, if $r > R_t$, $\{-\psi_* >t - k(r) \} = \R^d$. We can thus only  consider the radii $r\le R_t$.
		\medskip		
		
		\noindent
		We now prove the lower bound on $r$. Recall that $t> \psi_*^{\,c}(1)$ and that $\psi_*^{\,c}$ is decreasing in $B_1$. Therefore there exists $R_*(t)<1$ such that
		\[
		\{\psi_*^{\,c}>t\}=B_{R_*(t)}.
		\]
		We set $r_t:= \frac{1-R_*(t)}{2}$ and claim that~\eqref{reducr} holds for this value. To ease notation, let us set  for $r>0$
		\[
		S_r:=\{-\psi_* > t-k(r)\}_r=\{x\in \R^d :-\psi_* > t-k(r) \text{ on } \overline{B}_r(x)\}.
		\]
		We also define $\overline{R}:=\frac{1+R_*(t)}{2}$.
		In order to prove~\eqref{reducr} it is enough to show that
		\begin{equation}\label{psiSr}
			\{\psi_*^{\,c}>t\}=\cap_{r\ge r_t} S_r.
		\end{equation}
		Recalling that the sets $S_r$ are centred balls and that $B_{R_*(t)}\subset B_{\overline{R}}$, we have 
		\[
		\{\psi_*^{\,c}>t\}=\cap_{r\ge 0} (S_r\cap B_{\overline{R}}).
		\]
		We now claim that
		\[
		\cap_{ r\ge r_t} (S_r \cap B_{\overline{R}}) \subset \cap_{r<r_t} (S_r \cap B_{\overline{R}}),
		\]
		which is equivalent to
		\begin{equation}\label{cupcup}
			\cup_{r<r_t} (S_r^c\cap B_{\overline{R}})\subset \cup_{ r\ge r_t} (S^c_r \cap B_{\overline{R}}).
		\end{equation}
		To prove this let $x\in  S_r^c\cap  B_{\overline{R}}$ for some $r<r_t$. By definition of $S_r^c$,
		\[
		\min_{y\in \overline{B}_r(x)} k(r)-\psi_*(y)\le t.
		\]
		In particular since $k$ is increasing, there exists $y\in \overline{B}_r(x)$ such that
		\[
		k(|x-y|)-\psi^*(y)\le t.
		\]
		As $x\in B_{\overline{R}}\subset B_1$, and $(\psi_*^{\,c},\psi_*^{cc})$ are Kantorovitch potentials for the external transport minimising $\Ups(B_1)$ (see Proposition~\ref{prop_Ups_OT}) there exists $z\in B_1^c$ such that $\psi_*^{cc}(z)=\psi_*(z)$ (by~\eqref{psi_KP}) and
		\[
		\psi_*^{\,c}(x)=k(|x-z|)-\psi_*(z)=\min_y k(|x-y|)-\psi_*(y)\le t.
		\]
		Since $z\in B_1^c$ and $x\in B_{\overline{R}}$ we have
		\[r'=|z-x|\ge 1-\overline{R}=\frac{1-R_*(t)}{2}=r_t\]
		and thus
		\[
		\min_{z\in \overline{B}_{r'}(x)} k(r')-\psi_*(z)\le t
		\]
		so that $x\in S_{r'}$.
		This shows~\eqref{cupcup} which implies
		\[
		\{\psi_*^{\,c}>t\}=\cap_{ r\ge r_t} (S_r \cap B_{\overline{R}}).
		\]
		Eventually, we must have $S_r \subset B_{\overline{R}}$ for some $r\ge r_t$  (otherwise $\{\psi_*^{\,c}>t\}=B_{\overline{R}}$ which is absurd). This concludes the proof of~\eqref{psiSr} and thus of~\eqref{reducr}. 
		\medskip
		
		\noindent
		Next, setting 
		\[
		L(r):=|\{-\psi_* > t-k(r)\}_r|,
		\]
		we still have to establish that the infimum of $L$ over $[r_t,R_t]$ is reached. For this we establish that $L$ is lower semi-continuous (together with~\eqref{reducr} this will conclude the proof of the existence of $r_*(t)>0$ minimising $L$ over $\R_+$). We start by noticing that, $r \mapsto |\{-\psi_* > r \}|$ is lower semi-continuous on $\R_+$. Let us denote by $\rho_t(r)$ the radius of the ball $\{-\psi_* > t - k(r) \}$. As $k$ is continuous, the function $r \mapsto \rho_t(r)$ is also lower semi-continuous. Finally, as $L(r) = \om_d[(\rho_t(r) - r)_+]^d$, $L$ is lower semi-continuous as well. This ends the proof of the claim.
		
		\medskip
		
		\noindent
		\textit{Step 3. Conclusion.}
		
		Let now $t_n$ be a decreasing sequence converging to $\psi_*^{\,c}(1)$. We have
		\begin{equation}\label{sequenceballpsi}
			\{\psi^c >\psi_*^{\,c}(1)\} = \bigcup_{n \geq 0} \{\psi^c > t_n \} \qquad \text{and} \qquad \{\psi_*^{\,c} > \psi_*^{\,c}(1)\} = \bigcup_{n \geq 0} \{\psi_*^{\,c} > t_n \} = B_{1}.
		\end{equation}
		
		\noindent
		By~\eqref{egrearr}, for every $n \geq 0$, $\{\psi^c > t_n \} = B_{r_n}(z_n) $, where $r_n$ is the radius of $\{\psi_*^{\,c} > t_n\}$ and $z_n \in \R^d$.  Since $t_n$ is decreasing the sequence $B_{r_n}(z_n)$ is non-decreasing. Moreover, by~\eqref{sequenceballpsi}  $r_n \to 1$ as $n \to \infty$. Hence there exists $z \in \R^d$ such that $\chi_{B_{r_n}} \to \chi_{B_1}(z)$ monotonically in $L^1(\R^d)$ as $n \to \infty$. Eventually~\eqref{sequenceballpsi} implies that $\{\psi^c >\psi_*^{\,c}(1) \} = B_{1}(z)$. Consequently, $f = \chi_{\{\psi^c > \psi_*^{\,c}(1)\}} = \chi_{B_{1}(z)}$. This concludes the proof of the fact that balls are the unique maximisers to~\eqref{MP}.
	\end{proof}

	\section*{Statements and Declarations}
	\noindent\textbf{Data Availability.} There is no data attached to this paper\\
	\textbf{Ethics approval.} We approve the ethics.\\
	\textbf{Conflict of interest.} The authors have no competing interests to declare that are relevant to the content of this article.\\
\textbf{Funding.} No funds, grants, or other support was received.
	\bibliographystyle{acm}
	\bibliography{bib_can_gol_mer.bib}
\end{document}